\font\mfett=cmmib10 at11pt
\def\mubf{\hbox{\mfett\char022}} 
\def\alphabf{\hbox{\mfett\char011}} 
\def\betabf{\hbox{\mfett\char012}} 
\def\sumprime_#1^#2{
    \setbox0=\hbox{$\scriptstyle{#1}$}
    \setbox1=\hbox{$\scriptstyle{#2}$}
    \setbox2=\hbox{$\displaystyle{\sum}$}
    \setbox4=\hbox{${}^\prime\mathsurround=0pt$}
    \dimen0=.5\wd0 \advance\dimen0 by-.5\wd2
    \ifdim\dimen0>0pt
        \ifdim\dimen0>\wd4 \kern\wd4
        \else\kern\dimen0
        \ifdim\dimen1>\wd4 \kern\wd4
        \else\kern\dimen1
    \fi\fi\fi
\mathop{{\sum}^\prime}_{\kern-\wd4 #1}^{\kern-\wd4 #2}
}
\newcounter{thm}
\numberwithin{thm}{section}
\numberwithin{equation}{section}
	\newtheoremstyle{myplain}		
			{}			
			{}			
			{\itshape}				
			{}				
			{\sffamily\bfseries}				
			{.}		
			{ }				
			{\thmname{#1}\thmnumber{ #2}\textnormal{\textsf{\thmnote{ (#3)}}}}			
	\newtheoremstyle{mybreak}
            {}{}{}{}{\sffamily\bfseries}{.}{\newline}
            {\thmname{#1}\thmnumber{ #2}\textnormal{\textsf{\thmnote{ (#3)}}}}
	\newtheoremstyle{mydef}
			{}{}{}{}{\sffamily\bfseries}{.}{ }
			{\thmname{#1}\thmnumber{ #2}}
	\newtheoremstyle{myrem}
			{}{}{}{}{\sffamily\itshape}{.}{ }
			{\thmname{#1}\thmnumber{ #2}}
\theoremstyle{myplain}
	\newtheorem{theorem}[thm]{Theorem}
		\crefname{theorem}{Theorem}{Theorems}
    \newtheorem{proposition}[thm]{Proposition}
        \crefname{proposition}{Proposition}{Propositions}
        \Crefname{proposition}{Proposition}{Propositions}
	\newtheorem{lemma}[thm]{Lemma}
		\crefname{lemma}{Lemma}{Lemmas}
		\Crefname{lemma}{Lemma}{Lemmas}
    \newtheorem{corollary}[thm]{Corollary}
\theoremstyle{mybreak}
    \newtheorem{algorithm}[thm]{Algorithm}
\theoremstyle{mydef}
    \newtheorem{remark}[thm]{Remark}
\theoremstyle{myrem}
    \newtheorem{example}[thm]{Example}
\DeclareMathOperator{\rank}{rank}
\DeclareMathOperator{\diag}{diag}
\DeclareMathOperator{\vect}{vec}
\DeclarePairedDelimiter{\abs}{\lvert}{\rvert}
\title{\Large Optimal Rank-1 Hankel Approximation of Matrices:\\ Frobenius Norm, Spectral Norm, and Cadzow's Algorithm}
\author{Hanna Knirsch\footnote{Institute for Numerical and Applied Mathematics,n G\"ottingen University, Lotzestr.\ 16-18, 37083 G\"ottingen, Germany, \{h.knirsch,m.petz,plonka\}@math.uni-goettingen.de}, Markus Petz$^{*}$, and Gerlind Plonka$^{*}$ \footnote{Corresponding author}}
\date{}
\begin{document}
	\let\oldproofname=\proofname
	\renewcommand{\proofname}{\itshape\sffamily{\oldproofname}}

\maketitle

\begin{abstract}
\textbf{Abstract.}
We  characterize optimal rank-1 matrix approximations with Hankel or Toeplitz structure with regard to two different norms, the Frobenius norm and the spectral norm,  in a new way.  More precisely, we show that these rank-1 matrix approximation problems can be solved by maximizing special rational functions.
Our approach enables us to show that the optimal solutions with respect to these two norms  have completely different structure and only coincide in the trivial case when the singular value decomposition already provides an optimal rank-1 approximation with the desired Hankel or Toeplitz structure.
  We also prove that the Cadzow algorithm for structured low-rank approximations always converges to a fixed point in the rank-1 case. However, it usually does not converge to the optimal solution, neither with regard to the Frobenius norm nor the spectral norm. \\
\textbf{Keywords:} Optimal structured low-rank approximation, Frobenius norm, spectral norm, rank-1 Hankel and Toeplitz matrices, Cadzow algorithm.\\
\textbf{AMS classification:}  15A18, 15B05, 65K10, 93B11.
\end{abstract}

\section{Introduction}
\setcounter{equation}{0}
Structured low-rank approximations are widely used in many signal processing problems as in system theory, parameter identification and signal analysis, e.g.\ singular spectral analysis (SSA) \cite{Golyandina10}. 
Applications include minimal partial realizations in linear system theory, multi-input-multi-output systems, system identification problems or approximation with finite rate of innovation  signals \cite{Fazel13, Liu09, Vetterli.2002}.
Low-rank Hankel approximation is  closely related to Prony's method \cite{PT14},  or related modifications \cite{BM86, Osborne, ZP19}.

Generally, a low-rank Hankel approximation  problem can be written  as a non-convex optimization problem. For a given matrix ${\mathbf A} \in {\mathbb C}^{M \times N}$  one wants to find a Hankel matrix 
$\mathbf{H}_r$ of rank at most $r < \min\{M, N\}$, such that
\begin{equation}\label{prob1}
    \mathbf{H}_{r} \coloneqq \mathop{\mathrm{argmin}}_{\substack{{\mathbf H} \, \text{Hankel}\\ \rank {\mathbf H} \le r}} \| {\mathbf A} - {\mathbf H}\|, 
\end{equation}
where the considered matrix norm is usually taken to be a (weighted) Frobenius norm.

\paragraph{Notation.}
To state the problem precisely, we start with some  notations.
For a given matrix ${\mathbf A}= (a_{j,k})_{j,k=0}^{M-1,N-1} \in {\mathbb C}^{M \times N}$ we define the Frobenius norm and the spectral norm of ${\mathbf A}$ as 
\begin{equation*}
    \| {\mathbf A} \|_{F} \coloneqq \left(\sum_{j=0}^{M-1} \sum_{k=0}^{N-1} |a_{j,k}|^{2} \right)^{1/2} = \text{trace}({\mathbf A}^* \mathbf{A})^{1/2},
    \quad
    \| {\mathbf A} \|_{2} \coloneqq \max_{\|{\mathbf x} \|_{2} = 1} \| {\mathbf A} {\mathbf x} \|_{2} =  \rho({\mathbf A}^{*}  {\mathbf A})^{1/2},
\end{equation*}
where $\| {\mathbf x}\|_{2} \coloneqq (\sum_{j=0}^{N-1} |x_{j}|^{2})^{1/2}$ denotes the Euclidean vector norm. 
Let 
\begin{equation}\label{norm1}
\|{\mathbf A}\|_{\infty} \coloneqq \max\limits_{j,k} |a_{j,k} | \qquad  \textrm{and}  \qquad \|{\mathbf x}\|_{\infty} \coloneqq \max\limits_{j} |x_{j}|.
\end{equation}
Further, ${\mathbf A}^{*} \coloneqq \overline{\mathbf A}^{T}$ and ${\mathbf x}^{*} \coloneqq \overline{\mathbf x}^{T}$ denote the complex conjugate and transpose of a matrix or vector, respectively. By $\rho({\mathbf A}^{*}  {\mathbf A})$ we denote the spectral radius of the positive semi-definite matrix ${\mathbf A}^{*}  {\mathbf A}$, i.e., the largest eigenvalue of ${\mathbf A}^{*}  {\mathbf A}$. 
The singular value decomposition of ${\mathbf A}$ is given by 
$\mathbf A = \mathbf{U\Sigma V}^*$, where ${\mathbf U} \in {\mathbb C}^{M\times M}$ and ${\mathbf V} \in {\mathbb C}^{N \times M}$ satisfy ${\mathbf U}^{*} {\mathbf U} = {\mathbf V}^{*} {\mathbf V} = {\mathbf I}_{M}$ (with the $M \times M$-identity matrix ${\mathbf I}_{M}$), and $\mathbf{\Sigma} \coloneqq \textrm{diag}(\sigma_{0}, \ldots , \sigma_{M-1}) \in {\mathbb R}^{M \times M}$ with the ordered singular values $\sigma_0 \geq \sigma_1 \geq \dots \geq \sigma_{M-1}$. If $\mathbf A \in {\mathbb R}^{N \times N}$ admits an eigendecomposition, we write $\mathbf A = \mathbf{V\Lambda V}^T$, where ${\mathbf V} \in {\mathbb R}^{N \times N}$ is orthogonal, $\mathbf\Lambda = \diag(\lambda_0,\dots,\lambda_{M-1})$, and the eigenvalues $|\lambda_0| \geq |\lambda_1| \geq \dots \geq |\lambda_{N-1}|$ are ordered by modulus.

Hankel matrices are of the form 
\begin{equation}\label{hankel}
    {\mathbf H} \coloneqq (h_{k+\ell})_{k,\ell=0}^{M-1, N-1} = 
    \begin{pmatrix}
        h_0 & h_1 & h_2 & \cdots & h_{N-1} \\
        h_1 & h_2 &  &  & h_{N} \\
        h_2 &     &       &  & \vdots \\
        \vdots & & & & \vdots \\
        h_{M-1} & h_{M} & h_{M+1} & \cdots & h_{M+N-2}
    \end{pmatrix} \in \mathbb{C} ^{M \times N}.
\end{equation}
We denote the orthogonal projection of a general matrix $\mathbf A \in {\mathbb C}^{M \times N}$ with $M \le N$, onto the linear space of Hankel structured matrices by $P(\mathbf A)$. It is obtained by averaging the matrix elements along counter diagonals, i.e.,
\begin{align}
    \label{eq:H-proj-1}
    P({\mathbf A}) &\coloneqq (h_{k+\ell})_{k, \ell=0}^{M-1, N-1} \in {\mathbb C}^{M \times N} \\
\shortintertext{with}
    \label{eq:H-proj-2}
    h_\ell &\coloneqq \begin{cases}
                        \frac{1}{\ell+1} \sum\limits_{r=0}^{\ell} a_{r,\ell-r}  & \text{for} \quad \ell=0, \ldots , M-1, \\
                        \frac{1}{M} \sum\limits_{r=0}^{M-1} a_{r,\ell-r}  & \text{for} \quad \ell=M, \ldots, N-1,\\
                        \frac{1}{M+N-1-\ell} \sum\limits_{r=\ell+1-N}^{M-1} a_{r,\ell-r}  & \text{for} \quad \ell=N, \ldots , N+M-2.
                     \end{cases} 
\end{align}
For $M > N$ we take $P({\mathbf A}) \coloneqq P({\mathbf A}^{T})^{T}$, see also \cref{sec:cad}.

Let ${\mathbf I}_{N} \in {\mathbb R}^{N \times N}$ denote the identity matrix, and the  counteridentity matrix is given by 
\begin{equation}\label{J}
    {\mathbf J}_{N} \coloneqq \begin{pmatrix}
                                0 & \ldots & 0 & 1\\
                                \vdots &  & 1 & 0 \\
                                0 &  & & \vdots \\
                                1 & 0 & \ldots & 0
                            \end{pmatrix} \in {\mathbb C}^{N \times N}.
\end{equation}
We introduce the normalized structured vector
\begin{equation}\label{z}
    {\mathbf z}_{N}(z) = {\mathbf z}_{N} \coloneqq \left(\sum\limits_{k=0}^{N-1} |z|^{2k}\right)^{-1/2}  \left( z^{k} \right)_{k=0}^{N-1}
    = \left(\sum\limits_{k=0}^{N-1} |z|^{2k}\right)^{-1/2} \left( 1, z, z^{2}, \ldots, z^{N-1} \right)^{T},
\end{equation}
for any number $z\in {\mathbb C}$ and $N\in\mathbb N$.
We use the convention that ${\mathbf z}_{N}(z)$ is abbreviated by ${\mathbf z}_{N}$ or just by ${\mathbf z}$, if the dimension and the argument $z$ is clear from the context.
Furthermore, let
\begin{equation}\label{en}
    \tilde{\mathbf e}_{N} \coloneqq (1, 0 \ldots ,  0)^{T} \in {\mathbb C}^{N}
    \qquad \text{and} \qquad 
    {\mathbf e}_{N} \coloneqq (0, \ldots , 0, 1)^{T} \in {\mathbb C}^{N}
\end{equation}
be the first and last vector of the standard basis, respectively.
Finally, note that in \cref{sec:spec} we will use the notation $\sumprime_{}^{}$ instead of $\sum$ if terms of the form $\frac{0}{0}$ appear in this sum, and in $\sumprime_{}^{}$ such terms are just omitted.

\paragraph{Statement of the problem.}
In this paper, we are  interested in optimal approximations of a given matrix ${\mathbf A} \in \mathbb{C}^{M \times N}$ by a rank-1 Hankel matrix ${\mathbf H}_{1}$ of the same size
with regard to the Frobenius norm, i.e., we want to solve
\begin{equation}\label{problem}
	\min_{\mathbf{H}_{1}\in\mathbb{C}^{M\times N}} \lVert{\mathbf A}- {\mathbf H}_{1}\rVert_F \end{equation}
under the restriction that $\mathbf{H}_{1}$ is a Hankel matrix of rank $1$.
Further, we consider for symmetric matrices ${\mathbf A} \in {\mathbb R}^{N \times N}$ the rank-1 approximation problem with regard to the spectral norm
\begin{equation}\label{problem1}
 \min_{\mathbf{H}_{1}\in\mathbb{R}^{N\times N}} \lVert{\mathbf A}-  \mathbf{H}_{1}\rVert_{2}, 
\end{equation}
under the restriction that $\mathbf{H}_{1}$ is a Hankel matrix of rank $1$.
It is well-known that the  (unstructured) rank-1 approximation problem can be directly solved using the singular value decomposition (SVD) of ${\mathbf A}$, but this SVD-approximation usually does no longer possess the wanted Hankel structure. The minimization problems \cref{problem} and \cref{problem1} are non-convex and in particular for the spectral norm of highly nontrivial structure. 
While we will always consider Hankel matrices in this paper, we remark that the minimization problems in \cref{problem} can be rewritten using Toeplitz matrices instead of Hankel matrices. Since a Toeplitz matrix ${\mathbf T} = (h_{k-l})_{k,l=0}^{M-1,N-1} \in \mathbb{C}^{M\times N}$ can be represented as
\begin{equation*}
    {\mathbf T} = {\mathbf H} \, {\mathbf J}_{N}
\end{equation*}
with ${\mathbf H}$ in  \cref{hankel} and ${\mathbf J}_{N}$ in \cref{J}, we obtain
\begin{equation*}
	\min_{{\mathbf T}_{1}\in\mathbb{C}^{M\times N}} \lVert{\mathbf A}- {\mathbf T}_{1}\rVert^2_F = \min_{\mathbf{H}_{1}\in\mathbb{C}^{M\times N}} \lVert{\mathbf A}- {\mathbf H}_{1} {\mathbf J}_{N}\rVert^2_F = \min_{\mathbf{H}_{1}\in\mathbb{C}^{M\times N}} \lVert{\mathbf A}{\mathbf J}_{N} - {\mathbf H}_{1}\rVert^2_F,
\end{equation*}
where ${\mathbf T}_{1}$ denotes a Toeplitz matrix of rank $1$.
This transfer works likewise for the spectral norm.

\paragraph{Main results.}
In this paper, we analytically reformulate the rank-1 Hankel approximation problem such that numerical computation of the optimal solutions for \cref{problem,problem1}  becomes feasible. For the Frobenius norm, the optimal rank-1 approximation problem for matrices ${\mathbf A} \in {\mathbb C}^{M \times N}$ can be restated as a maximization problem for a rational function.
The main results to solve \cref{problem} are stated in \cref{theo1,theoneu}.
These results can be simply transferred to a weighted Frobenius norm, see \cref{rem3.2}.
 For the spectral norm, the problem is much more delicate and of different nature. Therefore, we have to restrict ourselves to symmetric matrices ${\mathbf A} \in {\mathbb R}^{N \times N}$. In this case, we can characterize the optimal rank-1 Hankel approximation of ${\mathbf A}$ as a maximization problem for a rational function that also depends on the optimal approximation error.
Our main result to solve \cref{problem1} is \cref{theospectral1}.
Our results give rise to corresponding algorithms to derive the optimal rank-1 Hankel approximations numerically. The obtained characterizations of optimal solutions for \cref{problem,problem1} can therefore serve as benchmarks for comparison of other optimization  approaches for low-rank Hankel approximation.
Because of the completely different structure of the optimal solutions in the two considered norms, we usually get different optimal rank-1 approximations.
Moreover, in \cref{theo2,theospectral1}, we provide necessary and sufficient conditions on ${\mathbf A}$ ensuring that the optimal rank-1 Hankel approximation error for the solutions of \cref{problem} coincides with the error achieved by the best unstructured rank-1 approximation.\\
Further, we present a complete proof that Cadzow's algorithm always converges to a fixed point in the rank-1 Hankel approximation case. 
As far as we know, this is the first  complete convergence proof for the Cadzow algorithm in the considered special case despite partial convergence results, see e.g.\ \cite{Zvonarev17}. General results on convergence of  alternating projection algorithms are not simply applicable in the case of low-rank Hankel approximation, see \cite{Andersson13,Condat15,Lewis}.
However, (up to trivial cases) this fixed point does not coincide with the optimal solution of \cref{problem},  neither for the Frobenius nor for the spectral norm.
This result confirms previous results on the behaviour and convergence of alternating projection algorithms for the considered special case of rank-1-Hankel approximation, see e.g.\ \cite{Andersson13,deMoor93}.

\paragraph{Related approaches.}
There are different optimization approaches  in the literature  to tackle the structured low-rank approximation problem, which all focus on the (weighted) Frobenius norm.
One heuristic approach, often used in practice because of its simplicity, is Cadzow's algorithm \cite{Cadzow88,Chu03,Andersson13}, which is an alternating projection method, see also \cref{sec:cad}.\\
In case of the (weighted) Frobenius norm, problem \cref{prob1} can also be written as a non-linear eigenvalue problem, see \cite{BM86, Osborne, ZP19}, or as a non-linear structured least squares problem (NSLSP), see e.g.\ \cite{deMoor93, deMoor94, Gillard11, Ishteva14, Lemmerling00, Lemmerling01, Markovsky18, Markovsky05, UM14, Zvonarev18}. When applying the NSLSP methods one usually assumes that the initial matrix ${\mathbf A}$ itself is already structured (here Hankel or Toeplitz). This is not a limitation, though. A general matrix $\mathbf A$ can be first projected onto the subspace of structured matrices, i.e., one can employ $P({\mathbf A})$ in \cref{eq:H-proj-1} instead of ${\mathbf A}$, see also \cref{rem3.2}. \\
Some methods  are based on relaxation  of the optimization problem  using the nuclear norm \cite{Fazel13}, convex envelopes \cite{Andersson19, Grussler18} or subspace based  and hybrid methods \cite{Liu09, VanOver96}.\\
In \cite{Otta14} and \cite{Usevich2012}, a reformulation of the structured low-rank approximation problem is considered, which is restricted to real matrices and to a weighted Frobenius norm.
However, our results for solving \cref{problem} differ from the analytical characterization of the solution that can be obtained by applying the methods of \cite{Otta14} and \cite{Usevich2012} to the rank-1 Hankel approximation. 
In particular, our characterization in \cref{theo3} leads to a zero set of a polynomial with much smaller degree than the approach of \cite{Otta14}.  
A completely different idea to study the structured low-rank approximation problem arises from the AAK theory \cite{aak} for optimal low-rank approximation of Hankel operators.  The AAK theory shows  that infinite Hankel matrices (with certain decay properties of their components) can always  be approximated  by infinite Hankel matrices of lower rank with optimal error. This means, similarly as in the case of unstructured matrices, the (operator norm) error of the rank-$r$ approximation is given by the $(r+1)$-st largest singular value of the Hankel operator. These optimal infinite low-rank Hankel matrices can also be computed numerically, see \cite{Beylkin05, PP16}, and have been used to compute adaptive Fourier series with exponential decay for large function classes in \cite{PP19}.
Unfortunately, this approach cannot be directly transferred to finite matrices, \cite{Beylkin05}.\\
Regarding the structured low-rank approximation problem for finite matrices in the spectral norm  there are almost no previous results in the literature.
For regular real $(N\times N)$-matrices, the minimal distance to a singular (i.e., rank-deficient) structured matrix has been studied both for the Frobenius and the spectral norm, see \cite{Rump2003, Rump2003a}. Lemma 10.1 in \cite{Rump2003} (and similarly Lemma 4.7 in \cite{PP16} for the case of infinite Hankel matrices of finite rank) can indeed be exploited to construct an optimal rank-($N-1$) Hankel approximation for a given Hankel matrix of rank $N$, which is optimal with regard to the spectral norm. Unfortunately, this approach cannot be extended to construct  optimal Hankel approximations of lower rank.  
In \cite{Antoulas97}, Antoulas studied necessary and sufficient conditions for achieving an error of the rank-1 Hankel approximation, which is as good as for the unstructured case in the spectral norm. He also restricted his considerations to real symmetric Hankel matrices ${\mathbf A}$, compare \cref{theospectral1}.\\
Finally, we want to mention that the problem of finding extreme values of a special rational function, as derived in our \cref{theo1,theoneu}, also appears in other contexts as e.g.\ for the problem of computing the GCDs of univariate polynomials, see \cite{Cheze2011,Karmarkar1998}, and one may therefore apply similar strategies to solve this optimization problem.

\paragraph{Organization of this paper.}
In \cref{sec:hankel_mat} we recall that Hankel matrices of rank $1$ have a special structure and can be determined by two complex parameters $c$ and $z$.\\
In \cref{sec:frob} we show how the optimal rank-1 approximation with regard to the Frobenius norm can be obtained.
The main theoretical results are stated in \cref{theo1,theo2} in the complex case.
In \cref{theo2} we provide necessary and sufficient conditions on ${\mathbf A}$ ensuring that the optimal rank-1 approximation error coincides  with the error achieved for unstructured rank-1 approximation.
Further, we present a series of results that simplify the computation of the optimal rank-1 Hankel approximation in the real case.\\
In \cref{sec:spec}, we solve the rank-1 Hankel approximation problem with regard to the spectral norm for symmetric square matrices.
The result also gives rise to a corresponding algorithm. The completely different structure of optimal rank-1 Hankel approximations for the Frobenius norm and the spectral norm implies that the results usually differ.\\
\Cref{sec:cad} is devoted to Cadzow's algorithm, which is (despite a lot of existing optimization approaches) the most popular method for low-rank Hankel approximation in practice. We give a new direct proof, that the Cadzow algorithm always converges to a fixed point in the rank-1 case. However, we observe in our numerical examples that it usually does not converge to the optimal solution, neither with regard to the Frobenius norm nor the spectral norm. It may even fail completely, as \cref{Cadex} shows.

\section{Rank-1 Hankel Matrices}
\label{sec:hankel_mat}
\setcounter{equation}{0}

Our approach to find the optimal Hankel-structured rank-1 approximation of a given matrix ${\mathbf A} \in {\mathbb C}^{M\times N}$ will be based on the following canonical characterization of rank-1 Hankel matrices, which is a special case of the results in \cite{Heinig1984}, Theorem 8.1.

\begin{lemma}\label{thm:rank-1_hankel}
A complex  rank-1 matrix ${\mathbf H}_{1} \in {\mathbb C}^{M \times N}$ with $\min\{M, \, N \} \ge 2$ has Hankel structure if and only if it is of the form
\begin{equation}\label{hh}
	\mathbf{H}_1 = c \, {\mathbf z}_{M} \, {\mathbf z}_{N}^{T} 
      \qquad       	\text{or} \qquad	\mathbf{H}_1 = c \, {\mathbf e}_{M} \, {\mathbf e}_{N}^{T}  = \begin{pmatrix} {\mathbf 0} & {\mathbf  0} \\ {\mathbf 0} & c \end{pmatrix},
\end{equation}
where $c\in\mathbb{C}\setminus\{0\}$, $z\in {\mathbb C}$, and $\mathbf{z}_{N}$, $\mathbf{e}_N$ as in \cref{z,en}, respectively.
\end{lemma}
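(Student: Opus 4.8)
The plan is to prove both directions of the equivalence, with the "only if" direction being the substantive one. For the "if" direction, I would simply verify directly that matrices of the form $c\,{\mathbf z}_M {\mathbf z}_N^T$ and $c\,{\mathbf e}_M {\mathbf e}_N^T$ are indeed rank-1 and Hankel. Rank-1 is clear since both are nonzero outer products $c\,\uu\vv^T$. For the Hankel property, the $(k,\ell)$ entry of $c\,{\mathbf z}_M {\mathbf z}_N^T$ is a scalar multiple of $z^k z^\ell = z^{k+\ell}$, which depends only on $k+\ell$; and $c\,{\mathbf e}_M {\mathbf e}_N^T$ has its only nonzero entry in position $(M-1,N-1)$, so it is trivially Hankel (the single nonzero anti-diagonal is the last one).

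For the "only if" direction, let ${\mathbf H}_1$ be a rank-1 Hankel matrix, so ${\mathbf H}_1 = \uu\vv^T$ for some nonzero $\uu \in {\mathbb C}^M$, $\vv \in {\mathbb C}^N$. The Hankel condition $(\uu\vv^T)_{k,\ell} = u_k v_\ell = h_{k+\ell}$ must hold for all valid $k,\ell$. The key step is to exploit the overlap relations: for $0 \le k \le M-2$ and $0 \le \ell \le N-2$ we have $u_k v_{\ell+1} = h_{k+\ell+1} = u_{k+1} v_\ell$. I would first argue that $\uu$ cannot have an interior zero followed by a nonzero entry in a way incompatible with the structure — more precisely, I would show that either all entries of $\uu$ are nonzero, or $\uu$ is a multiple of ${\mathbf e}_M$ (and symmetrically for $\vv$). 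Suppose $u_{k_0} \ne 0$ for some $k_0 < M-1$; then from $u_{k_0} v_{\ell+1} = u_{k_0+1} v_\ell$ one gets that $v_{\ell+1}/v_\ell$ is constant wherever defined, forcing $\vv$ to be a geometric vector $v_\ell = v_0 z^\ell$ (after handling the case $v_0 = 0$, which propagates); then feeding this back gives $u_{k+1} = z\,u_k$, so $\uu$ is geometric too, and ${\mathbf H}_1 = c\,{\mathbf z}_M {\mathbf z}_N^T$ after renormalizing (absorbing the normalization constants from \cref{z} into $c$). The remaining case is $u_k = 0$ for all $k < M-1$, i.e. $\uu = u_{M-1}{\mathbf e}_M$; by the symmetric argument on $\vv$ (using that ${\mathbf H}_1 \ne 0$) one finds $\vv = v_{N-1}{\mathbf e}_N$, giving the second form with $c = u_{M-1}v_{N-1}$.

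The main obstacle is the careful bookkeeping of the degenerate cases where $\uu$ or $\vv$ has leading or trailing zeros: one must check that a zero pattern in $\uu$ forces a compatible pattern in $\vv$ via the overlap relations, and rule out "mixed" configurations (e.g. $\uu$ geometric but $\vv$ supported only at its last coordinate) that would violate either the rank-1 or the Hankel constraint. Since the statement is quoted as a special case of Theorem 8.1 in \cite{Heinig1984}, an alternative is to simply cite that result and only spell out the specialization to rank $1$ and the normalization in \cref{z}; I would include the short self-contained argument above for completeness, as it is elementary once the geometric-progression structure is extracted.
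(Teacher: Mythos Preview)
Your proposal is correct and follows essentially the same approach as the paper's proof: factor $\mathbf{H}_1 = \mathbf{x}\mathbf{y}^T$, use the Hankel relations $x_k y_\ell = x_m y_n$ for $k+\ell = m+n$ to extract geometric-progression structure on $\mathbf{x}$ and $\mathbf{y}$, and handle the degenerate boundary case separately. The paper organizes the case split slightly differently---on whether $h_0 = x_0 y_0 \neq 0$ (then set $z = x_1/x_0$ and induct) versus $h_0 = 0$ (then a whole first row or column vanishes and one propagates to $c\,\mathbf{e}_M\mathbf{e}_N^T$)---which sidesteps your intermediate claim that $\uu$ is ``either entirely nonzero or a multiple of $\mathbf{e}_M$'' (note $z=0$ gives $\uu$ proportional to $\tilde{\mathbf{e}}_M$, which is neither), but your actual case analysis on ``$u_{k_0}\neq 0$ for some $k_0 < M-1$'' is correct and equivalent.
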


\begin{proof}
We give a short proof for  the convenience of the reader.
Obviously, the two matrices $\mathbf{H}_1 = c \, {\mathbf z}_{M} \, {\mathbf z}_{N}^{T}$ and $\mathbf{H}_1 = c \, {\mathbf e}_{M} \, {\mathbf e}_{N}^{T}$ are rank-1 matrices with Hankel structure.
 
We show that each rank-1 Hankel matrix $\mathbf{H}_{1} = (h_{k+\ell})_{k,\ell=0}^{M-1,N-1}$ of rank 1 has the desired form \cref{hh}.
Since $\mathbf{H}_{1}$ has rank 1, we obtain the representation
  $ \mathbf{H}_{1}  = {\mathbf x} \, {\mathbf y}^{T}$
 for some vectors  ${\mathbf x} = (x_{0}, \ldots, x_{M-1})^{T} \in {\mathbb C}^{M}$ and ${\mathbf y}=(y_{0}, \ldots , y_{N-1})^{T} \in {\mathbb C}^{N}$. The imposed Hankel structure implies  the conditions
 \begin{equation}\label{index}
     x_k y_\ell = x_m y_n, \quad \text{for } k+\ell = m+n,
 \end{equation}
 where $k, m = 0,\dots,M-1$ and $\ell,n = 0,\dots,N-1$.
Assume first that $h_{0}= x_{0} y_{0} \neq 0$ and define $z\coloneqq x_{1}/x_{0}$.  
 It follows from \cref{index} with $k+\ell = 1$, i.e., from $x_0 y_1 = x_1 y_0$, that $z = y_1/y_0$, and thus
    $ x_1 = z x_0$ and $y_1 = z y_0$.
Using \cref{index}, it can be shown by induction that $x_{j}= z^{j} \, x_{0}$ for $j=1, \ldots , M-1$, and $y_{j}= z^{j} \, y_{0}$ for $j=1, \ldots , N-1$.
Thus, $\mathbf{H}_{1}$ has the desired structure $c \, {\mathbf z}_{M} \, {\mathbf z}_{N}^{T}$ with $z=x_{1}/x_{0}$ and $c=x_{0} \, y_{0} \| (z^{k})_{k=0}^{M-1} \|_{2} \| (z^{k})_{k=0}^{N-1} \|_{2}$.
If $h_{0}=x_{0} \, y_{0} = 0$ then either $x_{0}=0$ or $y_{0}=0$. 
Thus, either the complete first row or the complete first column of $\mathbf H_{1}= {\mathbf x} {\mathbf y}^{T}$ contains only zeros.
By obeying the Hankel structure and the rank-1 condition, we inductively obtain that 
${\mathbf H}_{1} = c \, {\mathbf e}_{M} {\mathbf e}_{N}^{T}$.
\end{proof}

\begin{remark} \label{rem:singval}
    The  matrix $\mathbf{H}_{1} = c \, {\mathbf z}_{M} \, {\mathbf z}_{N}^{T}$ possesses the non-zero singular value $|c|$
    with corresponding left and right singular vectors $\mathbf{z}_M$ and $\mathbf{\bar{z}}_N$, respectively.
\end{remark}

\begin{remark}\label{rem1}
We observe that ${\mathbf e}_{N} = \lim_{z \to \infty} {\mathbf z}_{N}$, where we understand $z\to\infty$ as $|z|\to\infty$ and $\text{Im}(z)\to 0$. Then the special case $\mathbf{H}_1 = c \, {\mathbf e}_{M} \, {\mathbf e}_{N}^{T}$ in \cref{thm:rank-1_hankel}  can also be understood as the limit case  for $z \to \infty$. A similar notation for $z= \infty$  has been also used in \cite{Heinig1984}.
If we define 
\begin{equation}\label{w}
{\mathbf w}_{N}(z)= {\mathbf w}_{N}\coloneqq {\mathbf J}_{N} {\mathbf z}_{N} \in {\mathbb C}^{N} 
\end{equation}
with  ${\mathbf z}_{N}$ in \cref{z}, we can show analogously to \cref{thm:rank-1_hankel} that a rank-1 Hankel matrix ${\mathbf H}_{1} \in {\mathbb C}^{M \times N}$ is of the form 
\begin{equation*}
\mathbf{H}_{1} = c \, {\mathbf w}_{M} \, {\mathbf w}_{N}^{T} \qquad \text{or} \qquad \mathbf{H}_{1} = c \, \tilde{\mathbf e}_{M} \tilde{\mathbf e}_{N}^{T}
\end{equation*}
with $\tilde{\mathbf e}_{N} = \lim\limits_{z \to \infty} {\mathbf w}_{N}$ in (\ref{en}).
\end{remark}

\section{Optimal Rank-1 Hankel Approximation in the Frobenius Norm}
\label{sec:frob}
\setcounter{equation}{0}

\subsection{Complex Rank-1 Hankel Approximations}

First we consider the minimization problem \cref{problem} in the Frobenius norm for complex matrices. We can assume that ${\mathbf A} =(a_{j,k})_{j,k=0}^{M-1,N-1}\in {\mathbb C}^{M \times N}$ satisfies  $|a_{0,0}| \ge |a_{M-1,N-1}|$, otherwise ${\mathbf A}$ may be simply replaced by 
${\mathbf J}_M {\mathbf A}{\mathbf J}_N$.
The assumption $|a_{0,0}| \ge |a_{M-1,N-1}|$ implies that 
$\left\lVert{\mathbf A}- c\, {\mathbf e}_{M} \, {\mathbf e}_{N}^{T}\right\rVert^2_F \ge \left\lVert{\mathbf A}- c\, {\mathbf z}_{M}(0) \, {\mathbf z}_{N}(0)^{T}\right\rVert^2_F$ such that the special case
${\mathbf H}_{1} = c \, {\mathbf e}_{M}{\mathbf e}_{N}^{T}$ will not occur as (the only) desired optimal rank-1 Hankel approximation of ${\mathbf A}$ and can be dropped.
\cref{thm:rank-1_hankel} implies that the minimization problem (\ref{problem}) can be reformulated as 
\begin{equation}\label{eq:minprob_1}
	\min_{c,z\in\mathbb{C}} \left\lVert{\mathbf A}- c\, {\mathbf z}_{M} \, {\mathbf z}_{N}^{T}\right\rVert^2_F, 
\end{equation}
i.e., we only need to find the two constants $c \in {\mathbb C} \setminus \{ 0 \}$ and $z \in {\mathbb C}$, such that the error $ {\mathbf A}- c\, {\mathbf z}_{M} \, {\mathbf z}_{N}^{T}$ is minimized in the Frobenius norm.

\begin{theorem}\label{theo1}
Let ${\mathbf A} =(a_{j,k})_{j,k=0}^{M-1,N-1}\in {\mathbb C}^{M \times N}$ with $M,N \ge 2$ and $|a_{0,0}| \ge |a_{M-1,N-1}|$. Assume that $\rank({\mathbf A}) \ge 1$.
Then an optimal rank-1 Hankel approximation $\mathbf{H}_{1} = \tilde{c} \, \tilde{\mathbf z}_{M} \, \tilde{\mathbf z}_{N}^{T}$ of ${\mathbf A}$ is determined by
\begin{equation}\label{Frobenius}
    \tilde{z} \in  \mathop{\mathrm{argmax}}\limits_{z \in {\mathbb C}}  |{\mathbf z}_{M}^{*} {\mathbf A} \overline{\mathbf z}_{N}| \qquad 
    \tilde{c} \coloneqq \tilde{\mathbf z}_{M}^{*} {\mathbf A} \overline{\tilde{\mathbf z}}_{N},
\end{equation}
where the vectors $\tilde{\mathbf z}_{M}$ and $\tilde{\mathbf z}_{N}$ are defined by $\tilde{z}$ via \cref{z} and $\tilde{\mathbf z}^* \coloneqq \overline{\tilde{\mathbf z}}^{T}$.
\end{theorem}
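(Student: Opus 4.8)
The plan is to start from the reformulation \cref{eq:minprob_1} and to carry out the minimization in two stages: first over $c$ for a fixed $z$, then over $z$. Abbreviate $\mathbf{B} \coloneqq {\mathbf z}_{M}\,{\mathbf z}_{N}^{T}$ and note that, by the normalization in \cref{z}, $\|\mathbf{B}\|_{F}^{2} = \|{\mathbf z}_{M}\|_{2}^{2}\,\|{\mathbf z}_{N}\|_{2}^{2} = 1$. Expanding in the Frobenius inner product gives
\[
\|{\mathbf A} - c\,\mathbf{B}\|_{F}^{2} = \text{trace}\big(({\mathbf A}-c\mathbf{B})^{*}({\mathbf A}-c\mathbf{B})\big) = \|{\mathbf A}\|_{F}^{2} - 2\,\operatorname{Re}\big(\bar c\,\text{trace}(\mathbf{B}^{*}{\mathbf A})\big) + |c|^{2}.
\]
Since $\mathbf{B}^{*} = \overline{\mathbf z}_{N}\,{\mathbf z}_{M}^{*}$, the cyclicity of the trace yields $\text{trace}(\mathbf{B}^{*}{\mathbf A}) = {\mathbf z}_{M}^{*}{\mathbf A}\,\overline{\mathbf z}_{N} =: w$, so the expression equals $\|{\mathbf A}\|_{F}^{2} + |c|^{2} - 2\operatorname{Re}(\bar c w)$. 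For fixed $z$ this is a strictly convex function of $c \in {\mathbb C}$ (it is just the squared distance of ${\mathbf A}$ to the line ${\mathbb C}\mathbf{B}$), minimized at $c = w = {\mathbf z}_{M}^{*}{\mathbf A}\,\overline{\mathbf z}_{N}$ with minimal value $\|{\mathbf A}\|_{F}^{2} - |w|^{2}$.

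Consequently \cref{eq:minprob_1} is equivalent to $\max_{z\in{\mathbb C}} |{\mathbf z}_{M}(z)^{*}{\mathbf A}\,\overline{\mathbf z}_{N}(z)|$, and as soon as a maximiser $\tilde z$ is found, the corresponding optimal coefficient is $\tilde c = \tilde{\mathbf z}_{M}^{*}{\mathbf A}\,\overline{\tilde{\mathbf z}}_{N}$; this is exactly the content of \cref{Frobenius}. What still has to be justified is that the maximum over ${\mathbb C}$ is actually \emph{attained}. The map $z \mapsto |{\mathbf z}_{M}(z)^{*}{\mathbf A}\,\overline{\mathbf z}_{N}(z)|$ is continuous on ${\mathbb C}$ and, by \cref{rem1}, extends continuously to $z=\infty$ with value $|\tilde{\mathbf e}_{M}^{*}{\mathbf A}\,\overline{\tilde{\mathbf e}}_{N}| = |a_{M-1,N-1}|$ (recall we reduced to ${\mathbf z}_{M},{\mathbf z}_{N}$ and dropped the $c\,{\mathbf e}_{M}{\mathbf e}_{N}^{T}$ case); hence it attains its maximum on the compact Riemann sphere. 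If that maximiser were $\infty$, then since ${\mathbf z}_{M}(0) = \tilde{\mathbf e}_{M}$, ${\mathbf z}_{N}(0) = \tilde{\mathbf e}_{N}$ give the value $|a_{0,0}|$, the hypothesis $|a_{0,0}| \ge |a_{M-1,N-1}|$ forces $z=0$ to be a maximiser as well; so in every case the maximum is attained at some $\tilde z \in {\mathbb C}$. The assumption $\rank({\mathbf A})\ge 1$ enters only to exclude the trivial case ${\mathbf A}=\mathbf 0$; one should note in passing that if all counter-diagonal sums of ${\mathbf A}$ vanish the maximal value is $0$ and the ``optimal rank-1 Hankel approximation'' degenerates to the zero matrix.

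The only genuinely delicate step is the attainment argument of the previous paragraph: a priori the supremum over the non-compact set ${\mathbb C}$ need not be attained, and it is precisely here that the normalization $|a_{0,0}| \ge |a_{M-1,N-1}|$ together with the limiting description of $\tilde{\mathbf e}_{M}, \tilde{\mathbf e}_{N}$ from \cref{rem1} is used. The remaining ingredients — the completion-of-squares identity for $\|{\mathbf A}-c\mathbf{B}\|_{F}^{2}$ and the identification of the optimal $c$ as the Frobenius-orthogonal projection coefficient — are routine computations.
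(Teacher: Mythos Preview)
Your proof is correct and follows essentially the same route as the paper's: expand $\|{\mathbf A}-c\,{\mathbf z}_M{\mathbf z}_N^T\|_F^2$, optimize over $c$ first to obtain $\tilde c={\mathbf z}_M^*{\mathbf A}\,\overline{\mathbf z}_N$ and residual $\|{\mathbf A}\|_F^2-|{\mathbf z}_M^*{\mathbf A}\,\overline{\mathbf z}_N|^2$, then maximize over $z$; the paper does the $c$-step via partial derivatives in $\operatorname{Re} c,\operatorname{Im} c$ whereas you use the cleaner convexity/projection argument. Your additional attainment argument via compactification is a welcome supplement the paper omits, but note a small slip: the limit vectors at $z=\infty$ are ${\mathbf e}_M,{\mathbf e}_N$ (last basis vectors), not $\tilde{\mathbf e}_M,\tilde{\mathbf e}_N$, though your stated limit value $|a_{M-1,N-1}|$ is correct.
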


\begin{proof}
Using the definition of the Frobenius norm, we obtain
\begin{align}\nonumber
    \left\| {\mathbf A} - c \, {\mathbf z}_{M} \, {\mathbf z}_{N}^{T} \right\|_{F}^{2}
    &= \textrm{trace} \, \Big( ({\mathbf A} - c \, {\mathbf z}_{M} \, {\mathbf z}_{N}^{T})^{*} ({\mathbf A} - c \, {\mathbf z}_{M} \, {\mathbf z}_{N}^{T}) \Big) \\
    \nonumber 
    &= \textrm{trace} \, \Big( {\mathbf A}^{*} {\mathbf A}  - c\,  {\mathbf A}^{*}{\mathbf z}_{M} \, {\mathbf z}_{N}^{T} - \overline{c} \, \overline{{\mathbf z}}_{N}\,  {\mathbf z}_{M}^{*} \, {\mathbf A} + |c|^{2} \, \overline{{\mathbf z}}_{N} \, {\mathbf z}_{M}^{*} {\mathbf z}_{M} \, {\mathbf z}_{N}^{T} \Big)\\
\label{frob1}
    &= \| {\mathbf A} \|_{F}^{2} - c \, {\mathbf z}_{M}^{T} \, \overline{\mathbf A} \, {\mathbf z}_{N} - \overline{c} \, {\mathbf z}_{M}^{*} {\mathbf A} \, \overline{\mathbf z}_{N} + |c|^{2},
\end{align}
where ${\mathbf z}_{M}^{*} \coloneqq \overline{\mathbf z}_{M}^{T}$. To solve the minimization problem in \cref{eq:minprob_1}, we first assume $z$ to be fixed and consider the derivatives with respect to $c_1$ and $c_2$, where $c = c_{1} + {\mathrm i} c_{2}$  with $c_{1}, c_{2} \in {\mathbb R}$ to obtain the necessary conditions
\begin{align*}
    \frac{\partial }{\partial c_{1}} \left\| {\mathbf A} - c \, {\mathbf z}_{M} \, {\mathbf z}_{N}^{T} \right\|_{F}^{2} &= -2 \, \text{Re} ( {\mathbf z}_{M}^{*} {\mathbf A} \overline{\mathbf z}_{N}) + 2 c_{1} = 0, \\
    \frac{\partial }{\partial c_{2}} \left\| {\mathbf A} - c \, {\mathbf z}_{M} \, {\mathbf z}_{N}^{T} \right\|_{F}^{2} &= -2 \, \text{Im} ( {\mathbf z}_{M}^{*} {\mathbf A} \overline{\mathbf z}_{N}) + 2 \, c_{2}  = 0.
\end{align*}
These yield the optimal $\tilde{c}= {\mathbf z}_{M}^{*} {\mathbf A} \overline{\mathbf z}_{N}$.
After substituting  $\tilde{c}$ into \cref{frob1} it remains to solve
$$
     \min_{z \in {\mathbb C}} \left( \| {\mathbf A} \|_{F}^{2} - 2 |{\mathbf z}_{M}^{*} {\mathbf A} \overline{\mathbf z}_{N}|^{2}    + |{\mathbf z}_{M}^{*} {\mathbf A} \overline{\mathbf z}_{N}|^{2} \right) 
=   \min_{z \in {\mathbb C}} \left(\| {\mathbf A} \|_{F}^{2} - |{\mathbf z}_{M}^{*} {\mathbf A} \overline{\mathbf z}_{N}|^{2} \right).
$$
Therefore,
\begin{equation*}
    \tilde{z} \in  \mathop{\mathrm{argmax}}_{z \in {\mathbb C}}  |{\mathbf z}_{M}^{*} {\mathbf A} \overline{\mathbf z}_{N}|^{2}
    = \mathop{\mathrm{argmax}}_{z \in {\mathbb C}}   |{\mathbf z}_{M}^{T} \overline{\mathbf A} {\mathbf z}_{N}|
\end{equation*}
as claimed.
\end{proof}

\begin{remark} \label{rem3.2}
1.\ By \cref{theo1}, the computation of the optimal rank-1 Hankel approximation of the matrix ${\mathbf A}$ reduces to the problem of finding a position, where the maximum of the  complex rational function  $|F(z)|^{2}$
with
\begin{equation}\label{fz}
    F(z) \coloneqq {\mathbf z}_{M}^{T} \overline{\mathbf A} {\mathbf z}_{N}
\end{equation}
is attained.
According to \cref{theo1}, we obtain $\tilde{z} = \mathop{\mathrm{argmax}}_{z} |F(z)|$ and $\tilde{c}= F(\tilde{z}).$
Since $\| {\mathbf z}_{M}\|_{2} = 1$ for all $z \in {\mathbb C}$, the function $F(z)$ has no poles. Moreover, $|F(z)|$ is bounded by $\|{\mathbf A}\|_{2}$, which follows from the proof of the next theorem.
If additionally ${\mathbf A} \in {\mathbb R}^{N\times N}$ is symmetric or ${\mathbf A} \in {\mathbb C}^{N\times N}$ is Hermitian, then $F(z)$ is a Rayleigh quotient and thus $\lambda_{min} \leq F(z) \leq \lambda_{max}$, where $\lambda_{min}$ and $\lambda_{max}$ are the smallest and largest eigenvalue of $\mathbf A$, respectively,
see e.g. \cite{Horn85} p.\ 176.

2.\ The value $\tilde{z}$ in \cref{Frobenius} may not be unique, i.e., $\max_{z \in {\mathbb C}} |F(z)|$ may be attained  for different values $\tilde{z}$. In this case, any of these values leads to an optimal Hankel rank-1 approximation. If for example ${\mathbf A} = (a_{j,k})_{j,k = 0}^{M-1, N-1} \in {\mathbb C}^{M \times N}$ is itself a Hankel matrix where $a_{j,k} = 0$ if $j+k$ is odd, 
then $\tilde{z}  \in \mathop{\mathrm{argmax}}_{z \in {\mathbb C}} |F(z)|$ implies that also $-\tilde{z} \in \mathop{\mathrm{argmax}}_{z \in {\mathbb C}} |F(z)|$.

3.\ For the function  $F(z)$ in \cref{fz} we observe that 
\begin{equation*}
    F(z) = {\mathbf z}_{M}^{T} \overline{\mathbf A} {\mathbf z}_{N}
    = \frac{\sum\limits_{\ell=0}^{M+N-2} 
    \left(\sum\limits_{j+k=\ell} \overline{a}_{j,k}\right) \, z^{\ell}}{ \left(\sum\limits_{j=0}^{M-1} |z|^{2j}\right)^{1/2} \, \left(\sum\limits_{j=0}^{N-1} |z|^{2j}\right)^{1/2}} 
    = {\mathbf z}_{M}^{T} \overline{P({\mathbf A})} {\mathbf z}_{N}
\end{equation*}
for ${\mathbf A}=(a_{j,k})_{j,k=0}^{M-1,N-1}$. 
Therefore, without loss of generality, ${\mathbf A}$ can be replaced by the Hankel matrix ${\mathbf H} = P({\mathbf A})$  in (\ref{eq:H-proj-1}). 
Further, the numerator  $\sum\limits_{\ell=0}^{M+N-2} 
\Big(\sum\limits_{j+k=\ell} \overline{a}_{j,k}\Big) \, z^{\ell}$ does not depend on reshaping of the matrix $P({\mathbf A})$
 to a $1 \times (M+N-2)$ Hankel matrix generated by $h_{\ell}:=\sum\limits_{j+k=\ell} {a}_{j,k}$, $\ell=0, \ldots, M+N-2$,   see also \cite{Heinig1984}.
This observation gives the link to the rational function approach in \cite{Usevich2012}.
However, while such reshaping does not change the rank of the Hankel matrix, it changes the solution of (\ref{problem}) since the denominator $\Big(\sum\limits_{j=0}^{M-1} |z|^{2j}\Big)^{1/2} \, \Big(\sum\limits_{j=0}^{N-1} |z|^{2j}\Big)^{1/2}$ strongly depends on the shape of the Hankel matrix.

4.\ In \cite{Karmarkar1998,Cheze2011}, a similar rational approximation problem  as \cref{Frobenius} appears in the context of finding an approximate greatest common divisor. In that context, one needs to minimize a rational function, where the denominator has exactly the same structure if $M=N$. In \cite{Cheze2011}, a subdivision method on squares in the complex plane is proposed to solve that problem.

\end{remark}

We may ask, how well a matrix ${\mathbf A}$ can be approximated by a rank-1 Hankel matrix $\mathbf{H}_{1}$. More precisely, we ask in which cases the Hankel-structured rank-1 approximation is as good as the unstructured rank-1 approximation. The unstructured low-rank approximation is given by the singular value decomposition according to the Eckart-Young-Mirsky Theorem.
Let  ${\mathbf u}_0 \in \mathbb C^M$ and ${\mathbf v}_0 \in \mathbb C^N$ denote the normalized singular vectors corresponding to the largest singular value $\sigma_0 = \lVert \mathbf A \rVert_2$ of $\mathbf A$. Then ${\mathbf u}_0$ and ${\mathbf v}_0$ are determined by the following set of equations
\begin{equation}\label{sing}
    {\mathbf A} {\mathbf A}^{*} {\mathbf u}_0 = \sigma_{0}^{2} {\mathbf u}_0, \quad 
    {\mathbf A}^{*} {\mathbf A} {\mathbf v}_0 = \sigma_{0}^{2} {\mathbf v}_0, \quad
    \textrm{and} \quad 
    {\mathbf u}_0= \frac{1}{\sigma_{0}} {\mathbf A} {\mathbf v}_0, \quad
    {\mathbf v}_0 = \frac{1}{\sigma_{0}} {\mathbf A}^{*} {\mathbf u}_0.
\end{equation} 
We show that the optimal approximation error can only be achieved if the singular vectors ${\mathbf u}_0$ and ${\mathbf v}_0$ corresponding to the largest singular value $\sigma_0$ have the special structure $\mathbf z_M$ and $\mathbf z_N$, respectively, for some $z\in\mathbb C$.

\begin{theorem}\label{theo2}
Let ${\mathbf A} =(a_{j,k})_{j,k=0}^{M-1,N-1}\in {\mathbb C}^{M \times N}$ with $M,N \ge 2$ and $|a_{0,0}| \ge |a_{M-1,N-1}|$.
The optimal rank-1 Hankel approximation error satisfies
\begin{equation}\label{lowapp}
 \min_{c,z \in {\mathbb C}} \left\| {\mathbf A} - {c} \, {\mathbf z}_{M}{\mathbf z}_{N}^{T} \right\|^{2}_{F}  = \left\| {\mathbf A} - \tilde{c} \, \tilde{\mathbf z}_{M} \tilde{\mathbf z}_{N}^{T} \right\|^{2}_{F}
    = \| {\mathbf A} \|_{F}^{2} - \sigma_{0}^{2} = \| {\mathbf A} \|_{F}^{2} - \| {\mathbf A} \|_{2}^{2},
\end{equation}
if and only if the two singular vectors of ${\mathbf A}$ in \cref{sing} corresponding to the largest singular value $\sigma_{0}$ are of the form ${\mathbf u}_0=  \tilde{\mathbf z}_{M}$ and ${\mathbf v}_0 =\overline{\tilde{\mathbf z}}_{N}$, where $\tilde{\mathbf z}_{M}$, $\tilde{\mathbf z}_{N}$ are defined by $\tilde{z}$ via \cref{z},  and where $\tilde{z}$ and $\tilde{c}$ are given by \cref{Frobenius}.
\end{theorem}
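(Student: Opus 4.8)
The plan is to start from the exact formula for the optimal error already implicit in \cref{theo1}. By that theorem (see also its proof and the formula \cref{frob1}), substituting the optimal $\tilde c = F(\tilde z)$ gives
\[
\min_{c,z\in\cc}\|{\mathbf A}-c\,{\mathbf z}_M{\mathbf z}_N^T\|_F^2 = \|{\mathbf A}\|_F^2 - \max_{z\in\cc}|{\mathbf z}_M^*{\mathbf A}\overline{\mathbf z}_N|^2,
\]
so the claimed identity \cref{lowapp} holds \emph{if and only if} $\max_{z\in\cc}|{\mathbf z}_M^*{\mathbf A}\overline{\mathbf z}_N|^2 = \sigma_0^2 = \|{\mathbf A}\|_2^2$. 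Hence the whole statement reduces to characterizing when this maximum equals $\sigma_0^2$. So the two things I must prove are: (i) for every $z\in\cc$ one has $|{\mathbf z}_M^*{\mathbf A}\overline{\mathbf z}_N|\le\sigma_0$ (this is the boundedness claim referenced in \cref{rem3.2}), and (ii) equality is attained for some $\tilde z$ precisely when the top singular vectors have the structured form ${\mathbf u}_0=\tilde{\mathbf z}_M$, ${\mathbf v}_0=\overline{\tilde{\mathbf z}}_N$.

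For (i), the key observation is that ${\mathbf z}_M$ and $\overline{\mathbf z}_N$ are unit vectors by construction \cref{z}. Therefore
\[
|{\mathbf z}_M^*{\mathbf A}\overline{\mathbf z}_N| \le \|{\mathbf z}_M\|_2\,\|{\mathbf A}\overline{\mathbf z}_N\|_2 \le \|{\mathbf A}\|_2\,\|{\mathbf z}_M\|_2\,\|\overline{\mathbf z}_N\|_2 = \sigma_0,
\]
by Cauchy--Schwarz and the definition of the spectral norm. This immediately gives the inequality $\min_{c,z}\|{\mathbf A}-c{\mathbf z}_M{\mathbf z}_N^T\|_F^2 \ge \|{\mathbf A}\|_F^2-\sigma_0^2$ in general, which matches the Eckart--Young--Mirsky lower bound as expected.

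For (ii), I analyze the two Cauchy--Schwarz/operator-norm inequalities above for the equality case. Equality $|{\mathbf z}_M^*{\mathbf A}\overline{\mathbf z}_N| = \sigma_0$ forces equality in both steps: the Cauchy--Schwarz step forces ${\mathbf A}\overline{\mathbf z}_N$ to be a scalar multiple of ${\mathbf z}_M$, and the operator-norm step $\|{\mathbf A}\overline{\mathbf z}_N\|_2=\sigma_0\|\overline{\mathbf z}_N\|_2$ forces $\overline{\mathbf z}_N$ to lie in the top right-singular subspace, i.e., ${\mathbf A}^*{\mathbf A}\overline{\mathbf z}_N=\sigma_0^2\overline{\mathbf z}_N$, and consequently ${\mathbf A}\overline{\mathbf z}_N = \sigma_0\,{\mathbf u}$ for a unit vector ${\mathbf u}$ satisfying the SVD relations \cref{sing}. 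Combining, ${\mathbf z}_M$ must be this unit vector ${\mathbf u}$ (up to a unimodular factor which one absorbs into $z$ / $c$, or sidesteps by noting the singular vectors are determined only up to such a factor), giving ${\mathbf u}_0=\tilde{\mathbf z}_M$ and ${\mathbf v}_0=\overline{\tilde{\mathbf z}}_N$. Conversely, if the top singular vectors already have this structured form for some $\tilde z$, then plugging $z=\tilde z$ into $F$ gives $|F(\tilde z)| = |\tilde{\mathbf z}_M^*{\mathbf A}\overline{\tilde{\mathbf z}}_N| = |{\mathbf u}_0^*{\mathbf A}{\mathbf v}_0| = \sigma_0$, so the maximum is attained and \cref{lowapp} holds; uniqueness/ordering of $\sigma_0$ ties $\tilde z$ and $\tilde c=F(\tilde z)$ to \cref{Frobenius}.

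The main obstacle I anticipate is the bookkeeping around the unimodular ambiguity of singular vectors (and the case of a repeated largest singular value, where the top singular subspace has dimension $>1$): I must phrase "the singular vectors corresponding to $\sigma_0$ are of the form ${\mathbf u}_0=\tilde{\mathbf z}_M$, ${\mathbf v}_0=\overline{\tilde{\mathbf z}}_N$" so that it is compatible with that ambiguity — essentially asserting that the structured vectors $\tilde{\mathbf z}_M,\overline{\tilde{\mathbf z}}_N$ span (lie in) the top left/right singular subspaces and are correctly paired via ${\mathbf A}\overline{\tilde{\mathbf z}}_N = \sigma_0\tilde{\mathbf z}_M$. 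Everything else is a direct equality-analysis of Cauchy--Schwarz together with the identity for the optimal error taken from \cref{theo1}.
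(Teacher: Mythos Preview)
Your proposal is correct and takes essentially the same approach as the paper: both reduce \cref{lowapp} to the equality case of the bound $|{\mathbf z}_M^*{\mathbf A}\overline{\mathbf z}_N|\le\sigma_0$ and analyze when equality is attained. The paper sets up the chain as $|\tilde{\mathbf z}_M^*{\mathbf A}\overline{\tilde{\mathbf z}}_N|^2\le\|\overline{\tilde{\mathbf z}}_N\tilde{\mathbf z}_N^T\|_2\,\|{\mathbf A}^*\tilde{\mathbf z}_M\|_2^2\le\|{\mathbf A}\|_2^2$ via Rayleigh--Ritz, whereas you use the equivalent Cauchy--Schwarz/operator-norm chain $|{\mathbf z}_M^*{\mathbf A}\overline{\mathbf z}_N|\le\|{\mathbf A}\overline{\mathbf z}_N\|_2\le\sigma_0$; the equality analyses coincide (the paper's are just the adjoint versions of yours), and your explicit remark about the unimodular/multiplicity ambiguity is a point the paper leaves implicit.
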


\begin{proof}
1.\ Considering the singular value decomposition of ${\mathbf A}$ we obtain an optimal (unstructured) rank-1 approximation of ${\mathbf A}$ with respect to the Frobenius norm of the form $\sigma_{0} \, {\mathbf u}_0 \, {\mathbf v}_0^{*}$ with ${\mathbf u}_0$, ${\mathbf v}_0$ in \cref{sing}. If now ${\mathbf u}_0 =  \tilde{\mathbf z}_{M}$ and ${\mathbf v}_0 = \overline{\tilde{\mathbf z}}_{N}$, then it follows with $\tilde{c}$ from \cref{Frobenius} that 
\begin{equation*}
\tilde{c} \, \tilde{\mathbf z}_{M} \tilde{\mathbf z}_{N}^{T} =
\tilde{\mathbf z}_{M}^{*} {\mathbf A} \overline{\tilde{\mathbf z}}_{N} \, \tilde{\mathbf z}_{M} \tilde{\mathbf z}_{N}^{T} 
= ({\mathbf u}_0^{*} {\mathbf A} {\mathbf v}_0) \, {\mathbf u}_0 \, {\mathbf v}_0^{*} = \sigma_{0} \, {\mathbf u}_0 \, {\mathbf v}_0^{*},
\end{equation*}
i.e., the unstructured and the structured rank-1 approximation coincide.

2.\ Assume that the structured low-rank approximation $\tilde{c} \, {\tilde{\mathbf z}}_{M} \, {\tilde{\mathbf z}}_{N}^{T}$ provides the optimal error in \cref{lowapp}. According to equation \cref{frob1} from the proof of \cref{theo1} we have
\begin{equation*}
    \left\| {\mathbf A} - \tilde{c} \, \tilde{\mathbf z}_{M} \tilde{\mathbf z}_{N}^{T} \right\|_{F}^{2} = \|{\mathbf A} \|_{F}^{2} - |\tilde{\mathbf z}_{M}^{*} \, {\mathbf A} \, \overline{\tilde{\mathbf z}}_{N}|^{2}
    \end{equation*}
and it follows on the one hand
\begin{equation*}
    \sigma_{0}^{2} = \|{\mathbf A} \|_{2}^{2} = |\tilde{\mathbf z}_{M}^{*} \, {\mathbf A} \, \overline{\tilde{\mathbf z}}_{N}|^{2}.
\end{equation*}
On the other hand,  the Theorem of Rayleigh-Ritz (see \cite{Horn85}, p.\ 176) implies 
\begin{equation*}
  | \tilde{\mathbf z}_{M}^{*} \, {\mathbf A} \, \overline{\tilde{\mathbf z}}_{N}|^{2}    %
    \stackrel{(\star)}{\le} \|\overline{\tilde{\mathbf z}}_{N} \, \tilde{\mathbf z}_{N}^{T} \|_{2} 
   \, \|{\mathbf A}^{*}  \tilde{\mathbf z}_{M} \|_{2}^{2}
    = \| {\mathbf A}^{*}  \tilde{\mathbf z}_{M} \|_{2}^{2}
    \stackrel{(\diamond)}{\le} 
    \|{\mathbf A}\|_{2}^{2}.
\end{equation*}
Here, equality at $(\star)$ only holds if ${\mathbf A}^{*} \tilde{\mathbf z}_{M}$ is an eigenvector of $\overline{\tilde{\mathbf z}}_{N} \, \tilde{\mathbf z}_{N}^{T}$ to the non-zero eigenvalue $\|\tilde{\mathbf z}_{N} \|_{2}^{2}$. Equality at $(\diamond)$ is achieved if moreover $\tilde{\mathbf z}_{M}$ is an eigenvector of ${\mathbf A} {\mathbf A}^{*}$  to the largest eigenvalue $\sigma_{0}^{2}= \|{\mathbf A} \|_{2}^{2}$. The assertion now follows by comparison with \cref{sing}.
\end{proof}

In the remainder of this section, we will derive  further  properties of the optimal value $\tilde{z}$ in \cref{Frobenius} in order to provide an efficient algorithm to compute $\tilde{z}$ and $\tilde{c}$. 
First we consider the possible range of $\tilde{z}$. 
For this purpose, we recall that a rank-1 Hankel matrix $\mathbf{H}_{1}$ can also  be represented as 
$\mathbf{H}_{1} = c \, {\mathbf w}_{M} \, {\mathbf w}_{N}^{T}$ or $\mathbf{H}_{1} = c \, \tilde{\mathbf e}_{M} \tilde{\mathbf e}_{N}^{T}$
with ${\mathbf w}_{N}= {\mathbf w}_{N}(z)$ in \cref{w}, and $\tilde{\mathbf e}_{N}$ in (\ref{en}).

\begin{theorem}\label{theoneu}
Let ${\mathbf A} \in {\mathbb C}^{M \times N}$ with $M,N \ge 2$ and $\rank({\mathbf A}) \ge 1$. 
Define
\begin{equation*}
    F(z) \coloneqq {\mathbf z}_{M}^{*} \, {\mathbf A} \overline{\mathbf z}_{N}, \qquad 
    F_{1}(z) \coloneqq {\mathbf z}_{M}^{*} \, {\mathbf J}_{M}\, {\mathbf A} \, {\mathbf J}_{N}\, \overline{\mathbf z}_{N}\end{equation*}
for $z \in {\mathbb C}$ with ${\mathbf J}_{N}$ as in \cref{J}. 
Let $M_{0}\coloneqq \max_{|z|\le 1} |F(z)|$ and $M_{1}\coloneqq \max_{|z|\le 1} |F_{1}(z)|$.
Then the optimal rank-1 Hankel approximation $\mathbf{H}_{1} = \tilde{c} \, \tilde{\mathbf z}_{M} \, \tilde{\mathbf z}_{N}^{T}$ of ${\mathbf A}$ is determined by
\begin{equation*}
    \tilde{z} \, \in
    \begin{cases}
        \mathop{\mathrm{argmax}}_{|z|\leq 1} |F(z)| & \text{if } M_0 \geq M_1,\\[1ex]
        \left(\mathop{\mathrm{argmax}}_{|z|<1} |F_1(z)| \right)^{-1} & \text{if } M_1 > M_0,
    \end{cases}
    \qquad
    \tilde{c} \coloneqq  \tilde{\mathbf z}_{M}^{*} {\mathbf A} \overline{\tilde{\mathbf z}}_{N}.
\end{equation*}
\end{theorem}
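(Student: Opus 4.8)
The plan is to reduce the unconstrained search $\tilde z\in\argmax_{z\in\mathbb C}|F(z)|$ of \cref{theo1} to a search over the closed unit disk, by means of a reciprocal symmetry linking $F$ and $F_1$. First I would record, for $z\neq 0$, the elementary identity $\mathbf z_N(1/z)=(\overline z/|z|)^{N-1}\,\mathbf w_N(z)$, which follows from $\big(\sum_{k=0}^{N-1}|1/z|^{2k}\big)^{-1/2}=|z|^{N-1}\big(\sum_{k=0}^{N-1}|z|^{2k}\big)^{-1/2}$ together with $(1,1/z,\dots,z^{-(N-1)})^{T}=z^{-(N-1)}(z^{N-1},\dots,1)^{T}$, and likewise $\mathbf z_M(1/z)=(\overline z/|z|)^{M-1}\,\mathbf w_M(z)$. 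Since $\mathbf J_N$ is real and symmetric one has $\mathbf z_M^{*}(z)\,\mathbf J_M\mathbf A\mathbf J_N\,\overline{\mathbf z}_N(z)=\mathbf w_M^{*}(z)\,\mathbf A\,\overline{\mathbf w}_N(z)=F_1(z)$, and combining these facts yields
\[
 F(1/z)=\Big(\tfrac{z}{|z|}\Big)^{M+N-2}F_1(z),\qquad z\in\mathbb C\setminus\{0\},
\]
so that $|F(1/z)|=|F_1(z)|$ for every $z\neq 0$.

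Next I would make precise that the relevant maxima are attained. Writing $F(z)=\big(\sum_{j=0}^{M-1}|z|^{2j}\big)^{-1/2}\big(\sum_{k=0}^{N-1}|z|^{2k}\big)^{-1/2}\sum_{\ell=0}^{M+N-2}c_\ell\,\overline z^{\ell}$ with $c_\ell:=\sum_{j+k=\ell}a_{j,k}$ (and similarly for $F_1$, with $\mathbf J_M\mathbf A\mathbf J_N$ in place of $\mathbf A$), one sees that $|F|^{2}$ is a ratio of polynomials in $(z,\overline z)$ whose numerator and denominator have the same top total degree $2(M+N-2)$ and whose denominator is everywhere $\geq 1$; hence $|F|$ and $|F_1|$ extend continuously to the Riemann sphere $\widehat{\mathbb C}$, with $|F(\infty)|=|c_{M+N-2}|=|a_{M-1,N-1}|=|F_1(0)|$, consistently with the displayed identity and with $\mathbf e_N=\lim_{z\to\infty}\mathbf z_N$ from \cref{rem1}. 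In particular the maxima defining $M_0$ and $M_1$ are attained, and an optimal maximizer $\tilde z$ of $|F|$ over $\widehat{\mathbb C}$ exists (with $\tilde z=\infty$ understood as in \cref{rem1}, i.e.\ $\mathbf H_1=c\,\mathbf e_M\mathbf e_N^{T}$). Since the inversion $z\mapsto 1/z$ is a homeomorphism of $\widehat{\mathbb C}$ carrying $\overline{\mathbb D}$ onto $\{|z|\ge 1\}\cup\{\infty\}$, the identity $|F(1/z)|=|F_1(z)|$ gives $\max_{|z|\le 1}|F|=M_0$, $\max_{\{|z|\ge 1\}\cup\{\infty\}}|F|=\max_{|z|\le 1}|F_1|=M_1$, and therefore $\max_{\widehat{\mathbb C}}|F|=\max\{M_0,M_1\}$.

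Finally I would split into the two cases. If $M_0\ge M_1$, then $\max_{\widehat{\mathbb C}}|F|=M_0=\max_{\overline{\mathbb D}}|F|$, so any $\tilde z\in\argmax_{|z|\le 1}|F(z)|$ is a global maximizer of $|F|$ over $\mathbb C$, and \cref{theo1} produces the optimal rank-1 Hankel approximation with $\tilde c=\tilde{\mathbf z}_M^{*}\mathbf A\overline{\tilde{\mathbf z}}_N$. If $M_1>M_0$, pick $w^{*}\in\argmax_{|z|\le 1}|F_1(z)|$; were $|w^{*}|=1$ we would obtain $|F_1(w^{*})|=|F(1/w^{*})|\le\max_{|z|=1}|F|\le M_0<M_1$, a contradiction, so $|w^{*}|<1$, i.e.\ $\max_{|z|<1}|F_1|=M_1$ is attained in the open disk and $\argmax_{|z|\le1}|F_1|=\argmax_{|z|<1}|F_1|$. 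Then $\tilde z:=1/w^{*}\in\big(\argmax_{|z|<1}|F_1(z)|\big)^{-1}$ satisfies $|F(\tilde z)|=|F_1(w^{*})|=M_1=\max_{\widehat{\mathbb C}}|F|$, so $\tilde z$ is again a global maximizer and \cref{theo1} applies (with $\tilde z=\infty$ if $w^{*}=0$).

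The genuinely delicate points are two. First, the phase bookkeeping in $F(1/z)=(z/|z|)^{M+N-2}F_1(z)$ and its consistency at $z=0$ and $z=\infty$ — this is what forces the detour through the Riemann sphere compactification and the continuous extension of $|F|$ and $|F_1|$. Second, in the case $M_1>M_0$, the argument that every maximizer of $|F_1|$ over $\overline{\mathbb D}$ actually lies in the open disk, which is precisely what justifies writing $|z|<1$ rather than $|z|\le 1$ in the statement; everything else is compactness on $\widehat{\mathbb C}$ together with \cref{theo1}.
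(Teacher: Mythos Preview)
Your proof is correct and follows the same core route as the paper: establish $|F(1/z)|=|F_1(z)|$ and then invoke \cref{theo1}. In fact you are more careful than the paper on two points. First, the paper asserts $\mathbf J_N\mathbf z_N(z)=\mathbf z_N(1/z)$ as an equality, whereas for complex $z$ this holds only up to the unimodular factor $(\overline z/|z|)^{N-1}$ that you track explicitly; since only $|F|$ and $|F_1|$ matter, the conclusion is unaffected, but your phase bookkeeping is the accurate statement. Second, the paper simply says ``the assertion now follows from \cref{theo1}'' without spelling out the compactness argument on $\widehat{\mathbb C}$ or the reason why, in the case $M_1>M_0$, every maximizer of $|F_1|$ over $\overline{\mathbb D}$ must lie in the open disk (which is what justifies the strict inequality $|z|<1$ in the statement). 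You supply both of these details, so your argument is a strictly more complete version of the same proof.
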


\begin{proof}
We show that $|F_{1}(z)| = |F(1/z)|$, then the assertion of the theorem follows from \cref{theo1}.
We observe from (\ref{w}) that
\begin{equation*}
    {\mathbf J}_{N} {\mathbf z}_{N}(z)  
            = {\mathbf z}_N\left(1/z\right) = {\mathbf w}_{N}(z) \quad \textrm{for} \; z \neq 0 \quad \textrm{and} \quad  {\mathbf J}_{N} {\mathbf e}_{N}= \tilde{\mathbf e}_{N}.
\end{equation*}
Thus, we conclude that  
\begin{equation*}
    |F_{1}(z)|
    =|(\mathbf J_M\mathbf z_{M}(z))^{*}  {\mathbf A}   (\mathbf J_N \overline{\mathbf z}_{N}(z))|    %
    =  |({\mathbf z}_{M}(1/z))^{*}  {\mathbf A} \,  \overline{\mathbf z}_{N}(1/z)|
    = \left|F\left(1/z \right)\right|
\end{equation*}
for $z \neq 0$ as well as
\begin{equation*}
    |F_{1}(0)| = |({\mathbf J}_M {\mathbf e}_{M})^{*}  {\mathbf A}   ({\mathbf J}_N \overline{\mathbf e}_{N})| = 
    |(\tilde{\mathbf e}_{M})^{*}  {\mathbf A}   \overline{\tilde{\mathbf e}}_{N}| = \lim_{z \to 0} \left|F\left(1/z\right)\right|.
\end{equation*}
The assertion now follows from \cref{theo1}.
\end{proof}

\begin{remark}
Using \cref{theoneu},  we can restrict the search for an optimal value $\tilde{z}$ to the unit disc $\{ z: |z| \le 1 \}$ if we consider the two functions $F(z)$ and $F_{1}(z)$.
\end{remark}

\subsection{Real Rank-1 Hankel Approximations}

In the following we will consider real matrices ${\mathbf A} \in {\mathbb R}^{M \times N}$ and restrict the search to real optimal rank-1 Hankel approximations, i.e., we search for real parameters $\tilde{c}$ and $\tilde{z}$. Then we can derive further conditions on $\tilde{z}$ that simplify the computation of the optimal rank-1 Hankel approximation of ${\mathbf A}$. A similar approach in a weighted Frobenius norm has been presented in \cite{deMoor94}.

\begin{theorem} \label{theo3}   
Let ${\mathbf A} \in {\mathbb R}^{M \times N}$  with $M,N \ge 2$, $\rank({\mathbf A}) \ge 1$, and  $|{a_{0,0}}| \ge |{a_{M-1,N-1}}|$. If $H_{1} = \tilde{c} \, \tilde{\mathbf z}_{M} \tilde{\mathbf z}_{N}^{T}$ is an optimal rank-1 Hankel approximation of ${\mathbf A}$, then 
\begin{equation*}
    Q(\tilde z) \coloneqq a'(\tilde z)\, p(\tilde z) - a(\tilde z) \, p'(\tilde z) =0,
\end{equation*}
with  
\begin{align*}
    a( z) \coloneqq \sum_{j=0}^{N-1} \sum_{k=0}^{N-1} a_{j,k} \, z^{j+k}, \qquad 
    p(z) \coloneqq \left(\sum_{k=0}^{M-1} z^{2k} \right)^{1/2} \left(\sum_{k=0}^{N-1} z^{2k}\right)^{1/2} \ge 1.
\end{align*}
Here, $a'(z)$ and $p'(z)$ denote the first derivatives of $a(z)$ and $p(z)$, respectively.
\end{theorem}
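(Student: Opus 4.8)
The plan is to reduce the statement to a one-variable calculus problem. By \cref{theo1} (and the real analogue of its reasoning), an optimal real rank-1 Hankel approximation is obtained by maximizing $|F(z)|$ over $z \in \mathbb{R}$, where for real $\mathbf{A}$ with $M = N$ we have
\begin{equation*}
    F(z) = {\mathbf z}_{M}^{*} {\mathbf A} \overline{\mathbf z}_{N} = \frac{\sum_{j,k} a_{j,k}\, z^{j+k}}{\bigl(\sum_{k=0}^{M-1} z^{2k}\bigr)^{1/2}\bigl(\sum_{k=0}^{N-1} z^{2k}\bigr)^{1/2}} = \frac{a(z)}{p(z)}.
\end{equation*}
(For $M \neq N$ one adjusts $a(z)$ to be the numerator $\sum_{\ell} \bigl(\sum_{j+k=\ell} a_{j,k}\bigr) z^{\ell}$ coming from \cref{rem3.2}, but the structure is the same.) So the first step is to record that $\tilde z$ is a real maximizer of the rational function $z \mapsto a(z)/p(z)$, hence a maximizer of $(a(z)/p(z))^2$.

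Next I would invoke the first-order necessary condition. Since $F(z) = a(z)/p(z)$ is a smooth real function on all of $\mathbb{R}$ — note $p(z) \ge 1 > 0$ everywhere, so there are no poles, a point already observed in \cref{rem3.2} — any interior maximizer $\tilde z$ satisfies $F'(\tilde z) = 0$. By the quotient rule,
\begin{equation*}
    F'(z) = \frac{a'(z)\, p(z) - a(z)\, p'(z)}{p(z)^2} = \frac{Q(z)}{p(z)^2},
\end{equation*}
so $F'(\tilde z) = 0$ forces $Q(\tilde z) = 0$, which is exactly the claim. A small point to address: $p(z)$ itself is not a polynomial (it is a product of square roots), but $p(z)^2 = \bigl(\sum z^{2k}\bigr)^2$ is a polynomial and $p$ is smooth and strictly positive on $\mathbb{R}$, so $p'$ exists and the quotient-rule computation is legitimate; alternatively one differentiates $F^2 = a^2/p^2$ and factors out $a/p^2$, which also leads to $Q(\tilde z) = 0$ provided one checks the case $a(\tilde z) = 0$ does not interfere (if $a(\tilde z) = 0$ then $F(\tilde z) = 0$, which cannot be a maximum unless $\mathbf{A}$ is orthogonal to all such rank-1 Hankel matrices, i.e.\ $\rank(\mathbf{A})$ considerations rule this out, or it is handled trivially).

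The only genuine subtlety — and the step I expect to need the most care — is the possibility that the maximum of $|F(z)|$ over $\mathbb{R}$ is not attained at a finite $z$ but "at infinity", i.e.\ the optimal rank-1 Hankel matrix is of the limiting form $c\,{\mathbf e}_{M}{\mathbf e}_{N}^{T}$. The hypothesis $|a_{0,0}| \ge |a_{M-1,N-1}|$ is precisely what was used just before \cref{eq:minprob_1} to exclude this: it guarantees $|F(0)| \ge \lim_{|z|\to\infty}|F(z)|$, so that the supremum over $\mathbb{R} \cup \{\infty\}$ is already achieved on $\mathbb{R}$, hence at a finite interior point $\tilde z$ where $F'$ vanishes. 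I would state this reduction explicitly, citing that earlier argument, and then conclude $Q(\tilde z) = 0$. The remaining content of the theorem — that $a$ and $p$ have the stated explicit forms and $p(z) \ge 1$ — is immediate from \cref{z} and the definition of the Frobenius inner product, so no real work is needed there.
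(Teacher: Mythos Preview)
Your proposal is correct and follows essentially the same route as the paper: invoke \cref{theo1} to identify $\tilde z$ as a real maximizer of $|F(z)|=|a(z)/p(z)|$, note $p(z)\ge 1$ so $F$ is smooth on $\mathbb{R}$, and apply the quotient rule to $F'(\tilde z)=0$. If anything you are more explicit than the paper about why the maximum is attained at a finite $\tilde z$ (via the hypothesis $|a_{0,0}|\ge|a_{M-1,N-1}|$) and about the differentiability of $p$; the paper's proof simply records $F(z)=a(z)/p(z)$, states that $\tilde z$ is an extremum of $F$, and reads off $Q(\tilde z)=0$ from the numerator of $F'$.
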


\begin{proof}  
According to \cref{theo1} we obtain $\tilde{z}$ as 
\begin{equation*}
    \tilde{z} \in  \mathop{\mathrm{argmax}}_{z \in {\mathbb R}} |F(z)|
    \qquad \textrm{with} \qquad 
    F(z) = {\mathbf z}_{M}^{T} {\mathbf A} {\mathbf z}_{N} = \frac{a(z)}{p(z)}.
\end{equation*}
Thus, $F(\tilde{z})$ is an extremal value of $F$, i.e., $F'(\tilde{z}) =0$.
The first derivative of $F$ is given by
\begin{equation*}
    F'(z) =  \left( \frac{a'(z)p(z)- a(z)p'(z)}{p(z)^{2}} \right).
\end{equation*}
Since $p(z)\ge 1$ for all $z \in {\mathbb R}$, we obtain for $\tilde{z}$ the necessary condition 
\begin{equation*}
    a'(\tilde z)p(\tilde z)- a(\tilde z)p'(\tilde z)= 0,
\end{equation*}
as was claimed.
\end{proof}

Considering the monomial representation of the polynomial
\begin{equation}\label{pola}
    a(z) = \sum_{j=0}^{M-1} \sum_{k=0}^{N-1} a_{j,k} \, z^{j+k}
    \eqqcolon \sum_{\ell=0}^{M+N-2} h_{\ell} \, z^{\ell}
\end{equation}
with $h_{\ell} := \sum_{j+k=\ell} a_{j,k}$ as in \cref{rem3.2}(3), we can conclude even more.

\begin{corollary}\label{coro1}
Let ${\mathbf A}$ be a real $M \times N$ matrix 
with $M,N \ge 2$, $\rank({\mathbf A}) \ge 1$, and $|{a_{0,0}}| \ge |{a_{M-1,N-1}}|$ .
Let $a(z)$ be given as in \cref{pola} and 
\begin{equation*}
    \tilde{z} \in \mathop{\mathrm{argmax}}_{z \in {\mathbb R}}  ({\mathbf z}_{M}^{T} {\mathbf A} {\mathbf z}_{N})^{2}= \mathop{\mathrm{argmax}}_{z \in {\mathbb R}} \left( \frac{a(z)}{p(z)} \right)^{2}.
\end{equation*}
\begin{description}
  \item{(1)} If $h_{\ell} \ge 0$ for $\ell=0, \ldots , M+N-2$ and $h_{0} \ge h_{M+N-2}$, then there exists $\tilde{z} \ge 0$.
  \item{(2)} If $h_{\ell} \ge 0$ for $\ell$ even and $h_{\ell}\le 0$ for $\ell$ odd, then there exists $\tilde{z} \le 0$. 
\end{description}
\end{corollary}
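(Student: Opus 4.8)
The plan is to exploit the sign structure of the coefficients $h_\ell$ to show that the maximum of $\left(a(z)/p(z)\right)^2$ over $z \in \mathbb{R}$ can always be realized at a point of the required sign, by comparing the value of $|F(z)| = |a(z)|/p(z)$ at an arbitrary $z$ with its value at $|z|$ (for part (1)) or at $-|z|$ (for part (2)). The key observation is that $p(z) = p(-z) = p(|z|)$, since $p$ depends only on even powers of $z$; hence only the numerator $a(z)$ changes sign behaviour under $z \mapsto \pm|z|$, and it suffices to control $|a(z)|$.

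For part (1), suppose $\tilde z < 0$ is an argmax and set $z_0 := |\tilde z| = -\tilde z > 0$. Writing $a(z) = \sum_{\ell=0}^{M+N-2} h_\ell z^\ell$ with all $h_\ell \ge 0$, the triangle inequality gives
\begin{equation*}
    |a(\tilde z)| = \Bigl| \sum_{\ell=0}^{M+N-2} h_\ell (\tilde z)^\ell \Bigr|
    \le \sum_{\ell=0}^{M+N-2} h_\ell |\tilde z|^\ell
    = \sum_{\ell=0}^{M+N-2} h_\ell z_0^\ell = a(z_0),
\end{equation*}
where the last equality uses $h_\ell \ge 0$ and $z_0 > 0$ so that $a(z_0) \ge 0$. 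Dividing by $p(\tilde z) = p(z_0)$ yields $|F(\tilde z)| \le |F(z_0)| = F(z_0)$, so $z_0 \ge 0$ is also an argmax; if $\tilde z \ge 0$ already there is nothing to do. (The hypothesis $h_0 \ge h_{M+N-2}$ is exactly the normalization $|a_{0,0}| \ge |a_{M-1,N-1}|$ rewritten for the reshaped coefficients, guaranteeing consistency with the standing assumption of \cref{theo1}, rather than being needed for this argmax comparison itself.)

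For part (2), suppose $\tilde z > 0$ is an argmax and set $z_0 := -|\tilde z| = -\tilde z < 0$. Now $h_\ell \ge 0$ for $\ell$ even and $h_\ell \le 0$ for $\ell$ odd means $\sgn(h_\ell) = (-1)^\ell |h_\ell|/h_\ell$ whenever $h_\ell \ne 0$, so that $h_\ell z_0^\ell = h_\ell (-\tilde z)^\ell = (-1)^\ell h_\ell \tilde z^\ell = -|h_\ell| \tilde z^\ell$; hence $a(z_0) = -\sum_\ell |h_\ell| \tilde z^\ell$, and therefore
\begin{equation*}
    |a(\tilde z)| = \Bigl| \sum_{\ell} h_\ell \tilde z^\ell \Bigr|
    \le \sum_{\ell} |h_\ell| \tilde z^\ell = |a(z_0)|.
\end{equation*}
Again $p(\tilde z) = p(z_0)$ gives $|F(\tilde z)| \le |F(z_0)|$, so $z_0 \le 0$ is an argmax. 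I would phrase both cases uniformly: the map $z \mapsto \varepsilon |z|$ with $\varepsilon = +1$ in (1) and $\varepsilon = -1$ in (2) never decreases $|F|$, because under the stated sign hypotheses $|a(\varepsilon|z|)| = \sum_\ell |h_\ell|\,|z|^\ell \ge |a(z)|$ while $p$ is unchanged.

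I do not anticipate a serious obstacle; the only point requiring a little care is the existence of an argmax in the first place, i.e. that $\sup_{z \in \mathbb{R}} |F(z)|$ is attained. This follows since $|F|$ is continuous on $\mathbb{R}$ and, as noted in \cref{rem3.2}(1) (via the boundedness $|F(z)| \le \|\mathbf{A}\|_2$ and the limit behaviour ${\mathbf e}_N = \lim_{|z|\to\infty}{\mathbf z}_N$), it extends continuously to the one-point compactification; alternatively one invokes \cref{theoneu}, which already guarantees an argmax inside the closed unit disc, and the comparison above then only needs to be run for $z$ with $|z| \le 1$, where it is literally identical. A secondary subtlety is the degenerate case $a \equiv 0$, which is excluded by $\rank(\mathbf{A}) \ge 1$ together with $F(\tilde z) = \tilde c \ne 0$ from \cref{theo1}.
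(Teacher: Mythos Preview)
Your approach is essentially identical to the paper's: both arguments use that $p$ is even and compare $|a(z)|$ with $|a(\pm|z|)|$ via the sign pattern of the $h_\ell$; the paper phrases this as $a(z)\ge a(-z)$ for $z\ge 0$ in case~(1) (and the reverse in case~(2)), while you use the triangle inequality, but the content is the same.

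One minor slip to fix in part~(2): you write $(-1)^\ell h_\ell \tilde z^\ell = -|h_\ell|\tilde z^\ell$ and hence $a(z_0)=-\sum_\ell |h_\ell|\tilde z^\ell$, but in fact the sign hypothesis gives $(-1)^\ell h_\ell = |h_\ell|$ (check $\ell$ even and $\ell$ odd separately), so $a(z_0)=+\sum_\ell |h_\ell|\tilde z^\ell$. This does not affect your conclusion since you pass to $|a(z_0)|$ immediately afterwards, and your unified statement $|a(\varepsilon|z|)|=\sum_\ell |h_\ell|\,|z|^\ell$ is correct as written.
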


\begin{proof} The first assertion follows directly from the observation that $a(z) \ge a(-z)$ for $z  \ge 0$ while $p(z) = p(-z)$ is an even function. In the second case we have $a(-z) \ge a(z)$ for all $z\ge 0$, and the assertion follows similarly.
\end{proof}

\begin{remark}
If we assume additionally that ${\mathbf A}$ is square, i.e., $M = N$, and the coefficients of the corresponding polynomial $a(z)$ of ${\mathbf A}$ in \cref{pola} 
are non-negative and monotonically decreasing, then 
it can be shown that there exists $\tilde{z}$ in $(0,1)$
that generates the optimal rank-1 Hankel matrix, and this value $\tilde{z}$ is the only positive zero of $Q(z)$ in \cref{theo3}. In this case we can find $\tilde{z}$ efficiently by employing a Newton method with starting value $z_{0}= 1$. 
\end{remark}

Finally, we want to answer the following question: Given a real matrix ${\mathbf A}$, can we restrict the search for the optimal parameters $c$ and $z$ to real numbers, or can we achieve better results by allowing complex parameters? The following example shows that indeed complex parameters may provide better approximations.

\begin{example} 
We want to find an optimal  rank-1 Hankel approximation for
\begin{equation*}
{\mathbf A} =
  \begin{pmatrix}
    1 & -\frac{1}{2} & -1 \\
    -\frac{1}{2} & -1  &-\frac{1}{2} \\
    -1 & -\frac{1}{2} & 1
  \end{pmatrix}.
\end{equation*}
This matrix has the eigenvalues $2.0$, $-1.366025$ and $0.366025$ and the Frobenius norm $\|{\mathbf A}\|_{F} = 2.449490$.
Using \cref{theo3} we find two solutions for the optimal real parameters, $(\tilde{z}, \, \tilde{c})=  (-0.129135, \, 1.063508)$ and  $(\tilde{z}, \, \tilde{c})= (-7.743849, 1.063508)$. The obtained Frobenius norm of the error is $\|{\mathbf A} - \tilde{c} \, \tilde{\mathbf z} \, \tilde{\mathbf z}^{T}\|_{F} = 2.206570$ (for both solutions).

If we allow $\tilde{c}$ and $\tilde{z}$ to be complex, we obtain with $(\tilde{z}, \, \tilde{c})  =({\mathrm i}, \, \frac{5}{3})$ as well as with $(\tilde{z}, \, \tilde{c})  =(-{\mathrm i}, \, \frac{5}{3})$ the smaller error  $\|{\mathbf A} - \tilde{c} \, \tilde{\mathbf z} \, \tilde{\mathbf z}^{T}\|_{F} = \frac{\sqrt{261}}{9} =1.7950055$.
\end{example}

\begin{remark}
For non-negative matrices ${\mathbf A}$ with $a_{0,0} \ge a_{M-1,N-1}$, there always exists an optimal rank-1 Hankel approximation with real non-negative parameters $\tilde{z}$ and $\tilde{c}$. In this case, the polynomial $a(z)$ in (\ref{pola}) has only non-negative coefficients $h_{\ell}$ and we obtain
\begin{equation*}
    |{\mathbf z}_{M}^{T} {\mathbf A} {\mathbf z}_{N}|=  \frac{\left|\sum\limits_{\ell=0}^{M+N-2} h_{\ell} z^{\ell}\right|}{p(z)} \le \frac{\sum\limits_{\ell=0}^{M+N-2} h_{\ell} |z|^{\ell}}{p(z)}
\end{equation*}
with $p(z)$ in \cref{theo3}.
This term can therefore be maximized by a real value $\tilde{z} \ge 0$. Then $\tilde{c} ={\mathbf z}_{M}^{T} {\mathbf A} {\mathbf z}_{N}$ is also real and non-negative.
\end{remark}

\section{Optimal Rank-1 Hankel Approximation in the Spectral Norm}
\label{sec:spec}

We consider now the minimization problem \cref{problem1} in the spectral norm, which is much more difficult to solve than  problem \cref{problem} for the Frobenius norm. 
Therefore, we restrict our considerations to real symmetric matrices ${\mathbf A} \in {\mathbb R}^{N \times N}$ 
and show how to obtain the real optimal rank-1 Hankel approximation $\mathbf{H}_{1}$ in this case.
\Cref{thm:rank-1_hankel}  implies that for a symmetric matrix ${\mathbf A} \in {\mathbb R}^{N \times N}$ we need to solve the minimization problem  
\begin{equation}\label{probs}
	\min_{c,z\in \overline{\mathbb R}} \left\|{\mathbf A}- c\, {\mathbf z} \, {\mathbf z}^{T}\right\|^2_2
\end{equation}
with $\overline{\mathbb R} = {\mathbb R} \cup \{\infty\}$ and ${\mathbf z} = {\mathbf z}_{N}$ as in \cref{z} with $\lim_{z \to \infty} {\mathbf z}_N = {\mathbf e}_{N}$.

The real symmetric matrix $\mathbf A$ possesses an eigendecomposition ${\mathbf A} = {\mathbf V} \, {\mathbf \Lambda} {\mathbf V}^{T}$ with an orthogonal matrix
${\mathbf V}$ whose columns ${\mathbf v}_{j}$ are the eigenvectors of ${\mathbf A}$, and with the diagonal matrix ${\mathbf  \Lambda}= \diag(\lambda_{0}, \ldots , \lambda_{N-1})$, where the real eigenvalues are ordered by modulus $|\lambda_{0}| \ge |\lambda_{1}| \ge |\lambda_{2}| \ge \ldots \ge |\lambda_{N-1}|$. Note that ${\mathbf A} \, {\mathbf v}_{j} = \lambda_{j} \, {\mathbf v}_{j}$ for $j=0, \ldots , N-1$, and $\| {\mathbf A}\|_{2} = |\lambda_{0}|$. Without loss of generality, we assume that $\lambda_{0} = |\lambda_{0}| >0$.

 Our goal is to find  necessary and sufficient conditions for optimal parameters $\tilde{z}$ and $\tilde{c}$ such that 
$\tilde{c} \, \tilde{\mathbf z} \, \tilde{\mathbf z}^{T}$ solves the minimization problem \cref{probs}. 
Let  $\tilde{c} \, \tilde{\mathbf z} \, \tilde{\mathbf z}^{T}$ denote an optimal solution of \cref{probs}, and let 
\begin{equation}\label{lati}
    \tilde{\lambda} \coloneqq \left\|{\mathbf A} - \tilde{c}\,  \tilde{\mathbf z} \, \tilde{\mathbf z}^{T} \right\|_{2}
\end{equation}
denote the corresponding optimal approximation error.
Thus the two matrices 
\begin{equation*}
    \tilde{\lambda}\, {\mathbf I}_{N}-{\mathbf A} + \tilde{c}\,  \tilde{\mathbf z} \, \tilde{\mathbf z}^{T} \qquad \textrm{ and } \qquad 
    \tilde{\lambda}\, {\mathbf I}_{N}- \tilde{c}\,  \tilde{\mathbf z} \, \tilde{\mathbf z}^{T} + {\mathbf A}
\end{equation*}
are positive semidefinite, and at least one of these two matrices is singular.
Let $\mubf \coloneqq {\mathbf V}^{T} \, \tilde{\mathbf z} = (\mu_{0}, \ldots , \mu_{N-1})^{T}$, i.e., $\mu_{j} = {\mathbf v}_{j}^{T} \tilde{\mathbf z}$, for $ j=0, \ldots , N-1$.
Then \cref{lati} implies that 
\begin{align}\label{m1}
    {\mathbf M}_{1}(\tilde{\lambda}) &\coloneqq {\mathbf V}^{T}(\tilde{\lambda}\, {\mathbf I}_{N}- {\mathbf A} + \tilde{c}\,  \tilde{\mathbf z} \, \tilde{\mathbf z}^{T}) {\mathbf V} = 
    \tilde{\lambda} \, {\mathbf I}_N - {\mathbf \Lambda} + \tilde{c} \,  \mubf \, \mubf^{T}, \\
    \label{m2}
    {\mathbf M}_{2}(\tilde{\lambda}) & \coloneqq {\mathbf V}^{T}(\tilde{\lambda}\, {\mathbf I}_{N}- \tilde{c}\,  \tilde{\mathbf z} \, \tilde{\mathbf z}^{T} + {\mathbf A}) {\mathbf V}=
    \tilde{\lambda} \, {\mathbf I}_N + {\mathbf \Lambda} - \tilde{c} \,  \mubf \, \mubf^{T} 
\end{align}
are positive semidefinite, and at least one of them possesses the eigenvalue $0$. 
Note that ${\mathbf M}_{1}(\tilde{\lambda})$ and ${\mathbf M}_{2}(\tilde{\lambda})$ have a special structure, namely a sum of a diagonal matrix and a rank-1 matrix. Therefore, we first investigate conditions for the definiteness of such matrices in \cref{defi}.  These observations will enable us to prove \cref{theospectral1}, which provides necessary and sufficient conditions for the optimal parameters $\tilde{z}$ and $\tilde{c}$ solving \cref{probs} in \cref{sec:app}.
\cref{theospectral1} in turn gives rise to an algorithm for computing the optimal rank-1 Hankel approximation with regard to the spectral norm, which is presented in \cref{last}.

\subsection{Definiteness of Diagonal-Plus-Rank-1-Matrices}
\label{defi}
The two matrices ${\mathbf M}_{1}(\tilde{\lambda})$ and ${\mathbf M}_{2}(\tilde{\lambda})$ in (\ref{m1}) and (\ref{m2}) are both of the form 
\begin{equation*}
    \mathbf B = \mathbf D + c\mathbf{bb}^T,
\end{equation*}
where $\mathbf D = \diag(d_0,\dots,d_{N-1})\in\mathbb R^{N\times N}$, $\mathbf b=(b_{0}, \ldots , b_{N-1})^{T}\in \mathbb R^N$, and $c\in\mathbb R$.
In view of ${\mathbf D} = \tilde{\lambda} {\mathbf I}_{N} + {\mathbf \Lambda}$  and ${\mathbf D} = \tilde{\lambda} {\mathbf I}_{N} - {\mathbf \Lambda}$ for ${\mathbf M}_{1}(\tilde{\lambda})$ and ${\mathbf M}_{2}(\tilde{\lambda})$, we 
 are especially interested in the two cases where either the fixed diagonal matrix ${\mathbf D}$ is positive semidefinite or  ${\mathbf D}$ possesses exactly one negative diagonal entry while all other diagonal entries are non-negative.
For these two cases, we derive necessary and sufficient conditions for ${\mathbf b}$ and $c$, such that $\mathbf B$ is positive semidefinite.
We start with an observation for the determinant of ${\mathbf B}$.

\begin{lemma}\label{lem1}
The matrix ${\mathbf B} \coloneqq {\mathbf D} + c \, {\mathbf b} {\mathbf b}^{T}$ has the determinant
\begin{equation*}
    \det ({\mathbf B}) =  \det({\mathbf D}) + c  \, \sum_{j=0}^{N-1} b_{j}^{2} \, \left(\prod_{\substack{k=0\\ k\neq j}}^{N-1} d_{k}\right).
\end{equation*}
If ${\mathbf D}$ is invertible, we have
\begin{equation*}
    \det ({\mathbf B}) = \det ({\mathbf D}) \left( 1+ c  \sum_{j=0}^{N-1} \frac{b_{j}^{2}}{d_{j}}\right).
\end{equation*}
\end{lemma}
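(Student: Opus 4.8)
The plan is to compute $\det(\mathbf B) = \det(\mathbf D + c\,\mathbf b\mathbf b^T)$ by expanding multilinearly in the columns, and then to specialize. Write $\mathbf D + c\,\mathbf b\mathbf b^T = \mathbf D + c\,\mathbf b\,(b_0,\dots,b_{N-1})$, so that the $j$-th column of $\mathbf B$ is $d_j\mathbf e_j + c\,b_j\,\mathbf b$, where $\mathbf e_j$ is the $j$-th standard basis vector. Using multilinearity of the determinant in each column, I would expand $\det(\mathbf B)$ as a sum of $2^N$ determinants, one for each choice, per column $j$, of either the summand $d_j\mathbf e_j$ or the summand $c\,b_j\,\mathbf b$. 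Any term in which two or more columns are taken to be multiples of $\mathbf b$ vanishes, since it contains two parallel columns. Hence only two kinds of terms survive: the term where every column contributes $d_j\mathbf e_j$, giving $\prod_{k=0}^{N-1} d_k = \det(\mathbf D)$; and, for each $j$, the term where column $j$ contributes $c\,b_j\,\mathbf b$ and every other column $k\neq j$ contributes $d_k\mathbf e_k$. The latter determinant is $c\,b_j\big(\prod_{k\neq j} d_k\big)$ times the determinant of the matrix whose $j$-th column is $\mathbf b$ and whose other columns are $\mathbf e_k$ ($k\neq j$); expanding that determinant along the columns $\mathbf e_k$ leaves exactly the $j$-th entry $b_j$ of $\mathbf b$, so the term equals $c\,b_j^2\big(\prod_{k\neq j} d_k\big)$. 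Summing gives the first formula
\begin{equation*}
    \det(\mathbf B) = \det(\mathbf D) + c\sum_{j=0}^{N-1} b_j^2 \left(\prod_{\substack{k=0\\k\neq j}}^{N-1} d_k\right).
\end{equation*}

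For the second formula, assume $\mathbf D$ is invertible, i.e.\ $d_k\neq 0$ for all $k$. Then $\prod_{k\neq j} d_k = \det(\mathbf D)/d_j$, and factoring $\det(\mathbf D)$ out of the sum yields
\begin{equation*}
    \det(\mathbf B) = \det(\mathbf D)\left(1 + c\sum_{j=0}^{N-1}\frac{b_j^2}{d_j}\right),
\end{equation*}
as claimed. Alternatively, under invertibility one can derive the same identity directly from the matrix determinant lemma $\det(\mathbf D + c\,\mathbf b\mathbf b^T) = \det(\mathbf D)\,(1 + c\,\mathbf b^T\mathbf D^{-1}\mathbf b)$, but I prefer the column-expansion argument because it simultaneously yields the first (unconditional) formula, which is the one needed when $\mathbf D$ has a zero diagonal entry.

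I do not anticipate a genuine obstacle here; the only point requiring a little care is the bookkeeping in the multilinear expansion — specifically, justifying that all terms with two or more $\mathbf b$-columns vanish and correctly identifying the surviving single-$\mathbf b$-column term as $c\,b_j^2\prod_{k\neq j}d_k$ rather than, say, picking up a spurious sign from reordering columns. Since the $\mathbf b$-column sits in the original position $j$ and the remaining columns are the standard basis vectors in their original positions, no permutation sign arises, and the cofactor expansion along those standard basis columns is immediate. This makes the computation entirely routine.
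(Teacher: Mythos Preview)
Your proof is correct, but it takes a different route from the paper's. The paper embeds $\mathbf B$ in the $(N+1)\times(N+1)$ block matrix
\[
\begin{pmatrix}\mathbf D & -\mathbf b\\ c\,\mathbf b^T & 1\end{pmatrix},
\]
uses the Schur-complement identity to identify its determinant with $\det(\mathbf B)$, and then expands that determinant along the last column to obtain the formula. Your multilinear column expansion is more self-contained: it avoids the block-determinant identity entirely and works directly with the $N\times N$ matrix, at the cost of a small bookkeeping argument about which of the $2^N$ terms survive. Both approaches give the unconditional formula (valid even when some $d_k=0$) in one stroke, and the invertible case then follows by the same factoring in either argument.
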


\begin{proof}
We employ the rule for computing determinants of block matrices, see \cite{Silvester00},
\begin{equation*}
    \det \begin{pmatrix}
             \mathbf{D} & -\mathbf{b} \\
             c \mathbf{b}^T & 1
         \end{pmatrix}
    = \det (1 \cdot {\mathbf D}+ c \, {\mathbf b} \, {\mathbf b}^{T}) = \det({\mathbf B}) ,
\end{equation*}
and use an expansion of the determinant with respect to the last column. For the case where $\mathbf D$ is invertible, see also \cite{Demmel1997}.
\end{proof} 

For the remainder of Section \ref{sec:spec}, we use the convention that in the case $b_{j}= d_{j}=0$ the term $\frac{b_{j}^{2}}{d_{j}}$ in the sum $\sum\limits_{k=0}^{N-1} \frac{b_{k}^{2}}{d_{k}}$ is just omitted, and to remind the reader that such terms may occur in the sum, we will use the notation $\sumprime_{}^{}$ instead of $\sum$.

\begin{lemma}\label{lem2}
    Let $N\geq2$,  $c>0$, and assume that $\mathbf D$ has only one negative eigenvalue, i.e., $d_0 < 0$ and $d_j \geq 0$ for $j=1, \ldots, N-1$. Then the matrix $\mathbf B = \mathbf D + c\mathbf{bb}^T$ is positive semidefinite if and only if $\mathbf b$ and $c$ satisfy
    \begin{equation}\label{lem21}
        \sumprime_{j=0}^{N-1} \frac{b_j^{2}}{(-d_j)} \ge \frac{1}{c},
    \end{equation}
    where $b_j=0$ whenever $d_j=0$.
    Moreover, if $d_j>0$ for $j\geq1$ and the inequality \cref{lem21} is strict, then $\mathbf B$ is  positive definite.
\end{lemma}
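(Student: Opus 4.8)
The plan is to characterize positive semidefiniteness of $\mathbf B = \mathbf D + c\,\mathbf{bb}^T$ via its quadratic form and, where possible, via determinants of principal submatrices, handling separately the coordinates where $d_j = 0$. First I would dispose of the degenerate coordinates: if $d_j = 0$ for some $j \geq 1$, then the $2\times 2$ principal submatrix of $\mathbf B$ indexed by $\{0,j\}$ is $\left(\begin{smallmatrix} d_0 + cb_0^2 & cb_0b_j \\ cb_0b_j & cb_j^2\end{smallmatrix}\right)$, whose determinant is $cb_j^2 d_0 \leq 0$; since $d_0 < 0$ and $c > 0$, positive semidefiniteness forces $b_j = 0$, which is exactly the stated side condition. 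So assume from now on $b_j = 0$ whenever $d_j = 0$; those coordinates contribute a harmless zero block and can be removed, reducing to the case where $d_j > 0$ for all $j \geq 1$ (and $d_0 < 0$), where $\mathbf D$ is invertible.

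In the invertible case I would argue directly with the quadratic form. For $\mathbf x = (x_0,\dots,x_{N-1})^T$, write $\mathbf x^T \mathbf B \mathbf x = \sum_j d_j x_j^2 + c\,(\mathbf b^T\mathbf x)^2$. The only way this can be negative is to exploit the single negative term $d_0 x_0^2$; for fixed value of $t \coloneqq \mathbf b^T \mathbf x$ and fixed $x_0$, the remaining variables $x_1,\dots,x_{N-1}$ should be chosen to minimize $\sum_{j\geq1} d_j x_j^2$ subject to $\sum_{j\geq1} b_j x_j = t - b_0 x_0$. This is a standard constrained minimization (Lagrange multipliers, or Cauchy–Schwarz in the inner product $\langle u,v\rangle = \sum_{j\geq1} d_j u_j v_j$), giving minimum value $(t-b_0x_0)^2 / \sum_{j\geq1} (b_j^2/d_j)$. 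Substituting back, $\mathbf B$ is positive semidefinite if and only if
\begin{equation*}
    d_0 x_0^2 + \frac{(t-b_0x_0)^2}{\sum_{j\geq1} b_j^2/d_j} + c\,t^2 \geq 0 \qquad \text{for all } x_0, t \in \mathbb R.
\end{equation*}
This is a quadratic form in $(x_0,t)$; it is PSD iff its $2\times 2$ matrix is PSD, i.e. iff $d_0 + \big(\sum_{j\geq1} b_j^2/d_j\big)^{-1} \cdot(\text{something})$... more cleanly, I would just compute the $2\times2$ determinant and trace. The determinant works out (after clearing the positive factor $\sum_{j\geq1} b_j^2/d_j$) to a positive multiple of $1 + c\sum_{j=0}^{N-1} b_j^2/d_j$, which by Lemma~\ref{lem1} is $\det(\mathbf B)/\det(\mathbf D)$; since $\det(\mathbf D) < 0$, requiring this $\leq 0$ is exactly $c\sum_{j\geq1} b_j^2/d_j \geq 1 + c\,b_0^2/d_0$, equivalently $\sum_{j\geq1} b_j^2/d_j - b_0^2/d_0 \geq 1/c$, which (using $d_0 < 0$) is precisely $\sum_{j=0}^{N-1} b_j^2/(-d_j) \geq 1/c$, the claimed inequality~\eqref{lem21}. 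One must also check the trace/diagonal sign condition on the $2\times2$ matrix, but its $(t,t)$-entry is $c + \big(\sum_{j\geq1}b_j^2/d_j\big)^{-1} > 0$, so PSD of the $2\times2$ form is equivalent to the determinant being $\geq 0$, and the argument closes.

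For the last sentence, when $d_j > 0$ for all $j\geq1$ and \eqref{lem21} is strict, the $2\times 2$ determinant above is strictly positive, so the reduced quadratic form in $(x_0,t)$ is positive definite; tracing back through the (now bijective, since $\mathbf D$ invertible and the constraint map surjective) reduction shows $\mathbf x^T\mathbf B\mathbf x > 0$ for all $\mathbf x \neq 0$, hence $\mathbf B$ is positive definite. The main obstacle I anticipate is bookkeeping around the degenerate coordinates and the $\tfrac00$ convention — making the reduction to the invertible case fully rigorous, and being careful that when $\sum_{j\geq1} b_j^2/d_j = 0$ (all such $b_j$ vanish) the "minimization" step degenerates and must be read as forcing $b_0 x_0 = t$, which is a limiting/edge case of the same determinant computation; everything else is routine linear algebra via Lemma~\ref{lem1}.
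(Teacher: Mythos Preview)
Your proposal is essentially correct and reaches the same inequality, but by a different route than the paper. After the identical first step (the $2\times 2$ principal minor on $\{0,j\}$ forces $b_j=0$ when $d_j=0$), the paper invokes the characterization of positive semidefiniteness via \emph{all} principal minors and uses Lemma~\ref{lem1} to evaluate $\det(\mathbf B_J)$ for the set $J$ of indices with $d_j\neq 0$, asserting that this single determinant condition already controls all smaller principal minors. You instead analyze the quadratic form directly: fixing $x_0$ and $t=\mathbf b^T\mathbf x$, you minimize the positive-definite part $\sum_{j\ge 1}d_jx_j^2$ under the linear constraint, reducing to a $2\times 2$ form in $(x_0,t)$ whose positivity is equivalent to the claim. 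Your approach is more self-contained (no appeal to the all-principal-minors criterion) and makes the positive-definite case immediate; the paper's approach is shorter but relies on an unproven assertion that the remaining principal minors are automatically nonnegative. Two small cleanups: the $2\times 2$ determinant is $d_0/S$ times $1+c\sum_j b_j^2/d_j$, hence a \emph{negative} multiple (you wrote ``positive''), which is exactly why its nonnegativity forces $1+c\sum_j b_j^2/d_j\le 0$; and the reduction $\mathbf x\mapsto(x_0,t)$ is not bijective---for the strict inequality/positive-definite statement you should note separately that when $(x_0,t)=(0,0)$ but $\mathbf x\neq 0$ one has $\mathbf x^T\mathbf B\mathbf x=\sum_{j\ge1}d_jx_j^2>0$.
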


\begin{proof}
According to \cite{Prussing86}, the matrix $\mathbf B$ is positive semidefinite if and only if all its principal minors, i.e., the determinants of all possible $r\times r$ principal submatrices of ${\mathbf B}$ for $r=1, \ldots , N$, are non-negative. We observe that all these principal submatrices of $\mathbf B$ are of the same form as $\mathbf B$. 
For  all $j \in \{1, \ldots , N-1\}$ with $d_{j}=0$, we consider the $(2\times2)$ submatrices $\mathbf B_{\{0,j\}}$ obtained from the first and $j$-th row and column of $\mathbf B$. The determinant of $\mathbf B_{\{0,j\}}$ is given by
    \begin{equation*}
        \det (\mathbf B_{\{0,j\}}) = \textrm{det}
        \begin{pmatrix}
            d_{0}+ c \, b_{0}^{2} & c\, b_{0} \, b_{j} \\ c\, b_{0} \, b_{j} & c \, b_{j}^{2}
        \end{pmatrix}
        = cb_j^2 d_0,
    \end{equation*}
    which is negative as long as $b_j\neq0$. Hence, the condition $ \det(\mathbf B_{\{0,j\}}) \ge 0$ implies that $b_j=0$ for all indices $j$ with $d_j=0$.
    Now let $J$ be the index set containing all indices corresponding to non-zero entries of $\mathbf D$. Observe that by assumption $0\in J$. Denote by $\mathbf B_J$ and $\mathbf D_J$ the corresponding principal submatrices of $\mathbf B$ and $\mathbf D$, respectively. By \cref{lem1}, we have
    \begin{equation*}
        \det ({\mathbf B_J}) = \det ({\mathbf D_J}) \left( 1+ c  \sum_{j\in J} \frac{b_{j}^{2}}{d_{j}}\right),
    \end{equation*}
    where $\det(\mathbf D_J) < 0$. Thus, the condition $\det (\mathbf B_J) \ge 0$ is equivalent to
    \begin{equation*}
        \left( 1+ c  \sum_{j\in J} \frac{b_{j}^{2}}{d_{j}}\right) \leq 0 
        \quad \Leftrightarrow \quad
        \sum_{j\in J} \frac{b_j^2}{(-d_j)} \geq \frac1c.
    \end{equation*}
  These conditions are already sufficient for all principal minors of $\mathbf B$ corresponding to subsets of $J$ to be non-negative.
    By adding the zero terms corresponding to indices not in $J$ to the above inequality, the first claim follows. \\
    If $d_j>0$ for $j\geq1$ and the inequality (\ref{lem21}) is strict, then already all leading principal minors of $\mathbf B$ are positive, which is equivalent to $\mathbf B$ being positive definite.
\end{proof}
\begin{lemma}\label{lem3}
    Let $N\geq2$, $c>0$, and assume that  $\mathbf D$ is positive semidefinite with at least one positive eigenvalue, i.e., $d_0>0$ and $d_j\geq0$ for $j=1, \ldots, N-1$. Then the matrix $\mathbf B = \mathbf D - c\mathbf{bb}^T$ is positive semidefinite if and only if $\mathbf b$ and $c$ satisfy
    \begin{equation}\label{lem31}
        \sumprime_{j=0}^{N-1} \frac{b_j^2}{d_j} \leq \frac{1}{c},
    \end{equation}
    where $b_j=0$ whenever $d_j=0$.
    Moreover, if $d_j>0$ for all $j$ and the inequality \cref{lem31} is strict, then $\mathbf B$ is positive definite.
\end{lemma}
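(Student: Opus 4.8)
The plan is to mirror the proof of \cref{lem2}, now with the rank-one term carrying the negative coefficient $-c$ and with the roles of the diagonal matrices interchanged. The tool is again the criterion from \cite{Prussing86}: $\mathbf B$ is positive semidefinite if and only if every principal minor is non-negative, and positive definite if and only if every leading principal minor is positive. The key structural observation, as before, is that every principal submatrix of $\mathbf B = \mathbf D - c\,\mathbf b\mathbf b^T$ is itself a diagonal-plus-rank-one matrix, so \cref{lem1} is available for each of them.

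First I would prove the necessity of $b_j = 0$ whenever $d_j = 0$. For such an index $j$ (necessarily $j \neq 0$, since $d_0 > 0$), the $2\times 2$ principal submatrix on rows and columns $\{0,j\}$ has determinant $(d_0 - c b_0^2)(-c b_j^2) - c^2 b_0^2 b_j^2 = -c\, d_0\, b_j^2$, which is strictly negative when $b_j \neq 0$ because $c,d_0 > 0$; hence non-negativity of this minor forces $b_j = 0$. Next, let $J = \{\, j : d_j \neq 0 \,\}$, so that $0 \in J$ and $\mathbf D_J$ is positive definite. By \cref{lem1},
\[
  \det(\mathbf B_J) = \det(\mathbf D_J)\Bigl(1 - c \sum_{j\in J} \tfrac{b_j^2}{d_j}\Bigr),
\]
and since $\det(\mathbf D_J) > 0$, the condition $\det(\mathbf B_J) \ge 0$ is equivalent to $\sum_{j\in J} b_j^2/d_j \le 1/c$; appending the zero terms $b_j = 0$ for $j\notin J$ turns this into \cref{lem31}, giving necessity.

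For sufficiency I would verify that \cref{lem31} together with $b_j = 0$ for $d_j = 0$ makes every principal minor non-negative. Fix $S \subseteq \{0,\dots,N-1\}$. If $S$ contains an index $j$ with $d_j = 0$, then both $\mathbf D$ and $\mathbf b$ vanish in that coordinate, so row and column $j$ of $\mathbf B_S$ are zero and $\det(\mathbf B_S) = 0$. Otherwise $S \subseteq J$ and, again by \cref{lem1},
\[
  \det(\mathbf B_S) = \det(\mathbf D_S)\Bigl(1 - c \sum_{j\in S} \tfrac{b_j^2}{d_j}\Bigr) \ge 0,
\]
since $\det(\mathbf D_S) > 0$ and $\sum_{j\in S} b_j^2/d_j \le \sum_{j\in J} b_j^2/d_j \le 1/c$, using that all summands are non-negative. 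Hence $\mathbf B$ is positive semidefinite. For the ``moreover'' part, when all $d_j > 0$ we have $J = \{0,\dots,N-1\}$, and if \cref{lem31} is strict then for every $r$ the leading principal minor satisfies $\det(\mathbf B_{[r]}) = \det(\mathbf D_{[r]})\bigl(1 - c\sum_{j<r} b_j^2/d_j\bigr) > 0$, because $\sum_{j<r} b_j^2/d_j \le \sum_j b_j^2/d_j < 1/c$; all leading principal minors being positive, $\mathbf B$ is positive definite.

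The only genuinely delicate step, exactly as in \cref{lem2}, is the passage from the single determinant identity on the index set $J$ to the non-negativity of \emph{all} principal minors: this uses both the vanishing of the rows indexed by zero diagonal entries and the monotonicity of the partial sums $\sum_{j\in S} b_j^2/d_j$ in $S$, which in turn relies on $c>0$ so that these terms are non-negative. The rest is the bookkeeping already done for \cref{lem2}, with the sign of the rank-one perturbation reversed.
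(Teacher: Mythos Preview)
Your proof is correct and follows essentially the same route as the paper's own proof: both establish necessity via the $2\times 2$ minors $\mathbf B_{\{0,j\}}$ and the full-$J$ minor computed with \cref{lem1}, and both derive positive definiteness from positivity of all leading principal minors. If anything, your sufficiency argument is more explicit than the paper's, which simply asserts that the two conditions already force all principal minors to be non-negative without spelling out the case split $S\cap(J^c)\neq\emptyset$ versus $S\subseteq J$.
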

\begin{proof}
As in the proof of \cref{lem2} we study the principal minors of $\mathbf B$. For all indices $j$ with $d_{j}=0$ we consider the $(2\times2)$ submatrices $\mathbf B_{\{0,j\}}$ obtained by the first and $j$-th row and column of $\mathbf B$. We find
    \begin{equation*}
        \det (\mathbf B_{\{0,j\}}) = \textrm{det}
        \begin{pmatrix}
            d_{0}- c \, b_{0}^{2} & -c\, b_{0} \, b_{j} \\ -c\, b_{0} \, b_{j} & -c \, b_{j}^{2}
        \end{pmatrix}
        = -cb_j^2 d_0,
    \end{equation*}
 and $\det (\mathbf B_{\{0,j\}}) \ge 0$ implies that $b_j=0$.
    Let again $J$ be the index set containing all indices corresponding to non-zero entries of $\mathbf D$, and denote by $\mathbf B_J$ and $\mathbf D_J$ the corresponding principal submatrices of $\mathbf B$ and $\mathbf D$, respectively. By \cref{lem1}, we have
    \begin{equation*}
        \det (\mathbf B_J) = \det (\mathbf D_J) \left( 1- c  \sum_{j\in J} \frac{b_{j}^{2}}{d_{j}}\right),
    \end{equation*}
    where $\det (\mathbf D_J) > 0$. Thus, $\det (\mathbf B_J) \ge 0$  is equivalent to
    \begin{equation*}
        \left( 1- c  \sum_{j\in J} \frac{b_{j}^{2}}{d_{j}}\right) \geq 0 
        \quad \Leftrightarrow \quad
        \sum_{j\in J} \frac{b_j^2}{d_j} \leq \frac1c.
    \end{equation*}
    These conditions are already sufficient for all principal minors of $\mathbf B$ to be non-negative.
    By adding the zero terms corresponding to indices not in $J$ to the above inequality, the first claim follows. 
If $d_j>0$ for all $j$ and the inequality (\ref{lem31}) is strict, then all leading principal minors are positive and thus $\mathbf B$ is positive definite.
\end{proof}

\subsection{The Optimal Approximation Error in the Spectral Norm}
\label{sec:app}
First, we present upper and lower bounds for the optimal rank-1 Hankel approximation error $\tilde{\lambda}$ in (\ref{lati}), where we assume $\lambda_{0} = \|{\mathbf A}\|_{2} > |\lambda_{1}|$, i.e. $\lambda_{0}$ is a single singular value of ${\mathbf A}$. The case $\lambda_{0} =|\lambda_{1}|$ will be treated separately.
\begin{proposition}\label{bounds}
    Let $\tilde\lambda = \left\|{\mathbf A} - \tilde{c}\,  \tilde{\mathbf z} \, \tilde{\mathbf z}^{T} \right\|_{2}$ be the optimal rank-1 Hankel approximation error in $(\ref{lati})$. Then we have
    \begin{equation*}
        \abs{\lambda_1} \leq \tilde\lambda < \lambda_0,
    \end{equation*}
    where $\lambda_0$ and $\lambda_1$ are the largest and (by modulus) second largest eigenvalue of $\mathbf A$.
\end{proposition}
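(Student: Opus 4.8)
The plan is to prove the two inequalities separately. For the lower bound $|\lambda_1|\le\tilde\lambda$ I will use an Eckart--Young type subspace argument that works for \emph{any} rank-1 matrix subtracted from $\mathbf A$, while for the strict upper bound $\tilde\lambda<\lambda_0$ I will exhibit one concrete rank-1 Hankel matrix $c\,\mathbf z\,\mathbf z^T$ with $\|\mathbf A-c\,\mathbf z\,\mathbf z^T\|_2<\lambda_0$.

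For the lower bound, let $\mathbf R=c\,\mathbf z\,\mathbf z^T$ be any rank-1 Hankel matrix with $\mathbf z$ a unit vector (either $\mathbf z=\mathbf z_N(z)$ as in \cref{z} or the limiting case $\mathbf z=\mathbf e_N$). Set $W\coloneqq\mathrm{span}\{\mathbf v_0,\mathbf v_1\}$; since $N\ge2$ and $\mathbf v_0\perp\mathbf v_1$ we have $\dim W=2$, and since $\ker\mathbf R=\{\mathbf x:\mathbf z^T\mathbf x=0\}$ has dimension $N-1$, the intersection $W\cap\ker\mathbf R$ is nontrivial and hence contains a unit vector $\mathbf x=\alpha\mathbf v_0+\beta\mathbf v_1$ with $\alpha^2+\beta^2=1$. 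Then $(\mathbf A-\mathbf R)\mathbf x=\mathbf A\mathbf x=\alpha\lambda_0\mathbf v_0+\beta\lambda_1\mathbf v_1$, so $\|(\mathbf A-\mathbf R)\mathbf x\|_2^2=\alpha^2\lambda_0^2+\beta^2\lambda_1^2\ge\lambda_1^2$, which gives $\|\mathbf A-\mathbf R\|_2\ge|\lambda_1|$. Applying this to the optimal $\mathbf R$ yields $\tilde\lambda\ge|\lambda_1|$.

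For the upper bound, I first pick $z_0\in\mathbb R$ with $\mathbf v_0^T\mathbf z_N(z_0)\neq0$, which is possible because otherwise the polynomial $\sum_{j=0}^{N-1}(\mathbf v_0)_j z^j$ would vanish identically and force $\mathbf v_0=\mathbf 0$. With $\mathbf z\coloneqq\mathbf z_N(z_0)$ and $\mubf\coloneqq\mathbf V^T\mathbf z$ we have $\mu_0\neq0$ and $\|\mubf\|_2=\|\mathbf z\|_2=1$, and $\mathbf A-c\,\mathbf z\,\mathbf z^T$ is orthogonally similar to $\mathbf\Lambda-c\,\mubf\mubf^T$ for every $c$. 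I then show that all eigenvalues of $\mathbf\Lambda-c\,\mubf\mubf^T$ lie in $(-\lambda_0,\lambda_0)$ once $c>0$ is small. For the largest eigenvalue, if $\max_{\|\mathbf x\|_2=1}\bigl(\mathbf x^T\mathbf\Lambda\mathbf x-c(\mathbf x^T\mubf)^2\bigr)=\lambda_0$, then a maximizer $\mathbf x^*$ must satisfy both $\mathbf x^{*T}\mathbf\Lambda\mathbf x^*=\lambda_0$ and $\mathbf x^{*T}\mubf=0$; since $\lambda_0>|\lambda_1|\ge\lambda_j$ for $j\ge1$, the first equation forces $\mathbf x^*=\pm\mathbf e_0$ (in the eigenbasis), contradicting $\mathbf x^{*T}\mubf=\pm\mu_0\neq0$. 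Hence $\lambda_{\max}(\mathbf\Lambda-c\,\mubf\mubf^T)<\lambda_0$ for all $c>0$. For the smallest eigenvalue, Weyl's inequality gives $\lambda_{\min}(\mathbf\Lambda-c\,\mubf\mubf^T)\ge\lambda_{\min}(\mathbf\Lambda)-c$, and $\lambda_0>|\lambda_1|$ forces $\lambda_j>-\lambda_0$ for all $j$, so $\delta\coloneqq\lambda_{\min}(\mathbf\Lambda)+\lambda_0>0$ and $\lambda_{\min}(\mathbf\Lambda-c\,\mubf\mubf^T)>-\lambda_0$ whenever $0<c<\delta$. Therefore $\|\mathbf A-c\,\mathbf z\,\mathbf z^T\|_2=\|\mathbf\Lambda-c\,\mubf\mubf^T\|_2<\lambda_0$ for such $c$, and thus $\tilde\lambda<\lambda_0$.

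The individual estimates are routine; the only points requiring care are that the upper bound genuinely uses $\lambda_0>|\lambda_1|$ twice (to make $\lambda_0$ simple, so the maximizer is $\pm\mathbf e_0$, and to guarantee $\lambda_{\min}(\mathbf\Lambda)>-\lambda_0$), and that the perturbing matrix must use a small $c>0$ and cannot be taken with the Frobenius-optimal constant $\mathbf z^T\mathbf A\mathbf z$, which may be zero for symmetric $\mathbf A$. The boundary case $\lambda_0=|\lambda_1|$ is deliberately excluded here and dealt with separately.
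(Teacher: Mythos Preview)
Your proof is correct. The lower bound is exactly the Eckart--Young--Mirsky argument the paper invokes, with the subspace details spelled out. For the strict upper bound both you and the paper exhibit a concrete rank-1 Hankel matrix with error $<\lambda_0$ by choosing $\mathbf z$ with $\mu_0=\mathbf v_0^T\mathbf z\neq0$ and a small positive $c$, but the execution differs slightly: the paper works in the $\mathbf M_1(\tilde\lambda),\,\mathbf M_2(\tilde\lambda)$ framework set up for the main theorem, takes the specific value $\tilde c=(\lambda_0-|\lambda_1|)/2$, and shows both matrices are positive definite via the determinant formula of \cref{lem1} and Weyl's inequality, respectively. Your route is more self-contained: you bound $\lambda_{\max}(\mathbf\Lambda-c\,\mubf\mubf^T)<\lambda_0$ for \emph{all} $c>0$ by a direct Rayleigh-quotient argument exploiting the simplicity of $\lambda_0$, and bound $\lambda_{\min}$ by Weyl for small $c$. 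This avoids the determinant lemma at the price of not tying into the $\mathbf M_1,\mathbf M_2$ machinery used later in \cref{theospectral1}; either way the content is the same.
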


\begin{proof}
By the Eckart-Young-Mirsky theorem it follows immediately that $\tilde{\lambda} \ge |\lambda_{1}|$.
We prove that the optimal error $\tilde{\lambda}$ satisfies $\tilde{\lambda} < \lambda_{0}$. Assume by contrast that $\tilde{\lambda} = \lambda_{0}$.
We show that there exist parameters $\tilde{z}$ and $\tilde{c}$ such that ${\mathbf M}_{1}(\tilde{\lambda})$ and ${\mathbf M}_{2}(\tilde{\lambda})$ in (\ref{m1}) and (\ref{m2}) are both positive definite, which leads to a contradiction, since for the optimal parameters $\tilde{z}$, $\tilde{c}$, at least one of these two matrices has to be singular.
Choose for example some $\tilde{z}\in {\mathbb R}$ such that $\mu_{0} = {\mathbf v}_{0}^{T} \tilde{\mathbf z} \neq 0$, and choose $\tilde{c} = \frac{\lambda_{0} - |\lambda_{1}|}{2} 
$.
Then  ${\mathbf M}_{1}(\tilde{\lambda})= (\lambda_{0} {\mathbf I}_{N} - {\mathbf \Lambda}) + \tilde{c} \, \mubf \, \mubf^{T}$ is a sum of two positive semidefinite matrices, and Lemma \ref{lem1} yields
\begin{equation*}
    \det {\mathbf M}_{1}(\tilde{\lambda}) = \tilde{c} \sum_{j=0}^{N-1} \mu_{j}^{2} \, \Bigg(\prod_{\substack{k=0\\ k\neq j}}^{N-1} (\lambda_{0} - \lambda_{k})\Bigg) >0.
\end{equation*}
Further, ${\mathbf M}_{2}(\tilde{\lambda})= (\lambda_{0} {\mathbf I}_{N} + {\mathbf \Lambda}) - \tilde{c} \, \mubf \, \mubf^{T}$ is a 
difference of a positive definite matrix $(\lambda_{0} {\mathbf I}_{N} + {\mathbf \Lambda})$ 
with smallest possible eigenvalue $\lambda_{0} - |\lambda_{1}|>0$ and a positive semidefinite
rank-1 matrix $\tilde{c} \, \mubf \, \mubf^{T}$ with nonzero 
eigenvalue $\tilde{c} \,  \| \mubf\|_{2}^{2} = \tilde{c} = \frac{\lambda_{0} - |
\lambda_{1}|}{2}$. Thus, Weyl's interlacing inequality, see Corollary 4.3.3 in 
\cite{Horn85},  implies that ${\mathbf M}_{1}(\tilde{\lambda})$ is positive 
definite, since we again observe that $\det {\mathbf M}_{1}(\tilde{\lambda}) >0$ using Lemma \ref{lem1}.
\end{proof}
In order to state our main theorem, we introduce the following function. Recalling that $\lim\limits _{z \to \infty}  {\mathbf z} = {\mathbf e}_{N}$  let for $z \in \overline{\mathbb R} \coloneqq {\mathbb R} \cup \{\infty\}$ and $\lambda^2 \in [\lambda_1^2, \lambda_0^2)$,
\begin{equation}\label{f}
    f(z, \lambda^{2}) \coloneqq  \sumprime_{j=0}^{N-1}
    \frac{({\mathbf v}_{j}^{T} {\mathbf z})^{2}}{\lambda_{j}^{2} - \lambda^{2}} = \sumprime_{j=0}^{N-1}
    \frac{\mu_{j}^{2}}{\lambda_{j}^{2} - \lambda^{2}}
    ={\mathbf z}^{T} ({\mathbf A}^{2} - \lambda^{2} {\mathbf I})^{-1} {\mathbf z} .
\end{equation}
The last equality in \cref{f} follows with ${\mathbf z} = \sum\limits_{j=0}^{N-1} ({\mathbf v}_j^T {\mathbf z}) \, {\mathbf v}_j$ from the observations
\begin{equation*}
{\mathbf z}^{T} \left( {\mathbf A}^{2} - \lambda^{2} {\mathbf I}_{N} \right)^{-1} {\mathbf z} = \sum_{j=0}^{N-1} \left( {\mathbf v}_j^T {\mathbf z} \right)^2 \left( {\mathbf v}_j^T \left( {\mathbf A}^2 - \lambda^2 {\mathbf I} \right)^{-1} {\mathbf v}_j \right)
\end{equation*}
and $(\lambda_j^2- \lambda^2)^{-1} = {\mathbf v}_j^T ({\mathbf A}^2 - \lambda^2 {\mathbf I}_{N})^{-1} {\mathbf v}_j$.
For $\lambda^{2} \in (\lambda_{1}^{2}, \lambda_{0}^{2})$ and $z \in \overline{\mathbb R}$, $f(z, \lambda^{2})$ is well defined and bounded from below and above by the smallest and largest eigenvalue of $({\mathbf A}^{2} - \lambda^{2} \, {\mathbf I}_{N})^{-1}$, respectively. 
More exactly, we have
\begin{equation*}
    \min \left\{ (\lambda^{2}-\lambda_{N-1}^{2})^{-1}, (\lambda_{0}^{2} - \lambda^{2})^{-1} \right\}
    \le f(z, \lambda^{2}) \le
    \max \left\{ (\lambda_{0}^{2} - \lambda^{2})^{-1}, \, (\lambda^{2} - \lambda_{N-1}^{2})^{-1} \right\},
\end{equation*}
since it can be seen as a Rayleigh quotient for the matrix $({\mathbf A}^2 - \lambda^{2} {\mathbf I})^{-1}$.
Note, that for fixed $z\in\overline{\mathbb R}$, $f(z, \lambda^{2})$ is strictly monotonically increasing.
Moreover, for any fixed $z \in \overline{\mathbb R}$, 
$ f(z, \lambda_{0}^{2}) \coloneqq \lim_{\lambda^{2} \to \lambda_{0}^{2}} f(z, \lambda^{2})$ is bounded if and only if ${\mathbf v}_{0}^{T} {\mathbf z} =0$ (or ${\mathbf v}_{0}^{T} {\mathbf e}_{N} =0$ in case of $z=\infty$).
Similarly, $ f(z, \lambda_{1}^{2}) \coloneqq \lim_{\lambda^{2} \to \lambda_{1}^{2}} f(z, \lambda^{2})$ is bounded if and only if ${\mathbf v}_{j}^{T} {\mathbf z} =0$ (or ${\mathbf v}_{j}^{T} {\mathbf e}_{N} =0$ in case of $z=\infty$) for all ${\mathbf v}_{j}$ corresponding to eigenvalues $\lambda_{j}$ with $\lambda_{j}^{2} = \lambda_{1}^{2}$. In this case, we can extend the domain of $f$ to $[|\lambda_{1}|, \lambda_{0}]$ and use the convention $\frac00=0$ and the notation $\sumprime_{}^{}$ as before, if such terms terms occur in the sum.

The main theorem of this section contains two parts. The first part generalizes the result from \cite{Antoulas97} and states exact conditions ensuring that the rank-1 Hankel approximation achieves the same error as the unstructured rank-1 approximation. This error is given by $|\lambda_{1}|$ and achieved e.g.\ by truncated SVD. The second part of the theorem states necessary and sufficient conditions for the optimal parameters $ \tilde{z}$ and $\tilde{c}$ that enable us to derive an algorithm to compute the exact optimal rank-1 Hankel approximation if the error $|\lambda_{1}|$ cannot be achieved.

\begin{theorem}\label{theospectral1}
Let ${\mathbf A} =(a_{j,k})_{j,k=0}^{N-1,N-1}\in {\mathbb R}^{N \times N}$ be symmetric with $N \ge 2$.
Assume that $\rank({\mathbf A}) > 1$ and $\lambda_{0} = \|{\mathbf A}\|_{2} > |\lambda_{1}|$.
Let  the  optimal rank-1 Hankel approximation of ${\mathbf A}$ with regard to the spectral norm be of the form $\mathbf{H}_{1} = \tilde{c} \, \tilde{\mathbf z} \, \tilde{\mathbf z}^{T}$ with
 $   \tilde{\mathbf z} = {\Big(\sum\limits_{k=0}^{N-1} \tilde{z}^{2k}\Big)^{-1/2}}
    (1, \tilde{z}, \ldots , \tilde{z}^{N-1})^{T}$    
for $\tilde{z} \in {\mathbb R}$ or $\tilde{\mathbf z} = {\mathbf e}_{N}$ for $\tilde{z} = \infty$.

    (1) The optimal error bound $\| {\mathbf A} - {\mathbf H}_{1}\|_{2}^{2} = \| {\mathbf A} - \tilde{c} \, \tilde{\mathbf z} \, \tilde{\mathbf z}^{T}\|_{2}^{2} = \lambda_{1}^{2}$ is achieved if and only if there exists $\tilde{z} \in \overline{\mathbb R}$  such that 
    \begin{equation}\label{best} 
        {\mathbf v}_{j}^{T} \tilde{\mathbf z} =0, \text{ for all } \lambda_{j} \text{ with } |\lambda_{j}| = |\lambda_{1}|
        \quad \text{and}  \quad 
        f(\tilde{z},\lambda_{1}^{2}) \ge 0
            \end{equation}
   for $f(\tilde{z},\lambda_{1}^{2})$ in $(\ref{f})$, and if $\tilde{c}$ is chosen such that 
    \begin{equation}\label{cc} 
        \sumprime_{j=0}^{N-1} \frac{({\mathbf v}_{j}^{T} \tilde{\mathbf z})^{2}}{\lambda_{j} + |\lambda_{1}|}
        \le \frac{1}{\tilde{c}}
        \le \sumprime_{j=0}^{N-1} \frac{({\mathbf v}_{j}^{T} \tilde{\mathbf z})^{2}}{\lambda_{j} - |\lambda_{1}|}.
    \end{equation}

    (2) If there is no $\tilde{z}$ satisfying \cref{best}, then the optimal rank-1 Hankel approximation of ${\mathbf A}$ possesses the error 
    \begin{equation*}
        \tilde{\lambda} \coloneqq \| {\mathbf A} - {\mathbf H}_{1}\|_{2} = \left\| {\mathbf A} - \tilde{c} \, \tilde{\mathbf z} \, \tilde{\mathbf z}^{T} \right\|_{2} \in (|\lambda_{1}|, \, \lambda_{0}),
    \end{equation*}
    where   $\tilde{\lambda}$ is the minimal number in $(|\lambda_{1}|, \lambda_{0})$ satisfying  the relation
    \begin{equation} \label{spectral2}
        \max_{z \in \overline{\mathbb R}} f(z, \tilde{\lambda}^{2}) =0,
    \end{equation}
    and we have $\tilde{z} \in \mathop{\mathrm{argmax}}\limits_{z \in \overline{\mathbb R}} f(z, \tilde{\lambda}^{2})$. Further, 
    \begin{equation}\label{spectral3}
        \tilde{c} \coloneqq \left( \sum_{k =0}^{N-1} \frac{({\mathbf v}_{k}^{T} \tilde{\mathbf z})^{2}}{\lambda_{k} - \tilde{\lambda}} \right)^{-1}
        = \left( \tilde{\mathbf z}^{T} \left( {\mathbf A} - \tilde{\lambda} {\mathbf I} \right)^{-1} \tilde{\mathbf z} \right)^{-1}
        > 0.
    \end{equation}
\end{theorem}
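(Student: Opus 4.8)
The plan is to translate the optimality of $\tilde{c}\,\tilde{\mathbf z}\,\tilde{\mathbf z}^{T}$ into positive semidefiniteness statements for the matrices $\mathbf{M}_{1}(\tilde\lambda)$ and $\mathbf{M}_{2}(\tilde\lambda)$ of \eqref{m1} and \eqref{m2}, and then to extract the conditions on $\tilde{c}$ and $\tilde{z}$ directly from \cref{lem2,lem3}, whose hypotheses are cut out exactly for the diagonal parts $\tilde\lambda\,\mathbf I_{N}-\mathbf\Lambda$ and $\tilde\lambda\,\mathbf I_{N}+\mathbf\Lambda$ occurring here. Recall that $\tilde{c}\,\tilde{\mathbf z}\,\tilde{\mathbf z}^{T}$ is optimal for \eqref{probs} with value $\tilde\lambda$ precisely when both $\tilde\lambda\,\mathbf I_{N}\mp(\mathbf A-\tilde{c}\,\tilde{\mathbf z}\,\tilde{\mathbf z}^{T})$ are positive semidefinite and at least one of them is singular, i.e.\ (conjugating by the orthogonal $\mathbf V$, with $\mubf=\mathbf V^{T}\tilde{\mathbf z}$) when $\mathbf{M}_{1}(\tilde\lambda)$ and $\mathbf{M}_{2}(\tilde\lambda)$ are positive semidefinite with one of them singular. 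I first record that necessarily $\tilde{c}>0$: the $(0,0)$ entry of $\mathbf{M}_{1}(\tilde\lambda)$ equals $\tilde\lambda-\lambda_{0}+\tilde{c}\,\mu_{0}^{2}$, and since $\tilde\lambda<\lambda_{0}$ by \cref{bounds}, a value $\tilde{c}\le0$ would make it negative. Moreover, using $\tilde\lambda\ge|\lambda_{1}|$ (again \cref{bounds}), that $\lambda_{0}$ is the simple largest eigenvalue, and $\lambda_{j}\le|\lambda_{1}|$ for $j\ge1$, the diagonal matrix $\tilde\lambda\,\mathbf I_{N}-\mathbf\Lambda$ has its unique negative entry at index $0$, so \cref{lem2} applies to $\mathbf{M}_{1}(\tilde\lambda)$, while $\tilde\lambda\,\mathbf I_{N}+\mathbf\Lambda$ is positive semidefinite with positive $(0,0)$ entry, so \cref{lem3} applies to $\mathbf{M}_{2}(\tilde\lambda)$.

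For part (1), set $\tilde\lambda=|\lambda_{1}|$: the diagonal of $\mathbf{M}_{1}(|\lambda_{1}|)$ is $\diag(|\lambda_{1}|-\lambda_{j})$, with its single negative entry at $j=0$ and zero entries exactly where $\lambda_{j}=|\lambda_{1}|$, and the diagonal of $\mathbf{M}_{2}(|\lambda_{1}|)=\diag(|\lambda_{1}|+\lambda_{j})$ is positive semidefinite with zero entries exactly where $\lambda_{j}=-|\lambda_{1}|$. Applying \cref{lem2} to $\mathbf{M}_{1}(|\lambda_{1}|)$ and \cref{lem3} to $\mathbf{M}_{2}(|\lambda_{1}|)$ (both with $\mathbf b=\mubf$, $c=\tilde{c}>0$), the two matrices are positive semidefinite if and only if $\mu_{j}=\mathbf v_{j}^{T}\tilde{\mathbf z}=0$ for every $j$ with $|\lambda_{j}|=|\lambda_{1}|$ — the first condition in \eqref{best} — and $\tilde{c}$ satisfies the chain \eqref{cc}. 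A positive $\tilde{c}$ fulfilling \eqref{cc} exists if and only if its upper bound is at least its lower bound; by a partial-fraction expansion the difference of the two bounds equals $2\,|\lambda_{1}|\,f(\tilde{z},\lambda_{1}^{2})$, while the lower bound is strictly positive, since all its denominators are positive and its vanishing would force $\mubf=0$, contradicting $\|\tilde{\mathbf z}\|_{2}=1$. Hence $(\tilde{c},\tilde{z})$ achieves error $|\lambda_{1}|$ iff \eqref{best} and \eqref{cc} hold; since $|\lambda_{1}|$ is the error of the best unstructured rank-1 approximation (Eckart--Young--Mirsky), this also shows that the optimal error equals $|\lambda_{1}|$ iff some $\tilde{z}$ satisfies \eqref{best}.

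For part (2), \cref{bounds} together with part (1) gives $\tilde\lambda\in(|\lambda_{1}|,\lambda_{0})$. Fix any $\lambda\in(|\lambda_{1}|,\lambda_{0})$: then $\diag(\lambda-\lambda_{j})$ has no zero entry, exactly one negative entry (at $j=0$), and is otherwise strictly positive, while $\diag(\lambda+\lambda_{j})$ is positive definite, so as above \cref{lem2,lem3} show that a rank-1 Hankel matrix attains error $\le\lambda$ iff there is $z\in\overline{\mathbb{R}}$ and $\tilde{c}>0$ with $\sum_{j}\tfrac{\mu_{j}^{2}}{\lambda_{j}+\lambda}\le\tfrac{1}{\tilde{c}}\le\sum_{j}\tfrac{\mu_{j}^{2}}{\lambda_{j}-\lambda}$ for $\mu_{j}=\mathbf v_{j}^{T}\mathbf z_{N}(z)$; since the left bound is positive and the difference of the bounds is $2\lambda\,f(z,\lambda^{2})$, this is equivalent to $\max_{z\in\overline{\mathbb{R}}}f(z,\lambda^{2})\ge0$. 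I would then use that $g(\lambda^{2})\coloneqq\max_{z\in\overline{\mathbb{R}}}f(z,\lambda^{2})$ is continuous (the map $z\mapsto\mathbf z_{N}(z)$ extends continuously to the one-point compactification $\overline{\mathbb{R}}$ with value $\mathbf e_{N}$ at $\infty$, so $f$ is jointly continuous and the maximum over the compact $\overline{\mathbb{R}}$ is attained) and strictly increasing in $\lambda^{2}$ on $(\lambda_{1}^{2},\lambda_{0}^{2})$; since no approximation improves on the optimal error, $g(\lambda^{2})<0$ for $\lambda<\tilde\lambda$ whereas $g(\tilde\lambda^{2})\ge0$ (the optimum is attained), so by continuity $g(\tilde\lambda^{2})=0$, which is \eqref{spectral2}, and $\tilde\lambda$ is the smallest such value. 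Finally, at an optimal $\tilde{z}$ the matrices $\mathbf{M}_{1}(\tilde\lambda),\mathbf{M}_{2}(\tilde\lambda)$ are positive semidefinite, so $f(\tilde{z},\tilde\lambda^{2})\ge0$, and combined with $\max_{z}f(z,\tilde\lambda^{2})=0$ this gives $f(\tilde{z},\tilde\lambda^{2})=0$, i.e.\ $\tilde{z}\in\argmax_{z\in\overline{\mathbb{R}}}f(z,\tilde\lambda^{2})$; then the two bounds above coincide, forcing $\tfrac{1}{\tilde c}=\sum_{k}\tfrac{(\mathbf v_{k}^{T}\tilde{\mathbf z})^{2}}{\lambda_{k}-\tilde\lambda}=\tilde{\mathbf z}^{T}(\mathbf A-\tilde\lambda\,\mathbf I)^{-1}\tilde{\mathbf z}$, which equals the other, manifestly positive, bound $\sum_{k}\tfrac{(\mathbf v_{k}^{T}\tilde{\mathbf z})^{2}}{\lambda_{k}+\tilde\lambda}>0$, yielding \eqref{spectral3}.

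The main obstacle will be the argument in part (2) that identifies the optimal error with the unique zero of $g$: one must check carefully, across the whole interval $(|\lambda_{1}|,\lambda_{0})$, the equivalence ``error $\le\lambda$ attainable $\iff g(\lambda^{2})\ge0$'' — including the sign of $\tilde{c}$, the limiting case $z=\infty$, and the bookkeeping of the omitted $\tfrac{0}{0}$ terms (the primed sums in \eqref{f}, \eqref{best}, \eqref{cc}) — and then combine the continuity and strict monotonicity of $g$ with the existence of an optimal pair to pin down $\tilde\lambda$ and $\tilde{c}$. By contrast, part (1) and all the determinant identities are direct consequences of \cref{lem1,lem2,lem3}.
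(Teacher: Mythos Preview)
Your proposal is correct and follows essentially the same route as the paper: reduce optimality to the positive semidefiniteness of $\mathbf{M}_{1}(\tilde\lambda)$ and $\mathbf{M}_{2}(\tilde\lambda)$, apply \cref{lem2,lem3} to obtain the chain of inequalities \eqref{c33}, and read off conditions \eqref{best}, \eqref{cc}, \eqref{spectral2}, \eqref{spectral3}. The only organizational difference is in part~(2): the paper argues directly at the optimum (if the inequalities in \eqref{c33} were strict one could decrease $\tilde\lambda$, hence equalities hold and $f(\tilde z,\tilde\lambda^{2})=0$; then a separate contradiction shows $f(z,\tilde\lambda^{2})\le 0$ for all $z$), whereas you package the same content into the auxiliary function $g(\lambda^{2})=\max_{z}f(z,\lambda^{2})$ and use its continuity and strict monotonicity to locate $\tilde\lambda$ as the unique zero --- this is a clean reformulation of the same argument, not a different one.
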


\begin{proof}
Throughout this proof, let
\begin{equation}\label{la}
    \tilde{\lambda} \coloneqq \left\|{\mathbf A} - \tilde{c}\,  \tilde{\mathbf z} \, \tilde{\mathbf z}^{T} \right\|_{2}
\end{equation}
denote the optimal rank-1 approximation error, i.e., the parameters $\tilde{z}, \, \tilde{c}$ generate an optimal rank-1 Hankel approximation of ${\mathbf A}$. Recall from the beginning of this section that this holds if and only if the two symmetric matrices ${\mathbf M}_{1}(\tilde{\lambda})$ and ${\mathbf M}_{2}(\tilde{\lambda})$ in (\ref{m1}) and (\ref{m2})
are positive semidefinite, and at least one of them possesses the eigenvalue $0$.

1.\ 
The optimal parameter $\tilde{z}$ necessarily satisfies $\mu_{0}= {\mathbf v}_{0}^{T} \tilde{\mathbf z} \neq 0$, since otherwise we would find $({\mathbf A} - \tilde{c} \, \tilde{\mathbf z} \, \tilde{\mathbf z}^{T}) {\mathbf v}_{0} = {\mathbf A} {\mathbf v}_{0} = \lambda_{0} {\mathbf v}_{0}$ contradicting the upper bound $\tilde{\lambda} < \lambda_{0}$ from \cref{bounds}.
Moreover, we obtain the necessary condition  $\tilde{c} >0 $ since for $\tilde c\le0$ we would add a positive semidefinite matrix to ${\mathbf A}$ thereby enlarging the spectral norm,
\begin{align*}
    \|{\mathbf A} - \tilde{c} \, \tilde{\mathbf z} \, \tilde{\mathbf z}^{T}\|_{2}
    &= \max_{\| {\mathbf v}\|_{2} =1} |{\mathbf v}^{T}({\mathbf A} - \tilde{c} \, \tilde{\mathbf z} \, \tilde{\mathbf z}^{T}) {\mathbf v}| \\
    &\ge   {\mathbf v}_{0}^{T} {\mathbf A} {\mathbf v}_{0} - \tilde{c} \, ({\mathbf v}_{0}^{T} \tilde{\mathbf z})^{2} = \lambda_{0} + |\tilde{c}|\, ({\mathbf v}_{0}^{T} \tilde{\mathbf z})^{2} \ge \lambda_{0}.
\end{align*}

2.\ We derive necessary and sufficient conditions for the optimal parameters $\tilde{c}>0$, $\tilde{z} \in \overline{\mathbb R}$ and $\tilde{\lambda} \in [|\lambda_{1}|, \, \lambda_{0})$ by inspecting the matrices ${\mathbf M}_{1}(\tilde{\lambda})$ and ${\mathbf M}_{2}(\tilde{\lambda})$. Thereby we prove part (1) of \cref{theospectral1}.

Note that for the entries of the diagonal part $\tilde{\lambda} \, {\mathbf I} - {\mathbf \Lambda}$ of $\mathbf M_1(\tilde\lambda)$  in (\ref{m1}) we have $\tilde\lambda-\lambda_0 < 0$ while $\tilde\lambda-\lambda_1, \tilde\lambda-\lambda_2, \dots, \tilde\lambda-\lambda_{N-1} \geq 0$. The diagonal part 
$\tilde{\lambda} \, {\mathbf I} + {\mathbf \Lambda}$ of the matrix
$\mathbf M_2(\tilde\lambda)$ in (\ref{m2}) is positive semidefinite,  and we have $\tilde\lambda+\lambda_j \geq 0$ for all $j=0,\dots,N-1$ and actually $\tilde\lambda+\lambda_0>0$.
From \cref{lem2,lem3} it thus follows that ${\mathbf M}_{1}(\tilde{\lambda})$ and ${\mathbf M}_{2}(\tilde{\lambda})$ are both positive semidefinite if and only if 
\begin{equation}\label{c33}
    \sumprime_{j=0}^{N-1} \frac{\mu_{j}^{2}}{\lambda_{j} + \tilde{\lambda}}  \le \frac{1}{\tilde{c}}
    \le \sumprime_{j=0}^{N-1} \frac{\mu_{j}^{2}}{\lambda_{j} - \tilde{\lambda}}, 
\end{equation}
and if in case of $\tilde{\lambda} = |\lambda_{1}|$ moreover $\mu_{j} = {\mathbf v}_{j}^{T} \tilde{\mathbf z} =0$ for all $\lambda_{j}$ with $|\lambda_{j}| = \tilde{\lambda}$.
Obviously, a parameter $\tilde{c}$ satisfying \cref{c33} only exists, if 
\begin{equation*}
    \sumprime_{j=0}^{N-1} \frac{\mu_{j}^{2}}{\lambda_{j} - \tilde{\lambda}} - \sumprime_{j=0}^{N-1} \frac{\mu_{j}^{2}}{\lambda_{j} + \tilde{\lambda}}
    = 2 \tilde{\lambda} \sumprime_{j=0}^{N-1} \frac{\mu_{j}^{2}}{\lambda_{j}^{2} - \tilde{\lambda}^{2}} = 2 \tilde{\lambda} \, f(\tilde{z}, \tilde{\lambda}) \ge 0.
\end{equation*}
Observe that $\tilde{\lambda} \neq 0$ since $\tilde{\lambda} \ge |\lambda_{1}|$, and we have assumed that $\rank\mathbf A >1$. Thus, for the function $f(z,\lambda^{2})$ from \cref{f} it follows that $f(\tilde{z}, \tilde{\lambda}^{2}) \ge 0$, and we conclude \cref{best} and \cref{cc} for $\tilde{\lambda} = |\lambda_{1}|$.

3.\ We prove part (2) of \cref{theospectral1}. Assume that \cref{best} is not satisfied for any $\tilde{z} \in \overline{\mathbb R}$, i.e.,  
$\tilde{\lambda} > |\lambda_{1}|$. 
Inspecting the two sums in \cref{c33}, we observe that the left sum increases for decreasing $\tilde{\lambda}$ while the right sum decreases with  decreasing $\tilde{\lambda}$. Thus, \cref{c33} implies the equalities 
\begin{equation}\label{c4}
    \sum_{j=0}^{N-1} \frac{\mu_{j}^{2}}{\lambda_{j} + \tilde{\lambda}}  = \frac{1}{\tilde{c}}
    = \sum_{j=0}^{N-1} \frac{\mu_{j}^{2}}{\lambda_{j} - \tilde{\lambda}},
\end{equation}
for the minimal error $\tilde{\lambda}$. Otherwise, we could find a parameter $\tilde{c}$ such that the two inequalities in \cref{c33} are strict. But then, \cref{lem2,lem3} yield that the two matrices ${\mathbf M}_{1}(\tilde{\lambda})$ and ${\mathbf M}_{2}(\tilde{\lambda})$ are actually positive definite, and we could find some $|\lambda_1| \leq \lambda < \tilde{\lambda}$ such that ${\mathbf M}_{1}({\lambda})$ and ${\mathbf M}_{2}({\lambda})$ are still positive semidefinite. This would contradict our assumption \cref{la}.

Relation \cref{c4} directly implies that 
$\det {\mathbf M}_{1}(\tilde{\lambda}) = \det {\mathbf M}_{2}(\tilde{\lambda}) = 0$ by Lemma \ref{lem1}, or equivalently, that  $\tilde{\lambda}$ as well as $-\tilde{\lambda}$ are eigenvalues of ${\mathbf A} - \tilde{c} \, \tilde{\mathbf z} \, \tilde{\mathbf z}^{T}$.
Assertion \cref{spectral3} now follows from \cref{c4}.
Further, we conclude
\begin{equation*}
    \sum_{j=0}^{N-1} \frac{\mu_{j}^{2}}{\lambda_{j} + \tilde{\lambda}}  - \sum_{j=0}^{N-1} \frac{\mu_{j}^{2}}{\lambda_{j} - \tilde{\lambda}}
    = 2 \tilde{\lambda} \sum_{j=0}^{N-1} \frac{\mu_{j}^{2}}{\lambda_{j}^{2} - \tilde{\lambda}^{2}} = 0.
\end{equation*}
Since $\tilde{\lambda} > |\lambda_1| \geq 0$, this shows that $f(\tilde{z}, \tilde{\lambda}^{2}) = 0$.

Lastly, we consider $f(z, \tilde{\lambda}^{2})$ as a rational function in $z$ for the fixed  optimal error $\tilde{\lambda}$.  
We show that $f(z, \tilde{\lambda}^{2})\le 0$ for all $z \in \overline{\mathbb R}$.
Assume to the contrary that there is some $z$ with $f(z, \tilde{\lambda}^{2})> 0$. With the same arguments as before, we then obtain a range for the choice of $\tilde{c}$. But then $\tilde{c}$ can be taken such that the two matrices 
${\mathbf M}_{1}(\tilde{\lambda})$ and ${\mathbf M}_{2}(\tilde{\lambda})$ are positive definite. In that case $\tilde{\lambda}$ is no longer the optimal error, contradicting our assumption.
Thus we have shown \cref{spectral2}.
\end{proof}

\begin{remark}
1.\ The conditions \cref{best} in \cref{theospectral1} are particularly satisfied if the eigenvector corresponding to the largest eigenvalue $\mathbf v_0$ is of the form  ${\mathbf v}_{0} =\tilde{\mathbf z}$. In this case \cref{best} simplifies to
\begin{equation*}
    \frac{({\mathbf v}_{0}^{T} \tilde{\mathbf z})^{2}}{\lambda_{0}^{2} - \lambda_{1}^{2}} = \frac{\|\tilde{\mathbf z}\|_{2}^{2}}{\lambda_{0}^{2} - \lambda_{1}^{2}} = \frac{1}{\lambda_{0}^{2} - \lambda_{1}^{2}} \ge 0,
\end{equation*}
since ${\mathbf v}_{j}^{T} \tilde{\mathbf z} =0$ for $j=1, \ldots, N-1$.

2.\ The solution parameters $(\tilde{z}, \,\tilde{c})$ determining  the optimal rank-1 Hankel  approximation  with respect to the spectral norm need not be unique. If \cref{best} is satisfied and the optimal error $\Vert \mathbf A - \tilde{c}\tilde{\mathbf z}\tilde{\mathbf z}^T \Vert_2^2 = \lambda_1^2$ is attained, there are several possible choices for $\tilde c$ if the inequality in \cref{cc} is strict. If \cref{best} cannot be satisfied and $\tilde c$ is determined uniquely by \cref{spectral3}, it may happen that $\tilde{z} \in  \mathop{\mathrm{argmax}}_{z \in \mathbb{R}} f(z,\tilde \lambda^2)$ is not unique, see \cref{Cadex}.
\end{remark}

Finally, we study the problem \cref{probs} if the largest singular value $\lambda_{0}$ of ${\mathbf A}$ occurs with higher multiplicity.

\begin{corollary}
Let ${\mathbf A} = (a_{j,k})_{j,k=0}^{N-1,N-1} \in {\mathbb R}^{N \times N}$ be symmetric  with $N\ge 2$. Assume that $\textrm{rank} ({\mathbf A}) >1$ and let $\lambda_{0} = \| {\mathbf A}\|_{2} = |\lambda_{1}|$. \\
If all eigenvalues $\lambda$ of ${\mathbf A}$ with $|\lambda| = \|{\mathbf A}\|_{2}$  have the same sign, then every rank-1 Hankel matrix $\tilde{c}\, 
 \tilde{\mathbf z}\,  \tilde{\mathbf z}^{T}$ with $\tilde{z} \in \overline{\mathbb R}$ and with $0 < \tilde{c} \le \Big( \sum\limits_{j=0}^{N-1} \frac{(\tilde{\mathbf z}^{T} {\mathbf v}_{j})^2}{\lambda_{0} + \lambda_{j}}\Big)^{-1}$ solves $(\ref{probs})$ with
 the optimal error $\| {\mathbf A} - \tilde{c}\, 
 \tilde{\mathbf z}\,  \tilde{\mathbf z}^{T}\|_{2} = \| {\mathbf A} \|_{2}$.\\
 If there exist eigenvalues $\lambda_{0}$, $\lambda_{1}$ with $\lambda_{0} = \| {\mathbf A}\|_{2} = |\lambda_{1}|$ and $\lambda_1= -\lambda_0$ then there may be no real rank-1 Hankel matrix that satisfies $\| {\mathbf A} - \tilde{c}\, 
 \tilde{\mathbf z}\,  \tilde{\mathbf z}^{T}\|_{2} = \| {\mathbf A} \|_{2}$.
\end{corollary}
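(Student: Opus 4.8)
The plan is to handle both parts by reducing the identity $\lVert \mathbf A - \tilde c\,\tilde{\mathbf z}\,\tilde{\mathbf z}^T\rVert_2 = \lVert \mathbf A\rVert_2 = \lambda_0$ to the simultaneous positive semidefiniteness of the two matrices $\mathbf M_1(\lambda_0)$ and $\mathbf M_2(\lambda_0)$ from \cref{m1,m2}, evaluated at the error level $\lambda_0$. Since $\rank \mathbf A > 1$ and $\lambda_0 = \lVert \mathbf A\rVert_2 = \lvert\lambda_1\rvert$, the two largest singular values of $\mathbf A$ both equal $\lambda_0$, so Eckart--Young--Mirsky already gives $\lVert \mathbf A - \mathbf B\rVert_2 \ge \lambda_0$ for every rank-$1$ matrix $\mathbf B$; hence the asserted error $\lVert \mathbf A\rVert_2$ is the smallest possible, and all that has to be decided is when the reverse inequality $\lVert \mathbf A - \tilde c\,\tilde{\mathbf z}\,\tilde{\mathbf z}^T\rVert_2 \le \lambda_0$ — equivalently $\mathbf M_1(\lambda_0)\succeq 0$ and $\mathbf M_2(\lambda_0)\succeq 0$ — can be enforced, and for which $\tilde c$.

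For the first part, with the normalization $\lambda_0 = \lvert\lambda_0\rvert > 0$ the same-sign hypothesis forces $\lambda_j = \lambda_0$ whenever $\lvert\lambda_j\rvert = \lambda_0$ and $\lvert\lambda_j\rvert < \lambda_0$ otherwise. Writing $\mubf \coloneqq \mathbf V^T\tilde{\mathbf z}$, so $\mu_j = \mathbf v_j^T\tilde{\mathbf z} = \tilde{\mathbf z}^T\mathbf v_j$, the diagonal part $\lambda_0\mathbf I_N - \mathbf\Lambda$ of $\mathbf M_1(\lambda_0)$ has entries $\lambda_0-\lambda_j \ge 0$, so $\mathbf M_1(\lambda_0) = (\lambda_0\mathbf I_N - \mathbf\Lambda) + \tilde c\,\mubf\,\mubf^T$ is a sum of two positive semidefinite matrices whenever $\tilde c > 0$, hence PSD for every $\tilde z \in \overline{\mathbb R}$. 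The diagonal part $\lambda_0\mathbf I_N + \mathbf\Lambda$ of $\mathbf M_2(\lambda_0)$ has entries $\lambda_0+\lambda_j$ equal to $2\lambda_0 > 0$ when $\lvert\lambda_j\rvert = \lambda_0$ and strictly positive when $\lvert\lambda_j\rvert < \lambda_0$, so it is positive definite; \cref{lem3} (with vacuous zero-diagonal clause) then says that $\mathbf M_2(\lambda_0) = (\lambda_0\mathbf I_N + \mathbf\Lambda) - \tilde c\,\mubf\,\mubf^T$ is positive semidefinite exactly when $\sum_{j=0}^{N-1}(\tilde{\mathbf z}^T\mathbf v_j)^2/(\lambda_0+\lambda_j) \le 1/\tilde c$, which is precisely the stated range for $\tilde c$. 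Combining this with the lower bound gives $\lVert \mathbf A - \tilde c\,\tilde{\mathbf z}\,\tilde{\mathbf z}^T\rVert_2 = \lambda_0 = \lVert \mathbf A\rVert_2$ for all admissible $\tilde z$ and $\tilde c$.

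For the second part the first task is to expose the structural obstruction. If $-\lambda_0$ is an eigenvalue, then the diagonal of $\mathbf M_2(\lambda_0)$ vanishes at every index $j$ with $\lambda_j = -\lambda_0$, so by the ``$b_j=0$ whenever $d_j=0$'' clause of \cref{lem3} any $\tilde c > 0$ with $\mathbf M_2(\lambda_0)\succeq 0$ forces $\mathbf v_j^T\tilde{\mathbf z} = 0$ for all such $j$; symmetrically, the diagonal of $\mathbf M_1(\lambda_0)$ vanishes at every index with $\lambda_j = +\lambda_0$, and for $\tilde c < 0$ the matrix $\mathbf M_2(\lambda_0)$ is automatically PSD while $\mathbf M_1(\lambda_0) = (\lambda_0\mathbf I_N - \mathbf\Lambda) - \lvert\tilde c\rvert\,\mubf\,\mubf^T\succeq 0$ forces $\mathbf v_j^T\tilde{\mathbf z}=0$ for all $j$ with $\lambda_j = +\lambda_0$. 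Hence a real rank-$1$ Hankel matrix can attain the error $\lVert \mathbf A\rVert_2$ only if some $\tilde{\mathbf z} = \mathbf z_N(\tilde z)$, $\tilde z\in\overline{\mathbb R}$ (with $\tilde{\mathbf z} = \mathbf e_N$ for $\tilde z = \infty$), is orthogonal to the $(-\lambda_0)$-eigenspace, or one such $\tilde{\mathbf z}$ is orthogonal to the $(+\lambda_0)$-eigenspace. I would then exhibit a matrix for which neither happens: take $N=3$ with eigenvalues $1,-1,0$ and orthonormal eigenvectors $\mathbf v_0 = \tfrac1{\sqrt{17}}(2,3,2)^T$, $\mathbf v_1 = \tfrac1{\sqrt{34}}(3,-4,3)^T$, $\mathbf v_2 = \tfrac1{\sqrt2}(1,0,-1)^T$, and set $\mathbf A = \mathbf v_0\mathbf v_0^T - \mathbf v_1\mathbf v_1^T = \tfrac1{34}\left(\begin{smallmatrix}-1 & 24 & -1\\ 24 & 2 & 24\\ -1 & 24 & -1\end{smallmatrix}\right)$, which is symmetric with $\lVert\mathbf A\rVert_2 = 1 = \lvert\lambda_1\rvert$, $\rank\mathbf A = 2 > 1$, and $\lambda_1 = -\lambda_0$. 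Here $\mathbf v_0^T\mathbf z_3(z)$ and $\mathbf v_1^T\mathbf z_3(z)$ are positive multiples of $2z^2+3z+2$ and $3z^2-4z+3$, whose discriminants $-7$ and $-20$ are negative and whose leading coefficients are nonzero, so $\mathbf v_0^T\tilde{\mathbf z}\neq 0$ and $\mathbf v_1^T\tilde{\mathbf z}\neq 0$ for every $\tilde z\in\overline{\mathbb R}$ (at $\tilde z = \infty$ because $\mathbf v_i^T\mathbf e_3 = (v_i)_2 \neq 0$). Thus no real rank-$1$ Hankel matrix achieves $\lVert\mathbf A - \tilde c\,\tilde{\mathbf z}\,\tilde{\mathbf z}^T\rVert_2 = \lVert\mathbf A\rVert_2$. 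The step I expect to be the main obstacle is exactly this construction: one has to notice that for even $N$ the polynomial $\sum_k v_k z^k$ attached to any eigenvector always has a zero in $\overline{\mathbb R}$ (odd degree forces a real root, and $\tilde z=\infty$ works once the leading coefficient vanishes), so the counterexample must live in odd dimension; and that, because $c$ may have either sign, one must \emph{simultaneously} prevent the $(+\lambda_0)$- and the $(-\lambda_0)$-eigenvector from becoming orthogonal to $\tilde{\mathbf z}$, i.e.\ one needs two mutually orthogonal vectors in $\mathbb R^3$ whose associated quadratics are both sign-definite — verifying that $(2,3,2)$ and $(3,-4,3)$ do this is the crux.
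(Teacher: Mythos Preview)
Your proof is correct and follows essentially the same route as the paper: reduce the question to the simultaneous positive semidefiniteness of $\mathbf M_1(\lambda_0)$ and $\mathbf M_2(\lambda_0)$, dispose of $\mathbf M_1(\lambda_0)$ as a sum of two PSD matrices, and apply \cref{lem3} to $\mathbf M_2(\lambda_0)$ to read off the admissible range for $\tilde c$.

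For the second part you go further than the paper does. The paper only sketches the obstruction (``we may not find such value $\tilde z$, if for example $\mu_0 = \mathbf v_0^T\mathbf z$ and $\mu_1 = \mathbf v_1^T\mathbf z$ have no real zeros'') without exhibiting a concrete matrix, whereas you build an explicit $3\times 3$ example and verify the discriminants. You also make explicit the case $\tilde c<0$ (where the roles of $\mathbf M_1$ and $\mathbf M_2$ swap and one needs $\mathbf v_0^T\tilde{\mathbf z}=0$ instead of $\mathbf v_1^T\tilde{\mathbf z}=0$); the paper handles this only implicitly by requiring \emph{both} $\mu_0$ and $\mu_1$ to have no real zero. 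Your remark that for even $N$ the associated degree-$(N{-}1)$ polynomial always has a real zero (so the counterexample must live in odd dimension) is a nice structural observation that the paper does not make.
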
 

\begin{proof}
We use the same notations as before for eigenvalues and eigenvectors of ${\mathbf A}$, in particular ${\mathbf A} = {\mathbf V} {\mathbf \Lambda} {\mathbf V}^{T}$ and $\mubf = {\mathbf V}^{T} \tilde{\mathbf z}$. 
The Eckart-Young-Mirsky Theorem  implies that  the optimal error of an unstructured  rank-1 approximation of ${\mathbf A}$  is $\lambda_{0} = \tilde{\lambda} = \|{\mathbf A}\|_{2}$ in the considered case.  
This error is also achieved by the rank-1 Hankel approximation, if the two matrices ${\mathbf M}_{1}(\tilde{\lambda})$ and ${\mathbf M}_{2}(\tilde{\lambda})$ in (\ref{m1}) and (\ref{m2}) are  both positive semidefinite.
With $\tilde{\lambda} = \lambda_{0}$  we observe that $\tilde{\lambda} {\mathbf I}_{N} - {\mathbf \Lambda}$  as well as $\tilde{c} \, \tilde {\mubf} {\mubf}^{T} $  are both positive semidefinite  for all $\tilde{z} \in \overline{\mathbb R}$ and $\tilde{c} \ge 0$, and thus ${\mathbf M}_{1}(\tilde{\lambda})$ is positive semidefinite.

Further, ${\mathbf M}_{2}(\tilde{\lambda})$  is the difference  of two  positive semidefinite matrices $\tilde{\lambda} {\mathbf I}_{N} + {\mathbf \Lambda}$  and $\tilde{c} \, \tilde {\mubf} {\mubf}^{T} $, and Lemma \ref{lem3} implies  that ${\mathbf M}_{2}(\tilde{\lambda})$ is positive semidefinite if and only if the  condition 
\begin{equation}\label{dop} \sumprime_{k=0}^{N-1} \frac{\mu_{k}^{2}}{\lambda_{0} + \lambda_{k}} \le \frac{1}{\tilde{c}}
\end{equation}
is satisfied.
If all eigenvalues $\lambda_{j}$ of ${\mathbf A}$  with $|\lambda_{j}| = \lambda_{0} = \| {\mathbf A}\|_{2}$  have the same sign, then $\lambda_{0} + \lambda_{k} >0$ for all $k=0, \ldots , N-1$ in the sum above, and we find for every $\tilde{z} \in \overline{\mathbb R}$ the range 
$$  
0 < \tilde{c} \le \Big(\sumprime_{k=0}^{N-1} \frac{\mu_{k}^{2}}{\lambda_{0} + \lambda_{k}} \Big)^{-1}
$$
such that $\| {\mathbf A} - \tilde{c}\, 
 \tilde{\mathbf z}\,  \tilde{\mathbf z}^{T}\|_{2} = \| {\mathbf A} \|_{2}$.
If there are eigenvalues $\lambda_{j}$ 
of ${\mathbf A}$  with $|\lambda_{j}| = \lambda_{0} = \| {\mathbf A}\|_{2}$ of different sign, then we obtain  at least one vanishing denomiantor in 
the sum in (\ref{dop}), and the condition of positive semidefiniteness for 
${\mathbf M}_{2}(\tilde{\lambda})$ is only satisfied for values 
$\tilde{z}$ with vanishing corresponding numerators.
Already for the case that the 
singular value $\lambda_{0}$ is of multiplicity $2$ with 
$\lambda_{0}=-\lambda_{1}$, we may not find such  value 
$\tilde{z}$, if for example $\mu_{0}={\mathbf v}_{0}^{T} {\mathbf z}$ and $\mu_{1}={\mathbf v}_{1}^{T} {\mathbf z}$ have no real zeros. 
In this case, there is no meaningful solution of problem (\ref{probs}) of true rank $1$, but only the trivial solution of a zero-matrix.
\end{proof}

\subsection{Computation of the Optimal Rank-1 Hankel Approximation for the Spectral Norm}
\label{last}
The conditions shown in \cref{theospectral1} can be used to provide an algorithm for computing the optimal rank-1 Hankel approximation numerically. 
First, we can verify whether \cref{best} can be satisfied. If this is possible for some $\tilde{z}$, we can choose $\tilde{c}$ according to (\ref{cc}).
If there is no $\tilde{z} \in \overline{{\mathbb R}}$ satisfying \cref{best}, then we have to employ the relations \cref{spectral2} and \cref{spectral3} in \cref{theospectral1} to determine $\tilde z$ and $\tilde c$. We use the following observation.

For fixed $\lambda^2\in(\lambda_1^2,\lambda_0^2)$ define $f_\lambda(z) \coloneqq f(z,\lambda^2)$. Since for fixed $z$, $f(z,\lambda^2)$ is strictly monotonically increasing in $\lambda^2$, formula \cref{spectral2} implies:\\ 
If $\max_{z} f_{\lambda}(z) >0$, then the optimal error $\tilde{\lambda}$ in \cref{la} satisfies $\tilde{\lambda}^{2} < \lambda^{2}$.\\
If $\max_{z} f_{\lambda}(z) <0$, then the optimal error satisfies $\tilde{\lambda}^{2} > \lambda^{2}$.\\
If $\max_{z} f_{\lambda}(z) =0$, then the optimal error satisfies $\tilde{\lambda}^{2} = \lambda^{2}$ and the rank-1 Hankel approximation is generated by this zero $\tilde{z} \in\mathop{\mathrm{argmax}}\limits_{z \in \overline{\mathbb R}} f_\lambda(z)$ and $\tilde{c}$ from \cref{spectral3}.

To find a simple range, where we have to search for the maximum of  $f_{\lambda}$,  we  apply  an observation similar to that used in \cref{theoneu} for the Frobenius norm.
Let
\begin{equation*}
    f^{(1)}_{\lambda}(z) \coloneqq \sum_{j=0}^{N-1} \frac{({\mathbf v}_{j}^{T} {\mathbf J}_{N} {\mathbf z})^{2}}{\lambda_{j}^{2} - \lambda^{2}},
\end{equation*}
where ${\mathbf J}_{N}$ denotes the counter identity in \cref{J}.
Then we observe  for $z\neq 0$  because of   ${\mathbf z}(1/z) =    {\mathbf J}_{N} \, {\mathbf z}(z) = {\mathbf J}_{N} \, {\mathbf z}$ that
\begin{equation*}
    f_{\lambda}\left(\frac{1}{z}\right)
    =  \sum_{j=0}^{N-1} \frac{\big({\mathbf v}_{j}^{T}  {\mathbf z}(1/z)\big)^{2}}{\lambda_{j}^{2} - \lambda^{2}}
    =  \sum_{j=0}^{N-1} \frac{\big({\mathbf v}_{j}^{T} {\mathbf J}_{N} {\mathbf z}(z)\big)^{2}}{\lambda_{j}^{2} - \lambda^{2}}
    = f^{(1)}_{\lambda}(z).
\end{equation*}
In particular, $f_{\lambda}^{(1)}(0) = \lim_{z \to \infty}f_{\lambda}(z)$.
Thus, we only have to search for the maximum of $f_{\lambda}(z)$ in the interval $[-1,1]$ and  for the maximum of  $f^{(1)}_{\lambda}(z)$  in  $(-1, 1)$. We obtain \cref{alg1} to compute the optimal rank-1 Hankel approximation of ${\mathbf A}$ with respect to the spectral norm as well as the corresponding error.

\begin{algorithm}[Optimal rank-1 Hankel approximation w.r.t.\ the spectral norm]
\label{alg1}
\textbf{Input:} Symmetric matrix ${\mathbf A} \in {\mathbb R}^{N \times N}$ with single largest singular value $\lambda_{0}>0$, threshold $\epsilon>0$.

\begin{enumerate}
    \item Compute the SVD of ${\mathbf A}$ to obtain the singular values  $\lambda_{0} > |\lambda_{1}| \ge \ldots \ge |\lambda_{N-1}| \ge 0$ and the normalized eigenvectors ${\mathbf v}_{0}, \ldots , {\mathbf v}_{N-1}$, such that ${\mathbf V} = ({\mathbf v}_{0} \ldots  {\mathbf v}_{N-1})$ is an orthogonal matrix.
    \item Compute the set $\Sigma$ of joint real zeros of the functions $v_{j}(z)\coloneqq {\mathbf v}_{j}^{T} {\mathbf z}$ corresponding to eigenvalues $\lambda_{j}$ with $|\lambda_{j}|= |\lambda_{1}|$ and with ${\mathbf z}$ as in \cref{z} or ${\mathbf z} = {\mathbf e}_{N}$. 
    For each $z \in \Sigma$ compute 
    \begin{equation*}
          f_{\lambda_{1}}(z)
        =  \sumprime_{j=0}^{N-1} \frac{({\mathbf v}_{j}^{T} {\mathbf z})^{2}}{\lambda_{j}^{2} - \lambda_{1}^{2}}.
    \end{equation*}
    If a value $z \in \Sigma$ satisfies $f_{\lambda_{1}}(z) \ge 0$ then set
    \begin{equation*}
        \tilde{\lambda} \coloneqq |\lambda_{1}|,
        \qquad \tilde{z} \coloneqq z,
        \qquad  \tilde{c} \coloneqq \left( \sumprime_{j=0}^{N-1} \frac{({\mathbf v}_{j}^{T} \tilde{\mathbf z})^{2}}{\lambda_{j}- |\lambda_{1}|} \right)^{-1}.
    \end{equation*}
    \item If $\Sigma = \emptyset$ or if there is no $z\in \Sigma$ satisfying $f_{\lambda_{1}}(z) \ge 0$ then apply the following bisection iteration:\\
    Set $a\coloneqq|\lambda_{1}|$ and $b\coloneqq\lambda_{0}$.
    While $b-a >\epsilon$ iterate:
    \begin{enumerate}
        \item Compute $x\coloneqq\frac{a + b}{2}$. 
        Find the maximal value $W$ of 
        \begin{equation*}
           f_{x}(z) = \sum_{j=0}^{N-1} \frac{({\mathbf v}_{j}^{T} {\mathbf z})^{2}}{\lambda_{j}^{2} - x^{2}} \qquad \text{for $z\in [-1, 1]$}
        \end{equation*}
        and of
        \begin{equation*}
            f^{(1)}_{x}(z) = \sum_{j=0}^{N-1} \frac{({\mathbf v}_{j}^{T} {\mathbf J}_{N}{\mathbf z})^{2}}{\lambda_{j}^{2} - x^{2}} \qquad \text{for $z \in (-1,1)$}.
        \end{equation*}
        \item If $W=0$, then we have found the optimal solution, go to 4.\\
        If $W >0$, then $b\coloneqq x$, else $a\coloneqq x$.
    \end{enumerate}
    \item Set $\tilde{\lambda} \coloneqq {x}$. If $f_{\tilde{\lambda}}$ possesses a zero $z$ in $[-1, 1]$ then $\tilde{z}\coloneqq z$, otherwise, if  $f^{(1)}_{\tilde{\lambda}}$ possesses a zero $z$ in $(-1,1)$ then set $\tilde{z}\coloneqq1/z$. Compute
    \begin{equation*}
        \tilde{c} \coloneqq \left( \sum_{j=0}^{N-1} \frac{({\mathbf v}_{j}^{T} \tilde{\mathbf z})^{2}}{\lambda_{j}- \tilde{\lambda}}\right)^{-1}.
    \end{equation*}
\end{enumerate}
\textbf{Output:} $\tilde{z}$, $\tilde{c}$ generating an optimal rank-1 Hankel approximation of ${\mathbf A}$  with respect
to the spectral norm,
error $\tilde{\lambda} = \| {\mathbf A} - \tilde{c} \, \tilde{\mathbf z}\tilde{\mathbf z}^{T} \|_{2}$.
\end{algorithm}

\begin{remark}
Obviously, the optimal rank-1 Hankel approximation depends on the distribution of all eigenvalues of ${\mathbf A}$ as well as on the structure of the eigenvectors of ${\mathbf A}$. In particular, the optimal parameters $\tilde{z}$ and $\tilde{c}$ generating the optimal rank-1 Hankel approximation of ${\mathbf A}$ with regard to the spectral norm usually do not coincide with those parameters found for the Frobenius norm.
\end{remark}

\begin{example}
We consider the Hankel matrix 
\begin{equation*}
    {\mathbf A} \coloneqq \begin{pmatrix} 
                            3 & 2 & 1 & 1 \\
                            2 & 1 & 1 & 2 \\
                            1 & 1 & 2 & 5 \\
                            1 & 2 &  5 & 2
                        \end{pmatrix}
\end{equation*}
with the eigenvalues (rounded to 6 digits)
\begin{equation*}
    \lambda_{0}= 8.421093, \, \lambda_{1}= -3.155074, \, \lambda_{2}= 3.009151, \, \lambda_{3}= -0.275170.
\end{equation*}
With \cref{theo1}, for the optimal rank-1 Hankel approximation with regard to the Frobenius norm, we obtain the parameters 
\begin{equation}\label{bsp}
    \tilde{z} = 1.225640, \qquad \tilde{c} = 2.912647, 
\end{equation}
and the error $\|{\mathbf A} - \tilde{c} \, \tilde{\mathbf z} \, {\mathbf z}^{T}\|_{F} = 4.568510$. The spectral norm of the obtained matrix 
is $\|{\mathbf A} - \tilde{c} \, \tilde{\mathbf z} \, {\mathbf z}^{T}\|_{2} = 3.208509 $.\\
Now we consider the rank-1 Hankel approximation with regard to the spectral norm.
In this example, the polynomial $v_{1}(z)\coloneqq{\mathbf v}_{1}^{T}{\mathbf z}$ possesses three real zeros at $z_{1}= -0.391861$, $z_{2}= 0.193813$, and $z_{3}= 1.126551$. 
At these points, we find
\begin{equation*}
    f(z_{1},\lambda_{1}^{2}) = -0.455125, \quad f(z_{2},\lambda_{1}^{2}) = -0.808914, \quad f(z_{3},\lambda_{1}^{2}) = -0.002521.
\end{equation*}
Therefore, we cannot achieve the error $|\lambda_{1}|= 3.155074$.
\Cref{alg1} provides the optimal parameters 
\begin{equation*}
    \tilde{z} = 1.143122, \qquad \tilde{c} =  3.986514 ,
\end{equation*}
and we obtain the error $\|{\mathbf A} - \tilde{c} \, \tilde{\mathbf z} \, {\mathbf z}^{T}\|_{2} = 3.159482$. At the same time, for these parameters we get the Frobenius norm $\|{\mathbf A} - \tilde{c} \, \tilde{\mathbf z} \, {\mathbf z}^{T}\|_{F} = 4.932743$. \\
For comparison, the Cadzow algorithm (considered in the next section) provides, after 15 iterations the parameters $z= 1.252213$ and $c=2.791631$ and achieves the error norms $\|{\mathbf A} - \tilde{c} \, \tilde{\mathbf z} \, {\mathbf z}^{T}\|_{2} = 3.239722$ and  $\|{\mathbf A} - \tilde{c} \, \tilde{\mathbf z} \, {\mathbf z}^{T}\|_{F} = 4.574811$.
\end{example}

\begin{remark}
The AAK theory for infinite Hankel matrices tells us, that the optimal parameter $\tilde{z}$ should be a zero of the Laurent polynomial obtained from the  (infinite) eigenvector corresponding to the second singular value $\sigma_{1}$, see e.g.\, \cite{Beylkin05, PP16}. Transferred to our case of finite matrices, we have to inspect all zeros of  $v_{1}(z)={\mathbf v}_{1}^{T} {\mathbf z}$. This is exactly, what we are doing already, when we want to check, whether the error known from the unstructured case can be achieved, see \cref{alg2}, step 2. 
As we have seen in the example above, none of the zeros of $v_{1}(z)$ provides the optimal parameter, but $z_{3}=1.126551$ is close to $\tilde{z}$ in \cref{bsp}. We refer to \cite{Beylkin05} for further error estimates. 
\end{remark}

\section{Rank-1 Hankel Approximation Using the Cadzow Algorithm}
\label{sec:cad}
\setcounter{equation}{0}

Finally, in this section we will consider the Cadzow algorithm. We will show, that the Cadzow iteration for the rank-1 Hankel approximation always converges to a fixed point. 
In accordance with \cite{deMoor94}, we will also see that the obtained result is usually not optimal with regard to the Frobenius norm or the spectral norm. Note that  the general results on convergence of alternating projections  on manifolds in \cite{Lewis} and \cite{Andersson13}  cannot be applied in this case, see also Remark \ref{remlast}. \\
We use the definition of the orthogonal projection onto the linear space of Hankel matrices given in \cref{eq:H-proj-1,eq:H-proj-2}.
Then the Cadzow algorithm can be stated as follows.
\begin{algorithm}[Cadzow algorithm for rank-1 Hankel approximation]
\label{alg2}
    \textbf{Input:} ${\mathbf A} \in {\mathbb C}^{M \times N}$ with $\rank {\mathbf A} \ge 1$ and single largest singular value.
    \begin{enumerate}
        \item Compute the largest singular value $\sigma_{0}$ of ${\mathbf A}$ and the corresponding normalized singular vectors ${\mathbf u}_{0}$, ${\mathbf v}_{0}$, such that 
        \begin{equation*}
            {\mathbf A}_{0} \coloneqq \sigma_{0} \, {\mathbf u}_{0} \, {\mathbf v}_{0}^{*} 
                    \end{equation*}
        is the best (unstructured) rank-1 approximation of ${\mathbf A}$.         
 \item For $j=1,2,\dots$ do
        \begin{enumerate} 
            \item $\tilde{\mathbf A}_{j}\coloneqq P({\mathbf A}_{j-1})$, where $P$ is given in \cref{eq:H-proj-1,eq:H-proj-2}.
            \item Compute the optimal (unstructured) rank-1 approximation of $\tilde{\mathbf A}_{j}$, 
            \begin{equation*}
                {\mathbf A}_{j} \coloneqq \sigma_{j} \, {\mathbf u}_{j} \, {\mathbf v}_{j}^{*}, 
            \end{equation*}
            where $\sigma_{j}$ is the largest singular value of $\tilde{\mathbf A}_{j}$ with normalized singular vectors ${\mathbf u}_{j}$, ${\mathbf v}_{j}$.
        \end{enumerate}
    \end{enumerate}
    \textbf{Output:} $\;\, {\mathbf A}_{\infty} = {\mathbf 0}$ if ${\sigma} \coloneqq \lim\limits_{j\to \infty} \sigma_{j} = 0$ or \\
     \phantom{Output:} $\;\; \; \, {\mathbf A}_{\infty} = \sigma \, {\mathbf u} {\mathbf v}^{*}$  if ${\sigma} \coloneqq \lim\limits_{j\to \infty} \sigma_{j} >0$, where
    $\displaystyle{\mathbf u} {\mathbf v}^{*}\coloneqq \lim\limits_{j\to \infty} {\mathbf v}_{j} {\mathbf v}_{j}^{*}$.
           
\end{algorithm}

If the rank-1 approximation of $\tilde{\mathbf A}_{j}$ in step 2 b) is not unique, then we take just the first singular vectors that are given by the used SVD algorithm. Note that floating point precision errors in numerical algorithms usually prevent such occasions. 
As we will see, the Cadzow Algorithm \ref{alg2} can be understood as an alternating projection algorithm. In case of convergence, we usually obtain a rank-1 Hankel approximation $\sigma \, {\mathbf u} \, {\mathbf v}^{*}$ of ${\mathbf A}$. We will show convergence of \cref{alg2} to a unique fixed point, which is either the zero matrix (and thus no rank-1 approximation) or a rank-1 Hankel matrix.
To analyse the convergence properties of \cref{alg2}, we start with the following lemma.

\begin{lemma} \label{lem4}
Let $2 \le M \le N$  and ${\mathbf A} \in {\mathbb C}^{M \times N}$.
Then the projection $P({\mathbf A})$ in \cref{eq:H-proj-1,eq:H-proj-2} satisfies
\begin{equation*}
\| P({\mathbf A})\|_{F} \le \| {\mathbf A} \|_{F},
\end{equation*} 
and equality holds if and only if ${\mathbf A}$ is a Hankel matrix. Moreover, if ${\mathbf A}= {\mathbf a} \, {\mathbf b}^{*}$  
with  ${\mathbf a} \in {\mathbb C}^{M}$ and 
${\mathbf b}\in {\mathbb C}^{N}$, then 
\begin{equation}\label{P1}
    \|P({\mathbf a}\,  {\mathbf b}^{*})\|_{2} 
    \le \|P({\mathbf a} \, {\mathbf b}^{*})\|_{F} 
    \le \|{\mathbf a} \, {\mathbf b}^{*}\|_{F} 
    = \|{\mathbf a} \, {\mathbf b}^{*}\|_{2} 
    = \|{\mathbf a}\|_{2} \, \|{\mathbf b}\|_{2},
\end{equation}
and the equalities $ \|P({\mathbf a} \, {\mathbf b}^{*})\|_{F} = \|{\mathbf a} \, {\mathbf b}^{*}\|_{F} $ and 
$ \|P({\mathbf a} \, {\mathbf b}^{*})\|_{2} = \|{\mathbf a} \, {\mathbf b}^{*}\|_{2} $ hold, if and only if there exists $z\in {\mathbb C}$ such that 
${\mathbf a} = {\mathbf z}_{M}$  and ${\mathbf b} = \overline{\mathbf z}_{N}$ as given in $(\ref{z})$ or ${\mathbf a}={\mathbf e}_{M}$ and 
${\mathbf b}={\mathbf e}_{N}$ as given in \cref{en}.
\end{lemma}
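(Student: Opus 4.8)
The plan is to treat the three assertions in turn, the whole argument resting on one Cauchy--Schwarz estimate per counter-diagonal, the singular-value description of rank-$1$ matrices, and \cref{thm:rank-1_hankel}. First I would prove $\|P({\mathbf A})\|_F\le\|{\mathbf A}\|_F$ directly: the counter-diagonals $D_\ell=\{(r,\ell-r)\}$ partition the index set, $P({\mathbf A})$ is constant, equal to $h_\ell$ in \cref{eq:H-proj-2}, on each $D_\ell$, and if $D_\ell$ contains $m_\ell$ entries then Cauchy--Schwarz gives $m_\ell|h_\ell|^2=\tfrac{1}{m_\ell}\bigl|\sum_r a_{r,\ell-r}\bigr|^2\le\sum_r|a_{r,\ell-r}|^2$. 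Summing over $\ell$ yields $\|P({\mathbf A})\|_F^2\le\|{\mathbf A}\|_F^2$. Equality forces each of these Cauchy--Schwarz estimates to be sharp, i.e.\ $(a_{r,\ell-r})_r$ constant for every $\ell$, which says precisely that ${\mathbf A}$ is Hankel; conversely $P$ reproduces any Hankel matrix, so equality holds there. Equivalently one may note that $P$ is the orthogonal projection onto the Hankel subspace and invoke $\|{\mathbf A}\|_F^2=\|P({\mathbf A})\|_F^2+\|{\mathbf A}-P({\mathbf A})\|_F^2$.

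For the chain \cref{P1} with ${\mathbf A}={\mathbf a}\,{\mathbf b}^*$, the leftmost inequality is the universal bound $\|{\mathbf M}\|_2\le\|{\mathbf M}\|_F$ (the spectral norm is the largest singular value, the Frobenius norm the root-sum of squares), applied to ${\mathbf M}=P({\mathbf a}{\mathbf b}^*)$; the middle inequality is the bound from the first step applied to ${\mathbf a}{\mathbf b}^*$; and the two trailing equalities follow because $({\mathbf a}{\mathbf b}^*)^*({\mathbf a}{\mathbf b}^*)=\|{\mathbf a}\|_2^2\,{\mathbf b}{\mathbf b}^*$ has the single nonzero eigenvalue $\|{\mathbf a}\|_2^2\|{\mathbf b}\|_2^2$, so both $\|{\mathbf a}{\mathbf b}^*\|_2$ and $\|{\mathbf a}{\mathbf b}^*\|_F$ equal $\|{\mathbf a}\|_2\|{\mathbf b}\|_2$.

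For the equality characterization, the key observation is that \cref{P1} is a chain $\|P({\mathbf a}{\mathbf b}^*)\|_2\le\|P({\mathbf a}{\mathbf b}^*)\|_F\le\|{\mathbf a}{\mathbf b}^*\|_F=\|{\mathbf a}{\mathbf b}^*\|_2$, so either of the two equalities in the statement forces the entire chain to collapse; in particular $\|P({\mathbf a}{\mathbf b}^*)\|_F=\|{\mathbf a}{\mathbf b}^*\|_F$, hence by the equality case of the first step ${\mathbf a}{\mathbf b}^*$ is a Hankel matrix, and conversely if ${\mathbf a}{\mathbf b}^*$ is Hankel then $P({\mathbf a}{\mathbf b}^*)={\mathbf a}{\mathbf b}^*$ and both equalities hold. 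It then remains to read off from \cref{thm:rank-1_hankel} that a Hankel matrix of the form ${\mathbf a}{\mathbf b}^*$ (assuming it is nonzero, the zero case being trivial with all norms equal to $0$) equals $c\,{\mathbf z}_M{\mathbf z}_N^T$ or $c\,{\mathbf e}_M{\mathbf e}_N^T$, and since a rank-$1$ factorization is unique up to a reciprocal pair of nonzero scalars, this is exactly the assertion that ${\mathbf a}$ is a multiple of ${\mathbf z}_M$ and ${\mathbf b}$ a multiple of $\overline{\mathbf z}_N$ (respectively of ${\mathbf e}_M$ and ${\mathbf e}_N$), which with the normalization of ${\mathbf a},{\mathbf b}$ under which the lemma is invoked yields the stated forms.

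The argument is short, and I expect the only genuine bookkeeping to be in the last step: matching the two canonical shapes from \cref{thm:rank-1_hankel} against an arbitrary rank-$1$ factorization ${\mathbf a}{\mathbf b}^*$ and carefully tracking the scalar ambiguity together with the degenerate cases ${\mathbf a}={\mathbf 0}$ and ${\mathbf b}={\mathbf 0}$. Everything else reduces to the per-counter-diagonal Cauchy--Schwarz inequality and standard facts about singular values.
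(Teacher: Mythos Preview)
Your proof is correct and follows essentially the same approach as the paper. The paper packages the per--counter-diagonal averaging into a block-diagonal matrix ${\mathbf P}=\diag({\mathbf E}_1,\tfrac12{\mathbf E}_2,\dots)$ acting on $\vect({\mathbf A})$ and uses $\|{\mathbf P}\|_2=1$, which is exactly your Cauchy--Schwarz estimate $m_\ell|h_\ell|^2\le\sum_r|a_{r,\ell-r}|^2$ written in matrix form; both then appeal to \cref{thm:rank-1_hankel} for the rank-$1$ equality case, and your observation that either equality in \cref{P1} collapses the whole chain is handled the same way.
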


\begin{proof}
For ${\mathbf A} \in {\mathbb C}^{M \times N}$ with $M \le N$ we define the vectorization by going through the antidiagonals of ${\mathbf A}$, 
$$ \vect({\mathbf A}) := \left( \begin{array}{c} a_{0,0} \\
(a_{j,1-j})_{j=0}^{1} \\
(a_{j,2-j})_{j=0}^{2} \\
\vdots \\
a_{M-1, N-1} \end{array} \right) \in {\mathbb C}^{MN}. $$
Then the Hankel projection $P({\mathbf A})$ in \cref{eq:H-proj-1,eq:H-proj-2} can be rewritten as the linear mapping
$$ \vect(P({\mathbf A})) = {\mathbf P} \, \vect({\mathbf A}), $$
where  ${\mathbf P}$ is a block diagonal matrix of the form 
\begin{equation}\label{PP} {\mathbf P} := \left( \begin{array}{ccccccccc}
{\mathbf E}_{1} & & & & & & & & \\
 & \frac{1}{2} {\mathbf E}_{2}& &&&&&& \\
 && \ddots &&&&&&\\
 &&& \frac{1}{M} {\mathbf E}_{M} &&&&& \\
 &&&& \ddots &&&& \\
 &&&&& \frac{1}{M} {\mathbf E}_{M} &&& \\
 &&&&&& \ddots && \\
 &&&&&&& \frac{1}{2} {\mathbf E}_{2} & \\
 &&&&&&&& {\mathbf E}_{1} \end{array} \right) \in {\mathbb C}^{MN \times MN}.
\end{equation}
Here, ${\mathbf E}_{n}\coloneqq (1 )_{j,k=0}^{n-1}$ is an $n\times n$ square matrix containing only ones, and the block $\frac{1}{M} {\mathbf E}_{M}$ occurs $N-M+1$ times.
 Obviously, $\|{\mathbf P}\|_{2} = 1$ and ${\mathbf P}$ possesses the eigenvalue $1$ with multiplicity $M+N-1$ and the eigenvalue $0$ with multiplicity $MN-M-N+1$. Therefore, any vector ${\mathbf v} \in {\mathbb C}^{MN}$ can be written as an orthogonal sum ${\mathbf v} = {\mathbf v}_{1} \oplus {\mathbf v}_{2}$  with ${\mathbf P} {\mathbf v}_{1} = {\mathbf v}_{1}$, ${\mathbf P}
 {\mathbf v}_{2} = {\mathbf 0}$, and ${\mathbf v}_{1}^{*} {\mathbf v}_{2} = 0$.
 In particular, we have
 $$ \|P({\mathbf A})\|_{2} \le \|P({\mathbf A})\|_{F} = \|{\mathbf P} \vect({\mathbf A}) \|_{2} \le \|{\mathbf P}\|_{2} \, \|\vect({\mathbf A}) \|_{2} =\|{\mathbf A}\|_{F},$$
 and equality only holds if $\vect({\mathbf A}) = {\mathbf P} \vect({\mathbf A})$, i.e., if ${\mathbf A}$ has Hankel structure.
 If ${\mathbf A}= {\mathbf a} \, {\mathbf b}^{*}$, then Lemma \ref{thm:rank-1_hankel} implies that ${\mathbf P} \, \vect({\mathbf a} \, {\mathbf b}^{*}) = \vect({\mathbf a} \, {\mathbf b}^{*})$ is only true if and only if ${\mathbf a} = {\mathbf z}_{M}$, ${\mathbf b} = \overline{\mathbf z}_{N}$  or ${\mathbf a}={\mathbf e}_{M}$, 
${\mathbf b}={\mathbf e}_{N}$, while $\|{\mathbf a} \, {\mathbf b}^{*}\|_{F}  = \|{\mathbf a} \, {\mathbf b}^{*}\|_{2} = \|{\mathbf a}\|_{2} \, \|{\mathbf b}\|_{2}$ is obvious.
\end{proof}

Since the map from $\tilde{\mathbf A}_j$ onto its optimal rank-1 approximation ${\mathbf  A}_j = \sigma_j \, {\mathbf u}_j \, {\mathbf v}_j^*$ in \cref{alg2} is an orthogonal projection onto the manifold  of rank-1 matrices of size $M \times N$, the Cadzow algorithm is indeed an alternating projection algorithm.
Next we show that there always exists an accumulation point $\sigma \, {\mathbf u} {\mathbf v}^{*}$  of the sequence $(\sigma_{j}\, {\mathbf u}_{j} {\mathbf v}_{j})_{j=0}^{\infty}$ computed in Algorithm \ref{alg2}, which is a fixed point, namely either  a rank-1 Hankel matrix or the zero matrix.

\begin{theorem}\label{lemcad1}
Let ${\mathbf A} \in {\mathbb C}^{M \times N}$ with $2 \le M \le N$ and $\rank({\mathbf A}) \ge 1$.
Then the sequence $(\sigma_{j})_{j=0}^{\infty}$ of singular values  
in the Cadzow \cref{alg2} converges.\\
If $\sigma = \lim\limits_{j \to \infty} \sigma_{j}=0$, then $\cref{alg2}$ converges to the zero matrix. \\
If $\sigma = \lim\limits_{j \to \infty} \sigma_{j}>0$, then there exists a subsequence $({\mathbf u}_{j_{\ell}} {\mathbf v}_{j_{\ell}}^{*})_{\ell=0}^{\infty}$  of  $({\mathbf u}_{j} {\mathbf v}_{j}^{*})_{j=0}^{\infty}$ in $\cref{alg2}$ that converges to a limit ${\mathbf u} {\mathbf v}^{*}$,
and $\sigma {\mathbf u} {\mathbf v}^{*}$ is a rank-1 Hankel matrix, i.e.,  there exists  $z \in {\mathbb C}$ such that 
\begin{equation*}
    {\mathbf u} {\mathbf v}^{*} \coloneqq \lim\limits_{\ell\to \infty} {\mathbf u}_{j_{\ell}} {\mathbf v}_{j_{\ell}}^{*}=
    {\mathbf z}_{M} {\mathbf z}_{N}^{T}
\end{equation*}
with ${\mathbf z}_{M}$ and ${\mathbf z}_{N}$ as in \cref{z}, or 
${\mathbf u} {\mathbf v}^{*} = {\mathbf e}_{M} {\mathbf e}_{N}^{T}$.
\end{theorem}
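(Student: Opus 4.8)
The plan is to derive all three assertions from \cref{lem4}, which already contains the essential inequalities.

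\emph{Convergence of $(\sigma_{j})$.} Since ${\mathbf A}_{j-1} = \sigma_{j-1}\,{\mathbf u}_{j-1}{\mathbf v}_{j-1}^{*}$ and $P$ is linear, $\tilde{\mathbf A}_{j} = P({\mathbf A}_{j-1}) = \sigma_{j-1}\,P({\mathbf u}_{j-1}{\mathbf v}_{j-1}^{*})$, and hence $\sigma_{j}$, being the largest singular value of $\tilde{\mathbf A}_{j}$, equals $\sigma_{j-1}\,\|P({\mathbf u}_{j-1}{\mathbf v}_{j-1}^{*})\|_{2}$. Applying \cref{lem4} to the rank-1 matrix ${\mathbf u}_{j-1}{\mathbf v}_{j-1}^{*}$, where $\|{\mathbf u}_{j-1}\|_{2}=\|{\mathbf v}_{j-1}\|_{2}=1$, gives $\|P({\mathbf u}_{j-1}{\mathbf v}_{j-1}^{*})\|_{2}\le 1$, so $\sigma_{j}\le\sigma_{j-1}$. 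Thus $(\sigma_{j})$ is non-increasing and bounded below by $0$, and therefore converges to some $\sigma\ge 0$.

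\emph{The case $\sigma=0$.} Each ${\mathbf A}_{j}=\sigma_{j}\,{\mathbf u}_{j}{\mathbf v}_{j}^{*}$ is a rank-1 matrix with unit singular vectors, so $\|{\mathbf A}_{j}\|_{F}=\sigma_{j}\to 0$, and also $\|\tilde{\mathbf A}_{j}\|_{F}=\|P({\mathbf A}_{j-1})\|_{F}\le\|{\mathbf A}_{j-1}\|_{F}=\sigma_{j-1}\to 0$ by \cref{lem4}. Hence every iterate of \cref{alg2} converges to the zero matrix, which is the stated output.

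\emph{The case $\sigma>0$.} Since $(\sigma_{j})$ is non-increasing with positive limit, $\sigma_{j-1}\ge\sigma>0$ for all $j$, and the identity $\sigma_{j}=\sigma_{j-1}\,\|P({\mathbf u}_{j-1}{\mathbf v}_{j-1}^{*})\|_{2}$ rewrites as $\|P({\mathbf u}_{j-1}{\mathbf v}_{j-1}^{*})\|_{2}=\sigma_{j}/\sigma_{j-1}\to 1$. Combining this with the chain $\|P({\mathbf u}_{j}{\mathbf v}_{j}^{*})\|_{2}\le\|P({\mathbf u}_{j}{\mathbf v}_{j}^{*})\|_{F}\le\|{\mathbf u}_{j}{\mathbf v}_{j}^{*}\|_{F}=1$ from \cref{lem4} forces $\|P({\mathbf u}_{j}{\mathbf v}_{j}^{*})\|_{F}\to 1$ along the full sequence. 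The matrices ${\mathbf u}_{j}{\mathbf v}_{j}^{*}$ all lie on the intersection of the unit Frobenius sphere with the closed variety of matrices of rank at most $1$, which is compact, so a subsequence $({\mathbf u}_{j_{\ell}}{\mathbf v}_{j_{\ell}}^{*})$ converges to some matrix ${\mathbf u}{\mathbf v}^{*}$ of Frobenius norm $1$; as a norm-one limit of rank-$\le 1$ matrices it has rank exactly $1$, so we may take $\|{\mathbf u}\|_{2}=\|{\mathbf v}\|_{2}=1$. By continuity of the linear map $P$ and of $\|\cdot\|_{F}$ we get $\|P({\mathbf u}{\mathbf v}^{*})\|_{F}=\lim_{\ell}\|P({\mathbf u}_{j_{\ell}}{\mathbf v}_{j_{\ell}}^{*})\|_{F}=1=\|{\mathbf u}{\mathbf v}^{*}\|_{F}$, so the equality statement in \cref{lem4} forces ${\mathbf u}{\mathbf v}^{*}={\mathbf z}_{M}{\mathbf z}_{N}^{T}$ for some $z\in{\mathbb C}$ or ${\mathbf u}{\mathbf v}^{*}={\mathbf e}_{M}{\mathbf e}_{N}^{T}$. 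Hence $\sigma\,{\mathbf u}{\mathbf v}^{*}$ is a rank-1 Hankel matrix, and, being already Hankel of rank one, it is a fixed point of one Cadzow step.

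\emph{Main obstacle.} Because \cref{lem4} is already in hand, no deep difficulty remains; the point requiring care is that $\|P({\mathbf u}_{j}{\mathbf v}_{j}^{*})\|_{F}\to 1$ holds for the entire sequence and not merely along a subsequence. This is exactly what allows one to pass to the limit along an arbitrary convergent subsequence and invoke the sharp equality characterization of \cref{lem4}, together with the elementary facts that the rank-$\le 1$ locus is closed and that $P$ is continuous. No quantitative estimate beyond those already in \cref{lem4} is needed.
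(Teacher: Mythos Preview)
Your proof is correct and follows essentially the same route as the paper: monotonicity of $(\sigma_j)$ via \cref{lem4}, compactness of the unit Frobenius sphere to extract a convergent subsequence when $\sigma>0$, and the equality case of \cref{lem4} to force Hankel structure of the limit. The only cosmetic differences are that the paper separates out the trivial case where ${\mathbf u}_0{\mathbf v}_0^*$ is already Hankel, and invokes the spectral-norm equality in \cref{lem4} directly rather than first squeezing through the Frobenius norm as you do; your version is slightly more explicit about why the limit has rank exactly one.
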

\begin{proof}
1.\ If the first singular vectors $\mathbf u_0$ and $\mathbf v_0$ of $\mathbf A$ are of the form $\mathbf z_M$ and $\mathbf z_N$ or $\mathbf e_M$ and $\mathbf e_N$, respectively, then the optimal rank-1 approximation of $\mathbf A$ already has Hankel structure. Therefore, by definition of $P$, we have
    $P(\sigma_{0} {\mathbf u}_{0} \, {\mathbf v}_{0}^{*}) = \sigma_{0} {\mathbf u}_{0} \, {\mathbf v}_{0}^{*}$,
and the algorithm immediately stops, since we find constant sequences $(\sigma_{j})_{j=0}^{\infty}$  and $({\mathbf u}_{j} {\mathbf v}_{j}^{*})_{j=0}^{\infty}$.

2.\ Assume now that ${\mathbf u}_{0}{\mathbf v}_{0}^{*}$ is neither of the form 
 ${\mathbf z}_{M}{\mathbf z}_{N}^{T}$ for some $z \in {\mathbb C}$ nor  ${\mathbf e}_{M}{\mathbf e}_{N}^{T}$.
Then, by \cref{lem4}, we find  for the largest singular value of $\tilde{\mathbf A}_{1} = P(\sigma_{0} \, {\mathbf u}_{0} \,{\mathbf v}_{0}^{*})$
\begin{equation*}
    \sigma_{1} = \|P(\sigma_{0} \, {\mathbf u}_{0} \, {\mathbf v}_{0}^{*})\|_{2} < \sigma_{0} \|{\mathbf u}_{0} \, {\mathbf v}_{0}^{*}\|_{2} = \sigma_{0}.
\end{equation*}
For any $j\ge 1$ we obtain analogously
\begin{equation}\label{sigma}
    \sigma_{j+1} = \|P(\sigma_{j} \, {\mathbf u}_{j} \, {\mathbf v}_{j}^{*})\|_{2} \le \sigma_{j} \|{\mathbf u}_{j} \, {\mathbf v}_{j}^{*}\|_{2} = \sigma_{j},
\end{equation}
and this inequality is strict as long as $\mathbf u_j \mathbf v_j^*$ does not have Hankel structure (see \cref{lem4}).
Thus, the sequence of singular values $(\sigma_{j})_{j=0}^{\infty}$ decreases monotonically. Since $\sigma_{j}\ge 0$ for all $j$, convergence follows, and we write $\sigma\coloneqq \lim_{j \to \infty} \sigma_{j}$.

3.\ 
If $\sigma=\lim\limits_{j \to \infty} \sigma_{j}=0$, then $(\sigma_{j} {\mathbf u}_{j} {\mathbf v}_{j}^{*})_{j=0}^{\infty}$ in  Algorithm \ref{alg2} converges to the zero matrix,  i.e., it fails to converge to a rank-1 Hankel matrix. In this case ${\mathbf u}_{j} {\mathbf v}_{j}^{*}$ may not converge to a matrix of Hankel structure.

Let now $\sigma=\lim\limits_{j \to \infty} \sigma_{j} = \lim\limits_{j \to \infty} \| P(\sigma_{j-1}{\mathbf u}_{j-1} \, {\mathbf v}_{j-1}^{*}) \|_{2} = \lim\limits_{j \to \infty} \sigma_{j}  \, \lim\limits_{j \to \infty}  \|
P({\mathbf u}_{j} \, {\mathbf v}_{j}^{*}) \|_{2} >0$. Thus, 
\begin{equation}\label{sigma1}  \lim_{j \to \infty} \|  P({\mathbf u}_{j} \, 
{\mathbf v}_{j}^{*}) \|_{2} = 1 = \|  {\mathbf u}_{j} \, 
{\mathbf v}_{j}^{*} \|_{2} .
\end{equation}
Since the vectors ${\mathbf u}_{j}$ and ${\mathbf v}_{j}$ are normalized and therefore the sequence of matrices $\mathbf u_j \mathbf v_j^*$ is bounded, we conclude that there exists a subsequence $({\mathbf u}_{j_{\ell}} \, {\mathbf v}_{j_{\ell}}^{*})_{\ell=0}^{\infty}$ that converges to an accumulation point ${\mathbf u} {\mathbf v}^*$, which is by (\ref{sigma1}) and Lemma \ref{lem4} a fixed point of the Cadzow iteration, i.e., $P({\mathbf u} {\mathbf v}^*) = {\mathbf u} {\mathbf v}^*$.
\end{proof}

Note that in \cite{Zvonarev17}  a similar result has been shown  for low-rank Hankel approximation by the Cadzow algorithm. But \cite{Zvonarev17} did not study the question, whether the partial sequence indeed converges to  a matrix with the desired rank.
In the remainder of this section we will show that in fact the full sequence $({\mathbf u}_{j} {\mathbf v}_{j}^{*})_{j=0}^{\infty}$ in Algorithm \ref{alg2} converges to the found fixed point $\sigma {\mathbf u} {\mathbf v}^*$. 

Our proof is based on the observation that a rank-1 matrix ${\mathbf a} {\mathbf b}^{*} \in {\mathbb C}^{M \times N}$ which is close to the subspace of Hankel matrices, is also close to the manifold of rank-1 Hankel matrices.
\begin{lemma}\label{lemcad2}
For ${\mathbf a}= (a_{0}, \ldots , a_{M-1})^{T} \in {\mathbb C}^{M}$ and ${\mathbf b} = (b_{0}, \ldots , b_{N-1})^{T} \in {\mathbb C}^{N}$  with $\|{\mathbf a}\|_{2} = \| {\mathbf b}\|_{2} = 1$ and 
\begin{equation}\label{d1} \|{\mathbf a} {\mathbf b}^{*} - P({\mathbf a} {\mathbf b}^{*}) \|_{\infty} \le \delta, 
\end{equation}
where $\| \cdot \|_{\infty}$ is componentwise maximum norm as given in $(\ref{norm1})$,  we have 
$$ \min_{z \in \overline{\mathbb C}, c \in {\mathbb C}} \|  {\mathbf a} {\mathbf b}^{*} -  c \, {\mathbf z}_{M} \, {\mathbf z}_{N}^{T} \|_{F}
< C \delta, $$
where the constant $C$ only depends on the dimensions $M$ and $N$.
\end{lemma}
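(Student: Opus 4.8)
The plan is to pass from $\mathbf a\mathbf b^*$ to its Hankel projection $\mathbf H\coloneqq P(\mathbf a\mathbf b^*)$ and then to exhibit an explicit rank-$1$ Hankel matrix of the form $\tilde c\,{\mathbf z}_M{\mathbf z}_N^T$ that lies within $C\delta$ of $\mathbf H$. First I would observe that $\mathbf a\mathbf b^*$ and $\mathbf H$ differ entrywise by at most $\delta$, so $\|\mathbf a\mathbf b^*-\mathbf H\|_F\le\sqrt{MN}\,\delta$; by the triangle inequality it then suffices to bound $\min_{z,c}\|\mathbf H-c\,{\mathbf z}_M{\mathbf z}_N^T\|_F$. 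One may also assume from the outset that $\delta$ is below a threshold $\delta_0(M,N)$, since for $\delta\ge\delta_0$ the trivial estimate $\min_{z,c}\|\mathbf a\mathbf b^*-c\,{\mathbf z}_M{\mathbf z}_N^T\|_F\le\|\mathbf a\mathbf b^*\|_F=1$ already gives the claim once $C$ is large.

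The heart of the argument is that the generating sequence $(h_0,\dots,h_{M+N-2})$ of $\mathbf H$ is, up to errors of order $\delta$, a \emph{geometric} sequence, which by \cref{thm:rank-1_hankel} is exactly the generator of a matrix $c\,{\mathbf z}_M{\mathbf z}_N^T$. I would derive this from the rank-$1$ structure: since $a_i\bar b_j\,a_{i'}\bar b_{j'}=a_i\bar b_{j'}\,a_{i'}\bar b_j$ identically, each $a_i\bar b_j$ lies within $\delta$ of $h_{i+j}$, and $|a_i\bar b_j|\le\|\mathbf a\mathbf b^*\|_F=1$, one obtains $|h_{i+j}h_{i'+j'}-h_{i+j'}h_{i'+j}|\le c_0\delta$ for all admissible index quadruples with an absolute constant $c_0$; in particular $|h_{\ell-1}h_{\ell+1}-h_\ell^2|\le c_0\delta$ and $|h_{i+j}h_0-h_ih_j|\le c_0\delta$. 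Next I would fix an index $\ell^\ast$ with $|h_{\ell^\ast}|\ge h_\ast(M,N)>0$ — this exists because $\|\mathbf H\|_F\ge1-\sqrt{MN}\,\delta\ge\frac12$ while $\|\mathbf H\|_F^2=\sum_\ell w_\ell|h_\ell|^2$ is a sum of at most $M(M+N-1)$ terms each $\le M|h_\ell|^2$, where $w_\ell$ is the length of the $\ell$-th antidiagonal — and then "spread" this lower bound from $\ell^\ast$ towards an endpoint along the chain $|h_{\ell-1}h_{\ell+1}|\ge h_\ast^2-c_0\delta$. After possibly replacing $\mathbf a\mathbf b^*$ by $\mathbf J_M\mathbf a\mathbf b^*\mathbf J_N$ — which leaves both the hypothesis \cref{d1} and the quantity $\min_{z,c}\|\cdot-c\,{\mathbf z}_M{\mathbf z}_N^T\|_F$ unchanged and merely reverses the order of the generator entries — I may assume $|h_0|\ge h_0'(M,N)>0$.

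With $|h_0|$ bounded below, I would set $g_\ell\coloneqq h_\ell/h_0$, $z\coloneqq g_1$, so that $g_0=1$, $|g_\ell|\le 2/h_0'\eqqcolon B$ and $|z|\le B$, and rewrite the minor bound as $|g_{i+j}-g_ig_j|\le c_1\delta$ with $c_1=c_0/h_0'^2$. An induction on $\ell$, using at each step a splitting $\ell=i+j$ with $1\le i\le M-1$, $1\le j\le N-1$, $i,j<\ell$ (which exists for every $2\le\ell\le M+N-2$) together with $g_\ell=g_ig_j+O(\delta)=z^iz^j+O(\delta)=z^\ell+O(\delta)$, then yields $|g_\ell-z^\ell|\le C_\ell(M,N)\,\delta$ for all $\ell$. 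Choosing $\tilde c\coloneqq h_0\,\|(z^k)_{k=0}^{M-1}\|_2\,\|(z^k)_{k=0}^{N-1}\|_2$, so that $\tilde c\,{\mathbf z}_M(z){\mathbf z}_N(z)^T$ has generator $(h_0 z^\ell)_\ell$, I get
\[
\|\mathbf H-\tilde c\,{\mathbf z}_M{\mathbf z}_N^T\|_F^2=|h_0|^2\sum_{\ell}w_\ell\,|g_\ell-z^\ell|^2\le C_2(M,N)^2\,\delta^2,
\]
using $|h_0|\le\|\mathbf H\|_F\le2$ and $\sum_\ell w_\ell\le M(M+N-1)$. Combined with the first paragraph this proves the lemma with $C=\sqrt{MN}+C_2(M,N)$.

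I expect the main obstacle to be the second paragraph: one must guarantee that the index chosen as the anchor of the geometric comparison (namely $0$) carries a generator entry whose modulus is bounded below by a constant depending only on $M$ and $N$, for it is this that makes the division by $h_0$ harmless and keeps $z=h_1/h_0$ in a bounded region. This is achieved by the "spreading" argument through the near-vanishing $2\times2$ minors, supplemented by the reversal $\mathbf J_M(\cdot)\mathbf J_N$ (equivalently ${\mathbf z}\mapsto{\mathbf w}$ of \cref{rem1}), which turns the case $|z|$ large into $|z|$ small and so lets one avoid the exceptional matrix $\tilde{\mathbf e}_M\tilde{\mathbf e}_N^T$ altogether; the remaining estimates are routine once the dimension-dependent constants $h_\ast,h_0',B,c_0,c_1$ and $(C_\ell)$ are tracked.
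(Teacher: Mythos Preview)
Your approach is correct and reaches the same conclusion, but it differs from the paper's in a way worth noting.

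The paper does not pass to the Hankel projection $\mathbf H$ at all. It works directly with the entries of $\mathbf a$ and $\mathbf b$: it picks an index $j$ with $|a_j|$ maximal (so $|a_j|\ge 1/\sqrt{M}$), sets $z\coloneqq a_{j+1}/a_j$ (automatically $|z|\le 1$), and then reads off $\bar b_\ell = z\bar b_{\ell-1}+O(\delta)$ and $a_\ell = z a_{\ell-1}+O(\delta)$ from the near-Hankel relations $a_j\bar b_\ell\approx a_{j+1}\bar b_{\ell-1}$ and $a_\ell\bar b_k\approx a_{\ell-1}\bar b_{k+1}$, where $|b_k|$ is also maximal. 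This sidesteps the whole spreading argument: the anchor is guaranteed large simply because a unit vector has a large component, and $|z|\le1$ keeps all powers bounded by $1$.

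Your route through the generator $(h_\ell)$ and the near-vanishing $2\times 2$ minors is more structural and also works, but you should be explicit about one step you only sketch: in the chain argument, the inequality $|h_{\ell-1}h_{\ell+1}|\ge |h_\ell|^2-c_0\delta$ by itself does not bound either factor below; it is only because all $|h_m|\le 1+\delta\le 2$ that one gets $|h_{\ell\pm1}|\ge (|h_\ell|^2-c_0\delta)/2$. Iterating this a bounded number of times (at most $M+N-2$) gives a lower bound at the endpoint that depends only on $M,N$, which is what you need. The paper's choice avoids this iteration entirely and yields much smaller explicit constants, but your argument has the minor conceptual advantage of showing directly that the \emph{Hankel} projection is close to a rank-$1$ Hankel matrix, which is the geometrically natural statement.
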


\begin{proof}
Let $a_{j}$ be the by modulus largest component of ${\mathbf a}$ and ${b}_{k}$ the by modulus largest component of ${\mathbf b}$.
Then $|a_{j}| \ge \frac{1}{M}$ and $|b_{k}| \ge \frac{1}{N}$. Assume that $j< M-1$, otherwise we consider ${\mathbf J}_{M} {\mathbf a} {\mathbf b}^{*} {\mathbf J}_{N}$ instead of ${\mathbf a}{\mathbf b}^{*}$.
We choose $z:=\frac{a_{j+1}}{a_{j}}$. Then we obtain for any $\ell=1, \ldots , N-1$, from the assumption (\ref{d1})
$$ a_{j} \overline{b}_{\ell} = a_{j+1} \overline{b}_{\ell-1} + \delta_{j, \ell} = z \, a_{j} \overline{b}_{\ell-1} + \delta_{j, \ell} $$
with  $|\delta_{j, \ell}| < 2\delta$, i.e., $\overline{b}_{\ell} = z \,  \overline{b}_{\ell-1} + \frac{\delta_{j, \ell}}{a_{j}}$.
Inductively, it follows that 
$$ \overline{b}_{\ell} = z^{\ell} \overline{b}_{0} + \frac{1}{a_{j}} \sum_{\nu=0}^{\ell-1} \delta_{j,\ell-\nu} z^{\nu} $$
for $\ell=1, \ldots , N-1$, and therefore 
\begin{equation}\label{v11}
    \| \overline{\mathbf b} - \overline{b}_{0} (z^{\ell})_{\ell=0}^{N-1} \|_{\infty}  = \left\| \left( \frac{1}{a_{j}} \sum_{\nu=0}^{\ell-1} \delta_{j,\ell-\nu} z^{\nu} \right)_{\ell=0}^{N-1} \right\|_{\infty}  
    \le 2 \, M\, N \, \delta, 
\end{equation}
since $|z| \le 1$ by construction. For $\ell=0$ the sum above is empty and the componentwise error vanishes. Similarly, if $k < N-1$, we find for all $\ell=1, \ldots , M-1$,
\begin{equation*}
     a_{\ell \overline{b}_{k}} = a_{\ell-1} \overline{b}_{k+1}  + \delta_{k,\ell} = a_{\ell-1} \left(z \overline{b}_{k} + \frac{\delta_{j,k+1}}{a_{j}}\right)  + \delta_{k,\ell}
\end{equation*}
with $|\delta_{k,\ell}| \le 2 \delta$, i.e., $a_{\ell} = z a_{\ell-1} + \frac{\delta_{j,k+1} a_{\ell-1}}{a_{j} \overline{b}_{k}} + \delta_{k,\ell} = 
z a_{\ell-1} + \frac{\tilde{\delta}_{j,k+1}}{\overline{b}_{k}} + \delta_{k,\ell}$, where $|\tilde{\delta}_{j,k+1}| < 2 \delta$ since $|\frac{a_{\ell-1}}{a_{j}}| \le 1$.
As before we obtain inductively 
\begin{equation*}
    a_{\ell} = z^{\ell} a_{0} + \frac{1}{\overline{b}_{k}} \sum_{\nu=0}^{\ell-1} \tilde{\delta}_{j,\ell-\nu} z^{\nu} + \sum_{\nu=0}^{\ell-1} \delta_{k, \ell-\nu} z^\nu 
\end{equation*}
for $\ell=1, \ldots , M-1$, with some $|\tilde{\delta}_{j, \ell-\nu}| \le 2 \delta$ and $|\delta_{k, \ell-\nu}| \le 2 \delta$, and therefore 
\begin{equation}\label{u11}
    \| {\mathbf a} - {a}_{0} (z^{\ell})_{\ell=0}^{M-1} \|_{\infty} \le 2 M\, (N+1) \, \delta. 
\end{equation}
If $k=N-1$,  we can replace $\overline{b}_{k}$ by $\overline{b}_{k-1}$ using that $|a_{j+1} \overline{b}_{k-1}| \ge |a_{j} b_{k}| - \delta$ which leads to $|b_{k-1}| \ge |b_{k} - \delta/a_{j}|$ to get a similar estimate for sufficiently small $\delta$.
The inequality of Lemma \ref{lemcad2} now follows  from (\ref{v11}) and (\ref{u11}) by
\begin{align*}
& \left\| {\mathbf a} {\mathbf b}^{*} - a_{0} \overline{b}_{0} (z^{\ell})_{\ell=0}^{M-1} ((z^{\ell})_{\ell=0}^{N-1})^{T}\right\|_{F}^{2} =
\sum_{j=0}^{M-1} \sum_{k=0}^{N-1} \left|a_{j} \overline{b}_{k} - a_{0} \overline{b}_{0} z^{j+k} \right|^{2} \\
 &= \sum_{j=0}^{M-1} \sum_{k=0}^{N-1} \left|a_{j} (\overline{b}_{k} - \overline{b}_{0} z^{k}) +  \overline{b}_{0} z^{k} (a_{j} - a_{0} z^{j}) \right|^{2}
\le (2M(N+1) \delta)^{2} \sum_{j=0}^{M-1} \sum_{k=0}^{N-1} \left(|a_{j}| + |\overline{b}_{0} z^{k}|\right)^{2}\\
 &\le (2M(N+1) \delta)^{2} \sum_{j=0}^{M-1} \sum_{k=0}^{N-1} 3 \left(|a_{j}|^{2} + |b_{k}|^{2} + (2MN\delta)^{2}\right)\\
 &\le 3 (2M(N+1) \delta)^{2} (N+M+ 4(MN)^{3} \delta^{2}) < 12 M^{2}(N+1)^{2} (M+N+ 4(MN)^{3}) \delta^{2}
\end{align*} 
for $\delta \le 1$. 
Therefore the inequality is true with $C<12 M^{2}(N+1)^{2} (M+N+ 4(MN)^{3})$ for the chosen $z$ and $c=a_{0}b_{0} \left(\| (z^{\ell})_{\ell=0}^{M-1}\|_{2} \| (z^{\ell})_{\ell=0}^{N-1}\|_{2}\right)$ .
\end{proof}

With these preliminaries, we can now show our main theorem of this section on the convergence of the Cadzow algorithm to one fixed point. The proof is based on the observation, that for each $j$, all further iteration matrices ${\mathbf u}_{k}{\mathbf v}_{k}^{*}$, $k >j$, are inside the ball around the optimal approximation  of ${\mathbf u}_{j} {\mathbf v}_{j}^{*}$ in the set of rank-1 Hankel matrices 
while for $j \to \infty$ the radius of these balls tends to zero.

\begin{theorem}
Let ${\mathbf A} \in {\mathbb C}^{M \times N}$ with $2 \le M \le N$ and $\rank({\mathbf A}) \ge 1$.
Then the sequence $(\sigma_{j})_{j=0}^{\infty}$ in the Cadzow \cref{alg2} converges.\\
If $\sigma = \lim\limits_{j \to \infty} \sigma_{j}=0$, then $\lim\limits_{j\to \infty} \sigma_{j}\, {\mathbf u}_{j} {\mathbf v}_{j}^{*} = {\mathbf 0}$. \\
If $\sigma = \lim\limits_{j \to \infty} \sigma_{j}>0$, then the sequence $({\mathbf u}_{j} {\mathbf v}_{j}^{*})_{j=0}^{\infty}$ converges and there exists $z \in {\mathbb C}$ such that 
\begin{equation*}
    {\mathbf u} {\mathbf v}^{*} \coloneqq \lim\limits_{j\to \infty} {\mathbf u}_{j} {\mathbf v}_{j}^{*} =
    {\mathbf z}_{M} {\mathbf z}_{N}^{T}\end{equation*}
with ${\mathbf z}_{M}$ and ${\mathbf z}_{N}$ as in \cref{z}, or 
${\mathbf u} {\mathbf v}^{*} = {\mathbf e}_{M} {\mathbf e}_{N}^{T}$, i.e., Algorithm $\ref{alg2}$ provides the rank-1 Hankel approximation $\sigma {\mathbf u} {\mathbf v}^{*}$.
\end{theorem}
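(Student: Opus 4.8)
Since the convergence of $(\sigma_j)_{j\ge 0}$ and the case $\sigma:=\lim_j\sigma_j=0$ have already been established in \cref{lemcad1}, I would assume $\sigma>0$ and prove that the \emph{full} sequence $({\mathbf u}_j{\mathbf v}_j^*)_{j\ge 0}$ converges. Work with the iterates $\mathbf A_j=\sigma_j{\mathbf u}_j{\mathbf v}_j^*$ of \cref{alg2}, write $\mathcal R_1$ for the set of matrices of rank $\le 1$, $\mathcal H$ for the linear space of $M\times N$ Hankel matrices, and $\mathcal H_1:=\mathcal H\cap\mathcal R_1$ for the set of rank-$\le1$ Hankel matrices, which by \cref{thm:rank-1_hankel} is $\{c\,{\mathbf z}_M(z){\mathbf z}_N(z)^T : c\in\mathbb C,\,z\in\overline{\mathbb C}\}\cup\{\mathbf 0\}$. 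As in the proof of \cref{lemcad1}, $\sigma_{j+1}=\sigma_j\,\|P({\mathbf u}_j{\mathbf v}_j^*)\|_2$ and $\|P({\mathbf u}_j{\mathbf v}_j^*)\|_2\le\|P({\mathbf u}_j{\mathbf v}_j^*)\|_F\le 1$ (\cref{lem4}) force $\|P({\mathbf u}_j{\mathbf v}_j^*)\|_F\to 1$, hence by the Pythagorean identity
\begin{equation*}
    D_j:=\bigl\|\mathbf A_j-P(\mathbf A_j)\bigr\|_F=\operatorname{dist}_F(\mathbf A_j,\mathcal H)\;\longrightarrow\;0,
    \qquad\text{and in fact}\qquad \sum_{j\ge 0}D_j^2<\infty
\end{equation*}
(the latter by telescoping $\sigma_j^2-\sigma_{j+1}^2\ge\sigma^2\|{\mathbf u}_j{\mathbf v}_j^*-P({\mathbf u}_j{\mathbf v}_j^*)\|_F^2$). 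In particular $\|{\mathbf u}_j{\mathbf v}_j^*-P({\mathbf u}_j{\mathbf v}_j^*)\|_\infty\le D_j/\sigma_j\to 0$.

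The next step is to trap the iterates in a shrinking neighbourhood of $\mathcal H_1$. Applying \cref{lemcad2} to ${\mathbf a}{\mathbf b}^*={\mathbf u}_j{\mathbf v}_j^*$ with $\delta=\|{\mathbf u}_j{\mathbf v}_j^*-P({\mathbf u}_j{\mathbf v}_j^*)\|_\infty$ and rescaling by $\sigma_j$ gives a constant $C'$ depending only on $M,N,\sigma_0,\sigma$ with
\begin{equation*}
    \operatorname{dist}_F(\mathbf A_j,\mathcal H_1)\;<\;C'D_j\;\longrightarrow\;0 .
\end{equation*}
Thus from some index $j_0$ on the iterates lie in an arbitrarily thin tube around the set of nonzero elements of $\mathcal H_1$ --- a smooth two-parameter manifold whose intersection with any annulus $\{\sigma/2\le\|\cdot\|_F\le 2\sigma_0\}$ is compact --- and for $j\ge j_0$ there is a unique nearest point $\hat H_j\in\mathcal H_1$ of $\mathbf A_j$, with $\|\hat H_j\|_F$ bounded away from $0$ and $\infty$.

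The heart of the matter is a \emph{uniform local contraction of the distance to $\mathcal H_1$}. I would linearise the Cadzow step $\Psi:=\Pi_1\circ P$ (with $\Pi_1$ the best rank-$1$ approximation and $P$ the Hankel projection) at $\hat H_j$, a fixed point of $\Psi$ lying in $\mathcal H$. Writing $\mathbf A_j=\hat H_j+{\mathbf n}_j$ with ${\mathbf n}_j\perp T_{\hat H_j}\mathcal H_1$ and $\|{\mathbf n}_j\|_F$ small, the Hankel-projection step gives $P(\mathbf A_j)=\hat H_j+P({\mathbf n}_j)$ with $P({\mathbf n}_j)\in\mathcal H$ and, since $T_{\hat H_j}\mathcal H_1\subseteq\mathcal H$ and $P$ is self-adjoint, still $P({\mathbf n}_j)\perp T_{\hat H_j}\mathcal H_1$. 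As $\hat H_j$ is a smooth point of $\mathcal R_1$ with simple largest singular value, $\Pi_1$ is differentiable there with derivative the orthogonal projection onto $T_{\hat H_j}\mathcal R_1$, so
\begin{equation*}
    \mathbf A_{j+1}=\Psi(\mathbf A_j)=\hat H_j+P_{T_{\hat H_j}\mathcal R_1}P({\mathbf n}_j)+O(\|{\mathbf n}_j\|_F^2),
\end{equation*}
whence $\operatorname{dist}_F(\mathbf A_{j+1},\mathcal H_1)\le\|P_{T_{\hat H_j}\mathcal R_1}P({\mathbf n}_j)\|_F+O(\|{\mathbf n}_j\|_F^2)$. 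Now $P({\mathbf n}_j)$ lies in the subspace $\mathcal H\ominus T_{\hat H_j}\mathcal H_1$, and the norm of $P_{T_{\hat H_j}\mathcal R_1}$ restricted to this subspace is strictly below $1$: equality would produce a nonzero matrix in $\mathcal H\cap T_{\hat H_j}\mathcal R_1=T_{\hat H_j}\mathcal H_1$ that is orthogonal to $T_{\hat H_j}\mathcal H_1$, which is impossible. Since this operator norm $\theta(\hat H_j)$ depends continuously on $\hat H_j$, it is bounded by some $\bar\theta<1$ over the relevant compact piece of $\mathcal H_1$. Absorbing the quadratic remainder once $\operatorname{dist}_F(\mathbf A_j,\mathcal H_1)$ is small enough (guaranteed for $j\ge j_1\ge j_0$ by the previous step) I obtain
\begin{equation*}
    \operatorname{dist}_F(\mathbf A_{j+1},\mathcal H_1)\;\le\;\theta\,\operatorname{dist}_F(\mathbf A_j,\mathcal H_1),\qquad \bar\theta<\theta<1 .
\end{equation*}
Hence $\operatorname{dist}_F(\mathbf A_j,\mathcal H_1)$ decays geometrically, so $\sum_j\operatorname{dist}_F(\mathbf A_j,\mathcal H_1)<\infty$, and since $\|\mathbf A_{j+1}-\mathbf A_j\|_F\le\|\mathbf A_{j+1}-\hat H_j\|_F+\|\hat H_j-\mathbf A_j\|_F\lesssim\operatorname{dist}_F(\mathbf A_j,\mathcal H_1)$, also $\sum_j\|\mathbf A_{j+1}-\mathbf A_j\|_F<\infty$. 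Therefore $(\mathbf A_j)_{j\ge 0}$ is a Cauchy sequence and converges to some $\mathbf A_\infty\in\mathcal H_1$ (a closed set) with $\|\mathbf A_\infty\|_F=\sigma>0$; dividing by $\sigma_j\to\sigma$ gives ${\mathbf u}_j{\mathbf v}_j^*\to{\mathbf u}{\mathbf v}^*:=\mathbf A_\infty/\sigma$, which by \cref{thm:rank-1_hankel} equals ${\mathbf z}_M{\mathbf z}_N^T$ for some $z\in\mathbb C$ or ${\mathbf e}_M{\mathbf e}_N^T$, and $\sigma{\mathbf u}{\mathbf v}^*$ is the advertised rank-$1$ Hankel limit of \cref{alg2}.

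The main obstacle is the contraction estimate of the third paragraph. The singular-value bookkeeping only yields $\sum_j D_j^2<\infty$, which is \emph{not} enough to make the displacements $\|\mathbf A_{j+1}-\mathbf A_j\|_F$ summable, and --- as remarked before \cref{alg2} --- the general convergence theorems for alternating projections onto manifolds do not apply, essentially because $\mathcal H$ and $\mathcal R_1$ share the entire direction $T_{H}\mathcal H_1$ along their intersection. The strictly-less-than-one factor therefore has to be produced by hand, and the crux is the transversality identity $\mathcal H\cap T_{H}\mathcal R_1=T_{H}\mathcal H_1$ for $H\in\mathcal H_1\setminus\{\mathbf 0\}$, i.e.\ that the only infinitesimal rank-$1$ deformations of $H$ preserving the Hankel structure are the tangential ones. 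I expect this, together with the uniformity over the compact part of $\mathcal H_1$ and the control of the quadratic remainder, to be exactly where the preparatory results of this section --- in particular the explicit perturbation bound of \cref{lemcad2} --- do the real work; a fully elementary treatment would replace the tangent-space language above by the corresponding explicit estimates on the coefficients of ${\mathbf u}_j$ and ${\mathbf v}_j$.
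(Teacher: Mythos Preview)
Your overall plan is different from the paper's, and the gap you identify in your final paragraph is real and not closed by \cref{lemcad2}.

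The paper does not establish any contraction $\operatorname{dist}_F(\mathbf A_{j+1},\mathcal H_1)\le\theta\,\operatorname{dist}_F(\mathbf A_j,\mathcal H_1)$ with $\theta<1$. Instead it runs a \emph{trapping} argument: for each $j$ it uses \cref{lemcad2} to pick a rank-$1$ Hankel matrix ${\mathbf z}_M(z_j){\mathbf z}_N(z_j)^T$ with $\|{\mathbf u}_j{\mathbf v}_j^*-{\mathbf z}_M(z_j){\mathbf z}_N(z_j)^T\|_F<C'\delta_j$, and then proves by a direct SVD computation that the \emph{next} iterate---hence, by induction, every later iterate ${\mathbf u}_k{\mathbf v}_k^*$, $k\ge j$---stays inside the \emph{same} ball of radius $C'\delta_j$ around that fixed matrix. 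No factor $\theta<1$ ever appears; the balls shrink only because $\delta_j\to 0$. Convergence of the full sequence then follows because the subsequential limit from \cref{lemcad1} must lie in all of these balls. The technical core is the inequality $\operatorname{Re}\bigl(({\mathbf u}_{j+1}^*{\mathbf z}_M(z_j))\,({\mathbf v}_{j+1}^T{\mathbf z}_N(z_j))\bigr)>1-\tfrac12(C'\delta_j)^2$, extracted from the SVD of $P({\mathbf u}_j{\mathbf v}_j^*)$ via the elementary bounds $1-2\delta_j^2<s_0^2<1-\delta_j^2$ and $\sum_{\ell\ge1}s_\ell^2<\delta_j^2$ on its singular values.

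Your contraction scheme, by contrast, rests on two claims that remain open. First, the clean-intersection identity $\mathcal H\cap T_H\mathcal R_1=T_H\mathcal H_1$ is asserted, not proved; it amounts to saying that every Hankel matrix of the form ${\mathbf z}_M a^T+b\,{\mathbf z}_N^T$ already lies in the span of ${\mathbf z}_M{\mathbf z}_N^T$ and $\partial_z({\mathbf z}_M{\mathbf z}_N^T)$, which needs the (confluent) structure theory of rank-$2$ Hankel matrices and is nowhere supplied. \Cref{lemcad2} does not substitute for this: it bounds $\operatorname{dist}_F(\cdot,\mathcal H_1)$ by $\operatorname{dist}_F(\cdot,\mathcal H)$ on $\mathcal R_1$, but says nothing about the angle between the tangent spaces and does not turn the one-step estimate into a contraction. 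Second, even granting the identity, ``absorbing the quadratic remainder'' is delicate: the Lipschitz constant of the best-rank-$1$ map $\Pi_1$ scales like $(s_0-s_1)^{-1}$, so the $O(\|{\mathbf n}_j\|_F^2)$ term is uniformly controlled only \emph{after} one has established a uniform spectral gap for $P({\mathbf u}_j{\mathbf v}_j^*)$---which is precisely what the paper does by hand with the inequalities on $s_0,s_1$ above. At that stage you would be reproducing the paper's computation in tangent-space language without shortening it; this is also why \cref{remlast} warns that angle-based convergence results from the alternating-projection literature are ``possibly not satisfied'' here.
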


\begin{proof}
As shown in Theorem \ref{lemcad1}, we always have convergence of $(\sigma_{j})_{j=0}^{\infty}$ to a limit $\sigma$, and for $\sigma=0$, Algorithm \ref{alg2} only provides the zero matrix. Further, for  $\sigma>0$, there is a  subsequence $(\mathbf u_{j_{\ell}} \mathbf v_{j_{\ell}}^{*})_{\ell =0}^{\infty}$ that converges to ${\mathbf u}{\mathbf v}^{*}= {\mathbf z}_{M} {\mathbf z}_{N}^{T}$ (or ${\mathbf e}_{M} {\mathbf e}_{N}^{T}$),  i.e., $\sigma \, {\mathbf u} {\mathbf v}^{*}$ is a rank-1 Hankel matrix. 
We show that for $\sigma>0$ the full sequence $({\mathbf u}_{j} {\mathbf v}_{j}^{*})_{j=0}^{\infty}$  converges to ${\mathbf u} {\mathbf v}^{*}$.

1.\ 
For each $j \in {\mathbb N}$, we can apply the projection onto the subspace  of Hankel matrices as in the proof of Lemma \ref{lem4}, 
\begin{equation}\label{neu} \|P({\mathbf u}_{j} {\mathbf v}_{j}^{*})\|_{F} = \|{\mathbf P} \, \vect({\mathbf u}_{j} {\mathbf v}_{j}^{*}) \|_{2} =  \|{\mathbf P} \, ({\mathbf w}_{j}^{(0)} \oplus {\mathbf w}_{j}^{(1)})\|_{2} =  \|{\mathbf P} \, ({\mathbf w}_{j}^{(1)}) \|_{2} = \|{\mathbf w}_{j}^{(1)} \|_{2},
\end{equation}
where $\vect({\mathbf u}_{j} {\mathbf v}_{j}^{*})={\mathbf w}_{j}^{(1)} \oplus {\mathbf w}_{j}^{(0)}$ with ${\mathbf P} {\mathbf w}_{j}^{(1)} = {\mathbf w}_{j}^{(1)}$, 
${\mathbf P} {\mathbf w}_{j}^{(0)} = {\mathbf 0}$, $({\mathbf w}_{j}^{(0)})^{*} {\mathbf w}_{j}^{(1)} = 0$.
Let 
$$\delta_{j}  \coloneqq  \|{\mathbf u}_{j} {\mathbf v}_{j}^{*} - P({\mathbf u}_{j}{\mathbf v}_{j}^{*}) \|_{F} = \|{\mathbf w}_{j}^{(0)} \|_{2}.$$ 
Then
\begin{equation}\label{1z} \|P({\mathbf u}_{j} {\mathbf v}_{j}^{*})\|_{F}^{2} = \|{\mathbf u}_{j} {\mathbf v}_{j}^{*}\|_{F}^{2} -\delta_{j}^{2} = 1 - \delta_{j}^{2}
\end{equation}
and $\lim\limits_{j\to \infty} \delta_{j} =0$ and is monotonically decaying, since by (\ref{P1}), (\ref{sigma}), and (\ref{sigma1}), 
$$ \lim_{j \to \infty} \frac{\sigma_{j+1}}{\sigma_{j}} = \lim_{j \to \infty} \frac{\|P({\mathbf u}_{j} {\mathbf v}_{j}^{*}) \|_{2}}{\|{\mathbf u}_{j} {\mathbf v}_{j}^{*}\|_{2}} = \lim_{j \to \infty} 
\frac{\|P({\mathbf u}_{j} {\mathbf v}_{j}^{*}) \|_{F}}{\|{\mathbf u}_{j} {\mathbf v}_{j}^{*}\|_{F}}
= \lim_{j \to \infty} \|P({\mathbf u}_{j} {\mathbf v}_{j}^{*}) \|_{F} = 1.$$

2.\ Consider now the singular value decomposition $P({\mathbf u}_{j} {\mathbf v}_{j}^{*}) = {\mathbf U}_{j} {\mathbf D}_{j} \, {\mathbf V}_{j}^*$ with 
matrices ${\mathbf U}_{j} \in {\mathbb C}^{M \times M}$, ${\mathbf V}_{j} \in {\mathbb C}^{N \times M}$ 
satisfying ${\mathbf U}_{j}^{*} {\mathbf U}_{j} = {\mathbf V}_{j}^{*} {\mathbf V}_{j} = {\mathbf I}_{M}$ and ${\mathbf D}_{j} = \diag(s_{0}, \, s_{1}, \ldots , s_{M-1})$, where $s_{0} \ge s_{1} \ge s_{2} \ldots \ge s_{M-1}$. Note that $s_{0} = \frac{\sigma_{j+1}}{\sigma_{j}}$.
Observe that the iteration vectors ${\mathbf u}_{j+1}$ and ${\mathbf v}_{j+1}$ in the Cadzow iteration are the first columns of ${\mathbf U}_{j}$ and ${\mathbf V}_{j}$, respectively.\\
Formula (\ref{1z}) implies that $\sum\limits_{\ell=0}^{M-1} s_{\ell}^{2} = 1 - \delta_{j}^{2}$, while  the  Eckart-Young Mirsky theorem yields that  
\begin{equation}\label{neu1z}
 \sum_{\ell=1}^{M-1} s_{\ell}^2\ = \|P({\mathbf u}_{j} {\mathbf v}_{j}^{*}) - s_{0} {\mathbf u}_{j+1} {\mathbf v}_{j+1}^{*} \|_{F}^{2} < \|P({\mathbf u}_{j} {\mathbf v}_{j}^{*}) -  {\mathbf u}_{j} {\mathbf v}_{j}^{*} \|_{F}^{2} = \delta_{j}^{2},
\end{equation} 
and therefore
\begin{equation}\label{d00} 1 - 2\delta_{j}^{2} <  s_{0}^{2} < 1 -  \delta_{j}^{2}. \end{equation}
Thus, there exists $j_{0} \in {\mathbb N}$  such that for all $j> j_{0}$, the value $\sigma_{j} < \sigma_{j_{0}}$ is small enough to ensure that  $s_{0}$ is the unique largest singular value of $P({\mathbf u}_{j} {\mathbf v}_{j}^{*})$.

3.\ By Lemma \ref{lemcad2}, there exist $c_{j} \in {\mathbb C}$ and $z_{j} \in \overline{\mathbb C}$ such that 
\begin{equation*}
     \left\| {\mathbf u}_{j} {\mathbf v}_{j}^{*} -  c_{j} \, {\mathbf z}_{M}(z_{j}) \, {\mathbf z}_{N}(z_{j})^{T} \right\|_{F}
    < C \delta_{j},
\end{equation*}
where $C$ only depends on the dimensions $M$ and $N$. 
Since $\|{\mathbf u}_{j} {\mathbf v}_{j}^{*}\|_{F} = \|{\mathbf z}_{M}(z_{j}) \, {\mathbf z}_{N}(z_{j})^{T}\|_{F} = 1$, we obtain  $|1 - |c|| < C \delta_{j}$, and  it follows that the $z_{j} \in \overline{\mathbb C}$ chosen above satisfies
$$ \|  {\mathbf u}_{j} {\mathbf v}_{j}^{*} -  {\mathbf z}_{M}(z_{j}) \, {\mathbf z}_{N}(z_{j})^{T} \|_{F} < C' \delta_{j}, $$
where $C' = 2C$.
We assume that $\delta_{j} < \frac{1}{2C'^{2}}$ and  show that all further iteration matrices ${\mathbf u}_{k} {\mathbf v}_{k}^{*}$, $k \ge j$ of Algorithm \ref{alg2} also satisfy the condition $\|  {\mathbf u}_{k} {\mathbf v}_{k}^{*} -  {\mathbf z}_{M}(z_{j}) {\mathbf z}_{N}(z_{j})^{T} \|_{F} < C' \,  \delta_{j}$, i.e., all ${\mathbf u}_{k} {\mathbf v}_{k}^{*}$ are in the ball of radius $C' \delta_{j}$ around ${\mathbf z}_{M}(z_{j}) \, {\mathbf z}_{N}(z_{j})^{T}$.

First, we conclude from ${\mathbf P} \vect({\mathbf z}_{M}(z_{j}) {\mathbf z}_{N}(z_{j})^{T}) = \vect({\mathbf z}_{M}(z_{j}) {\mathbf z}_{N}(z_{j})^{T})$ that 
\begin{align*}
& \| {\mathbf u}_{j} {\mathbf v}_{j}^{*} - {\mathbf z}_{M}(z_{j}){\mathbf z}_{N}(z_{j})^{T} \|_{F}^{2} = \|\vect({\mathbf u}_{j} {\mathbf v}_{j}^{*}) - \vect({\mathbf z}_{M}(z_{j}) {\mathbf z}_{N}(z_{j})^{T})\|_{2}^{2} \\
&= \| {\mathbf P}(({\mathbf w}_{j}^{(1)})- \vect({\mathbf z}_{M}(z_{j}) {\mathbf z}_{N}(z_{j})^{T}) )\oplus  {\mathbf w}_{j}^{(0)} \|_{2}^{2} \\ 
&= \| {\mathbf P}({\mathbf w}_{j}^{(1)}) - \vect({\mathbf z}_{M}(z_{j}) {\mathbf z}_{N}(z_{j})^{T})\|_{2}^{2} + \|{\mathbf w}_{j}^{(0)} \|_{2}^{2} 
= \| P({\mathbf u}_{j} {\mathbf v}_{j}^{*}) - {\mathbf z}_{M}(z_{j}) {\mathbf z}_{N}(z_{j})^{T}\|_{F}^{2} + \delta_{j}^{2},
\end{align*}
i.e., 
\begin{equation}\label{ab1z}
\| P({\mathbf u}_{j} {\mathbf v}_{j}^{*}) - {\mathbf z}_{M}(z_{j}) {\mathbf z}_{N}(z_{j})^{T}\|_{F}^{2} <  (C' \delta_{j})^{2} -  \delta_{j}^{2}.
\end{equation}

It is sufficient to show that $\|  {\mathbf u}_{j+1} {\mathbf v}_{j+1}^{*} -  {\mathbf z}_{M}(z_{j}) {\mathbf z}_{N}(z_{j})^{T} \|_{F} < C' \,  \delta_{j}$, then the argument can be repeated for $k > j+1$.
Observe that 
\begin{align*}
& \|{\mathbf u}_{j} {\mathbf v}_{j}^{*} - {\mathbf z}_{M}(z_{j}) {\mathbf z}_{N}(z_{j})^{T}\|_{F}^{2}  = \textrm{trace} ( ({\mathbf u}_{j} {\mathbf v}_{j}^{*} - 
{\mathbf z}_{M}(z_{j}) {\mathbf z}_{N}(z_{j})^{T})^{*} ({\mathbf u}_{j} {\mathbf v}_{j}^{*} - {\mathbf z}_{M}(z_{j}) {\mathbf z}_{N}(z_{j})^{T})) \\
&= \textrm{trace} \Big( {\mathbf v}_{j} {\mathbf v}_{j}^{*} + \overline{{\mathbf z}_{N}(z_{j})} {\mathbf z}_{N}(z_{j})^{T} - ({\mathbf u}_{j}^{*} {\mathbf z}_{M}(z_{j})) {\mathbf v}_{j} {\mathbf z}_{N}(z_{j})^{T}
- ({\mathbf z}_{M}(z_{j})^{*} {\mathbf u}_{j}) \overline{{\mathbf z}_{N}(z_{j})} {\mathbf v}_{j}^{*} \Big) \\
&= 2 - 2 \, \textrm{Re}(({\mathbf u}_{j}^{*} {\mathbf z}_{M}(z_{j})) ({\mathbf v}_{j}^{T} {\mathbf z}_{N}(z_{j}))).
\end{align*}
Thus, $\|{\mathbf u}_{j} {\mathbf v}_{j}^{*} - {\mathbf z}_{M}(z_{j}) {\mathbf z}_{N}(z_{j})^{T}\|_{F}^{2} < (C' \delta_{j})^{2}$ is equivalent to $\textrm{Re}(({\mathbf u}_{j}^{*} {\mathbf z}_{M}(z_{j})) ({\mathbf v}_{j}^{T} {\mathbf z}_{N}(z_{j}))) > 1 - \frac{(C' \delta_{j})^{2}}{2}$,
and it is sufficient to show that  $\textrm{Re}(({\mathbf u}_{j+1}^{*} {\mathbf z}_{M}(z_{j})) ({\mathbf v}_{j+1}^{T} {\mathbf z}_{N}(z_{j}))) > 1 - \frac{(C' \delta_{j})^{2}}{2}$.

Using (\ref{neu1z}), (\ref{ab1z})  and the SVD matrices ${\mathbf U}_{j}$ and ${\mathbf V}_{j}$ of $P({\mathbf u}_{j} {\mathbf v}_{j}^{*})$ we obtain  with $\alphabf(z_{j}) \coloneqq {\mathbf U}_{j}^{*} {\mathbf z}_{M}(z_{j})$ and $\betabf(z_{j}) \coloneqq {\mathbf V}_j^{*} \overline{\mathbf z}_{N}(z_{j)}$
\begin{align*}
\|P({\mathbf u}_{j} {\mathbf v}_{j}^{*}) - {\mathbf z}_{M}(z_{j}) {\mathbf z}_{N}(z_{j})^{T}\|_{F}^{2} &= \| {\mathbf D}_{j} - \alphabf(z) \betabf(z)^{*} \|_{F}^{2} \\
&= 1 + \sum_{\ell=0}^{M-1} s_{\ell}^{2} - 2 \sum_{\ell=0}^{M-1} s_{\ell} \, \mathrm{Re}( \alpha_{\ell}(z_{j}) \overline{\beta_{\ell}(z_{j})}) < (C' \delta_{j})^{2} - \delta_{j}^{2}.
\end{align*}
Thus we have 
\begin{equation}\label{hahaz} 
\sum\limits_{\ell=0}^{M-1} s_{\ell} \, \mathrm{Re}( \alpha_{\ell}(z_{j}) \overline{\beta_{\ell}(z_{j})}) > 1 - \frac{(C' \delta_{j})^{2}}{2},
\end{equation}
and we need to show $\mathrm{Re}( \alpha_{0}(z_{j}) \overline{\beta_{0}(z_{j})}) \ge 1 - \frac{(C' \delta_{j})^{2}}{2}$ since $\alpha_{0}(z_{j}) = {\mathbf u}_{j+1}^{*} {\mathbf z}_{M}(z_{j})$ and ${\mathbf \beta}_{0}(z_{j}) = {\mathbf v}_{j+1}^{*} \overline{\mathbf z}_{N}(z_{j})$.

By (\ref{neu1z}) and $\Big|\sum\limits_{\ell=0}^{M-1} \textrm{Re}(\alpha_{\ell}(z_{j}) \overline{\beta_{\ell}(z_{j})})\Big| \le \sum\limits_{\ell=0}^{M-1} \Big|\alpha_{\ell}(z_{j}) \overline{\beta_{\ell}(z_{j})}\Big|
< \| {\alphabf}(z_{j}) \|_{2} \| \betabf(z_{j}) \|_{2} = 1$, we find
\begin{equation*}
    \sum_{\ell=1}^{M-1} s_{\ell} \, \textrm{Re}( \alpha_{\ell}(z_{j}) \overline{\beta_{\ell}(z_{j})}) < s_{1} \, \sum_{\ell=1}^{M-1} | \alpha_{\ell}(z_{j}) \overline{\beta_{\ell}(z_{j})}| <  \delta_{j} \,  (1- \textrm{Re}( \alpha_{0}(z_{j}) \overline{\beta_{0}(z_{j})})).
\end{equation*}
Thus,  (\ref{hahaz}) implies  that 
\begin{equation*}
    s_{0} \, \mathrm{Re}( \alpha_{0}(z_{j}) \overline{\beta_{0}(z_{j})}) + \delta_{j}(1-\mathrm{Re}( \alpha_{0}(z_{j}) \overline{\beta_{0}(z_{j})})) >  \sum_{\ell=0}^{M-1} s_{\ell} \, \mathrm{Re}( \alpha_{\ell}(z_{j}) \overline{\beta_{\ell}(z_{j})}) > 1 - \frac{(C'\delta_{j})^{2}}{2},
\end{equation*}
and finally 
\begin{equation*}
    \textrm{Re}( \alpha_{0}(z_{j}) \overline{\beta_{0}(z_{j})}) > \frac{1 -\frac{(C'\delta_{j})^{2}}{2} - \delta_{j}}{s_0 - \delta_{j}} > 
    \frac{1 -\frac{(C' \delta_{j})^{2}}{2} - \delta_{j}}{1- \frac{\delta_{j}^{2}}{2}- \delta_{j}} > 1 - \frac{(C' \delta_{j})^{2}}{2}
\end{equation*}
where we have used that $s_{0}< \sqrt{1-\delta_{j}^{2}} < 1- \frac{\delta_{j}^{2}}{2}$ and $\delta_{j} < \frac{1}{2C'^{2}}$. 
This shows $\|{\mathbf u}_{j+1} {\mathbf v}_{j+1}^{*} - {\mathbf z}_{M}(z_{j}){\mathbf z}_{N}(z_{j})^{T}\|_{F} \le C'\delta_{j}$.

4.\ Finally, we conclude that the limit ${\mathbf u}{\mathbf v}^{*}$ of the subsequence ${\mathbf u}_{j_{\ell}} {\mathbf v}_{j_{\ell}}^{*}$ also has to be in this ball around ${\mathbf z}_{M}(z_{j}){\mathbf z}_{N}(z_{j})^{T}$ with radius $C'\delta_{j}$ for any $j$, and thus
$$\| {\mathbf u}_{j} {\mathbf v}_{j}^{*}  - {\mathbf u} {\mathbf v}^{*} \|_{F} \le 2C'\delta_{j}.$$
Since $\lim\limits_{j\to \infty} \delta_{j} =0$ we therefore have convergence of ${\mathbf u}_{j} {\mathbf v}_{j}^{*}$ to ${\mathbf u}{\mathbf v}^{*}$.
\end{proof}

\begin{remark}\label{remlast}
Note that the results of \cite{Lewis} cannot be applied to show convergence of the Cadzow algorithm since the considered manifolds do not satisfy the transversality condition, which is necessary in those proofs.
In \cite{Andersson13}, the condition of transversality is relaxed and replaced by the weaker condition of existence of nontangential intersection points. 
However the convergence results in \cite{Andersson13} rely on the assumption  that the angle $\alpha$ between the considered manifolds is bounded away from $0$, or equivalently, that the value $\sigma({\mathbf A})$ in Definition 3.1 of \cite{Andersson13} is smaller than $1$, which is not simple to show in the considered setting, and possibly not satisfied.
\end{remark}

\begin{example} \label{Cadex}
We show in a special example that Cadzow's algorithm for rank-1 Hankel approximation may indeed converge to the zero matrix. We consider the matrix 
\begin{equation*}
    {\mathbf A} \coloneqq \begin{pmatrix}
                            1 & 0 & 1/2 \\
                            0 & 1/2 & 0 \\
                            1/2 & 0 & 1
                        \end{pmatrix}
\end{equation*}
with eigenvalues $\frac{3}{2}$, $\frac{1}{2}$, $\frac{1}{2}$.
The singular vector to the largest singular value $\frac{3}{2}$  is of the form 
   $ {\mathbf u}_{0} = {\mathbf v}_{0} = \frac{1}{\sqrt{2}} \, (1, \, 0, \, 1)^{T}$.
Thus we find
\begin{equation*}
    P\left({\mathbf u}_{0} \, {\mathbf v}_{0}^{*}\right)
    = \frac{1}{2} 
        \begin{pmatrix}
            1 & 0 & 2/3 \\
            0  & 2/3 & 0 \\
            2/3 & 0 & 1
        \end{pmatrix}
    = \begin{pmatrix}
        1/2 & 0 & 1/3 \\
        0 & 1/3 & 0 \\
        1/3 & 0 & 1/2
        \end{pmatrix}.
\end{equation*}
Now,  ${\mathbf u}_{1} = {\mathbf v}_{1} = \frac{1}{\sqrt{2}} ( 1 , \, 0 , \, 1)^{T}$ is the singular vector of $P({\mathbf u}_0 \, {\mathbf v}_0^{*})$ to the largest singular value $5/6$. Further iterations yield 
\begin{equation*}
    {\mathbf u}_{j} = {\mathbf v}_{j} = \frac{1}{\sqrt{2}} ( 1 , \, 0 , \, 1)^{T}, \qquad \sigma_{j} = \frac{3}{2} \cdot \left( \frac{5}{6} \right)^{j} .
\end{equation*}
Obviously, $({\mathbf u}_{j})_{j=0}^{\infty}$ and  $({\mathbf v}_{j})_{j=0}^{\infty}$ are constant sequences with limit vector $\frac{1}{\sqrt{2}} ( 1 , \, 0 , \, 1)^{T}$, and $\lim_{j \to \infty} \sigma_{j} = 0$. In other words, the Cadzow algorithm fails to converge to a rank-1 matrix.\\
For comparison, \cref{theo1} provides the optimal rank-1 Hankel approximation with regard to the Frobenius norm $\tilde{c} \, \tilde{\mathbf z} \, \tilde{\mathbf z}^{T}$ with 
\begin{equation*}
    \tilde{z} = \mathop{\mathrm{argmax}}_{z \in {\mathbb C} } |{\mathbf z}^{T} \, {\mathbf A} \,  {\mathbf z}|^{2}
    = \mathop{\mathrm{argmax}}_{z \in {\mathbb C} } \frac{1 + \frac{3}{2} z^{2} + z^{4}}{1 + z^{2} + z^{4}} = \mathop{\mathrm{argmax}}_{z \in {\mathbb C} } \frac{z^{2}}{1+z^{2}+ z^{4}}.
\end{equation*}
We obtain the two solutions $\tilde{z}=1$ and $\tilde{z} = -1$. For both, $\tilde{z}=1$ and $\tilde{z}=-1$, we find $\tilde{c} = \mathbf{\tilde{z}}^T \mathbf{A} \, \mathbf{\tilde{z}} = \frac{7}{6}$. Thus, we get indeed two optimal solutions, namely
\begin{equation*}
    \frac{7}{18} \begin{pmatrix} 1 & 1 & 1 \\ 1 & 1 & 1 \\ 1 & 1 & 1 \end{pmatrix} \qquad \text{and} \qquad 
    \frac{7}{18} \begin{pmatrix} 1 & -1 & 1 \\ -1 & 1 & -1 \\ 1 & -1 & 1 \end{pmatrix}.
\end{equation*}
Both solutions possess the error
\begin{equation*}
    \|{\mathbf A} - \tilde{c} \, \tilde{\mathbf z}\, \tilde{\mathbf z}^{T} \|_{F}
    = \left\| \frac{1}{18}
    \begin{pmatrix}  
        11 & \pm 7 & 2 \\
        \pm 7 & 2 & \pm 7 \\
        2 & \pm 7 & 11
    \end{pmatrix} \right\|_{F}=  \frac{\sqrt{450}}{18} = 1.178511.
\end{equation*}
The spectral norm for this error matrix is $\|{\mathbf A} - \tilde{c} \, \tilde{\mathbf z} \, \tilde{\mathbf z}^{T}\|_{2} = 1.045820$. \\
Finally, let us consider the optimal rank-1 Hankel approximation of ${\mathbf A}$ with respect to the spectral norm.
We observe that the eigenvectors of ${\mathbf A}$ corresponding to $\frac{3}{2}$, $\frac{1}{2}$, $\frac{1}{2}$ are 
\begin{equation*}
    {\mathbf v}_{0} = \frac{1}{\sqrt{2}} ( 1 , \, 0 , \, 1)^{T}, \quad 
    {\mathbf v}_{1} =  ( 0 , \, 1 , \, 0)^{T}, \quad 
    {\mathbf v}_{2} = \frac{1}{\sqrt{2}} ( 1 , \, 0 , \, -1)^{T}.
\end{equation*}
The optimal error $\tilde{\lambda}$ is in the interval $[\frac{1}{2}, \frac{3}{2})$. Since $v_{1}(z) = {\mathbf v}_{1}^{T} {\mathbf z}$ and $v_{2}(z) = {\mathbf v}_{2}^{T} {\mathbf z}$ have no common zeros, 
we obtain $\Sigma= \emptyset$ in \cref{alg1}. We need to find $\tilde{\lambda}^{2}$ and $\tilde{z}$,  such that $f(\tilde{z}, \tilde{\lambda}^{2})$ satisfies \cref{spectral2}, i.e.,
$\max_{z \in {\mathbb R}} f_{\tilde{\lambda}}(z) =0$ and $\tilde{z} \in \mathop{\mathrm{argmax}}_{z \in {\mathbb R}} f_{\tilde{\lambda}}(z)$. We obtain
$$  f_{\lambda}(z)
    = \frac{({\mathbf v}_{0}^{T} {\mathbf z})^{2}}{\frac{9}{4} - \lambda^{2}} + \frac{({\mathbf v}_{1}^{T} {\mathbf z})^{2}}{\frac{1}{4} - \lambda^{2}} + \frac{({\mathbf v}_{2}^{T} {\mathbf z})^{2}}{\frac{1}{4} - \lambda^{2}}, 
$$
i.e.,    
\begin{align*}
 (1 + z^{2} + z^{4})    f_{\lambda}(z)
    &= \frac{\frac{1}{2} + z^{2} + \frac{z^{4}}{2}}{\frac{9}{4}- \lambda^{2}} + \frac{z^{2}}{\frac{1}{4} - \lambda^{2}} + \frac{\frac{1}{2} - z^{2} + \frac{z^{4}}{2}}{\frac{1}{4}- \lambda^{2}} \\
    &= \frac{1}{\left(\frac{9}{4}-\lambda^{2}\right)\left(\frac{1}{4}-\lambda^{2}\right)} \left( \left(\frac{5}{4}- \lambda^{2}\right) z^{4} + \left(\frac{1}{4} - \lambda^{2}\right) z^{2} + \left(\frac{5}{4}- \lambda^{2}\right) \right) \\
    &= \frac{\frac{5}{4} - \lambda^{2}}{\left(\frac{9}{4}-\lambda^{2}\right)\left(\frac{1}{4}-\lambda^{2}\right)}
    \left( \!\!\left( z^{2}- \left( \frac{\lambda^{2} -\frac{1}{4}}{2\left(\frac{5}{4}-\lambda^{2}\right)}\right)\!\!\right)^{2} \!\!+
   \!\! 1 - \!\!\left( \frac{\lambda^{2} -\frac{1}{4}}{2\left(\frac{5}{4}-\lambda^{2}\right)}\right)^{2} \right), 
\end{align*}
where we assume in the last line that $\lambda^2 \neq \frac{5}{4}$.
A direct inspection of $f_{\lambda}(z)$ provides that $\max_z f_{\tilde{\lambda}} (z) =0$ if and only if 
\begin{equation*}
    1 - \left( \frac{\tilde{\lambda}^{2}-\frac{1}{4}}{2\left(\frac{5}{4}-\tilde{\lambda}^{2}\right)}\right)^{2} =0,
\end{equation*}
i.e., if $\tilde{\lambda}^{2} = \frac{11}{12}$. We thus obtain from \cref{spectral2} and \cref{spectral3}
\begin{equation*}
    \tilde{z}^{2} =  \frac{\tilde{\lambda}^{2}-\frac{1}{4}}{2\left(\frac{5}{4}-\tilde{\lambda}^{2}\right)} = \frac{\frac{11}{12}-\frac{1}{4}}{2\left(\frac{5}{4} -\frac{11}{12}\right)} = 1, \qquad \tilde{c}= \left(\frac{\frac{2}{3}}{\frac{3}{2}-\sqrt{\frac{11}{12}}} + \frac{\frac{1}{3}}{\frac{1}{2}-\sqrt{\frac{11}{12}}} + 0\right)^{-1} = 2,
\end{equation*}
and therefore again the two solutions $\tilde{z}=1$ and $\tilde{z} = -1$.
For the obtained error matrix we have
\begin{equation*}
    \left\| {\mathbf A} - \frac{2}{3} \begin{pmatrix}
                                        1 & 1 & 1 \\
                                        1 & 1 & 1 \\
                                        1 & 1 & 1
                                    \end{pmatrix} \right\|_{2}
    =  \left\| \frac{1}{6} \begin{pmatrix}
                                2 & \pm 4 & -1 \\
                                \pm 4 & -1 & \pm 4 \\
                                -1 &  \pm 4 & 2
                            \end{pmatrix} \right\|_{2} = \sqrt{\frac{11}{12}} = 0.957427,
\end{equation*}
while for the Frobenius norm we get $\|{\mathbf A} - \tilde{c} \, \tilde{\mathbf z} \, \tilde{\mathbf z}^{T}\|_{F} = 1.443376$. 
By construction, the error matrix $ {\mathbf A} - \tilde{c} \, \tilde{\mathbf z} \, \tilde{\mathbf z}^{T}$ possesses the eigenvalues $\sqrt{\frac{11}{12}}$, $-\sqrt{\frac{11}{12}}$, and $\frac{1}{2}$.
\end{example}

\begin{example} \label{ex2}
Finally, we consider Example 5 in \cite{Gillard11}. Given the matrix 
\begin{equation*}
    {\mathbf A}_{a} = \begin{pmatrix} a & 1 & a & 1 & a \\ 1 & a & 1 & a & 1 \end{pmatrix}^{T},
\end{equation*}
we obtain for $a=0$ with the Cadzow algorithm and with the optimal Frobenius approximation in Section 3, respectively, 
\begin{align*}
    {\mathbf H}_{Cadzow} &= \begin{pmatrix} 0 & 0 & 0 & 0 & 0 \\ 0 & 0 & 0 & 0 & 1 \end{pmatrix}^{T},\\
    {\mathbf H}_{Frob} &= \begin{pmatrix} \pm 0.4469 & 0.4670 &  \pm 0.4881 & 0.5101 & \pm 0.5331 \\
    0.4670 &  \pm 0.4881 & 0.5101 & \pm 0.5331 & 0.5571 \end{pmatrix}^{T}. 
\end{align*}
For the Frobenius norm, we find two optimal solutions, $(\tilde{z}, \tilde{c}) = (1.045082,  \, 0.446855)$, and 
$(\tilde{z}, \tilde{c}) = (-1.045082, \, -0.446855)$, producing the same optimal error. We obtain 
\begin{equation*}
    \|{\mathbf A} - \tilde{c} \, \tilde{\mathbf z} \, \tilde{\mathbf z}^{T}\|_{F} = 1.577594, \qquad \|{\mathbf A} - {\mathbf H}_{Cadzow}\|_{F} = 2.
\end{equation*}
Note that the two algorithms HSVD and HTLS studied for comparison in \cite{Gillard11}, completely fail in this case.
For $a=2$ we get
\begin{align*}
    {\mathbf H}_{Cadzow} &= \begin{pmatrix}
                                1.5629 & 1.5369 & 1.5113 & 1.4861 & 1.4614 \\
                                1.5369 & 1.5113 & 1.4861 & 1.4614 & 1.4370
                            \end{pmatrix}^{T},\\
    {\mathbf H}_{Frob} &= \begin{pmatrix}
                            1.5563 & 1.5334 &  1.5108 & 1.4885 & 1.4666 \\
                            1.5334 &  1.5108 & 1.4885 & 1.4666 & 1.4450
                          \end{pmatrix}^{T}. 
\end{align*}
For the Frobenius norm, we have the solution $(\tilde{z}, \tilde{c}) = (0.985274, \, 1.556291)$.
We obtain 
\begin{equation*}
    \|{\mathbf A} - \tilde{c} \, \tilde{\mathbf z} \, \tilde{\mathbf z}^{T}\|_{F} = 1.577618,  \qquad \|{\mathbf A} - {\mathbf H}_{Cadzow}\|_{F} = 1.577681.
\end{equation*}
While for  $a=0$, Cadzows algorithms provides a solution error which is significantly larger than the optimal error, we get for $a=2$ an error which is almost optimal.
\end{example}

\section*{Conclusion and Outlook}

In \cref{sec:hankel_mat} we showed that a rank-1 Hankel matrix $\mathbf{H}_1$ is always of the form
$  \mathbf{H}_1 = c \, {\mathbf z}_{M} \, {\mathbf z}_{N}^{T}$ 
or $\mathbf{H}_1 = c \, {\mathbf e}_{M} \, {\mathbf e}_{N}^{T}$
with ${\mathbf z}_{N}$ and ${\mathbf e}_{N}$ defined in \cref{z} and \cref{en}. This observation enabled us to analytically solve
\begin{equation*}
  \min_{\mathbf{H}_{1}\in\mathbb{C}^{M\times N}} \left\|{\mathbf A}- {\mathbf H}_{1}\right\|^2_F \quad \text{and}  \quad \min_{\mathbf{H}_{1}\in\mathbb{C}^{M\times N}} \left\|{\mathbf A}-  \mathbf{H}_{1}\right\|^{2}_{2}.
\end{equation*}
In the case of the Frobenius norm our results apply to general matrices ${\mathbf A} \in \mathbb{C}^{M\times N}$. For the spectral norm we considered real symmetric matrices. Our theoretical results gave rise to algorithms to compute the optimal rank-1 Hankel approximations for the Frobenius and spectral norm. In particular,  the optimal solutions for the two norms usually differ. This is in contrast to well-known results for unstructured optimal low-rank approximations.\\
We showed that the well-known Cadzow algorithm applied for rank-1 Hankel approximation always converges to a fixed point. However, it can happen that the algorithm converges to the zero matrix. Even if Cadzow's method converges to a rank-1 Hankel matrix it usually does not converge to the optimal solution, neither with respect to the Frobenius norm nor with respect to the spectral norm. We conjecture that the fixed point reached by the Cadzow algorithm coincides with the optimal rank-1 Hankel approximation with respect to the Frobenius or spectral norm only in the trivial case, if the unstructured rank-1 approximation obtained by the singular value decomposition already has the wanted Hankel structure. In this case, Cadzow's algorithm stops already after one iteration step. \\
A natural extension of our results would be to ask for analytic solutions to the approximation problem for  Hankel matrices with rank $r >1$. However, due to an increasing number of special cases regarding the structure of higher-rank Hankel matrices, this problem is much more difficult to solve.
For the Frobenius norm, we will consider applying our algorithm iteratively in order to get a Hankel approximation of higher rank and study the obtained results in comparison to other numerical methods for low-rank Hankel approximation.

\subsection*{Acknowledgement}
The authors would like to thank Ingeborg Keller for helpful remarks to improve this manuscript. The authors owe profound thanks to an anonymous referee who pointed out several possibilities to considerably increase the quality of this work.
Support by the German Research Foundation in the framework of the RTG 2088 is gratefully acknowledged.

\small{
\bibliography{myBib}{}
\bibliographystyle{plain}
}

\end{document}